\newtheorem{definition}{Definition}[section]
\newtheorem{theorem}[definition]{Theorem}
\newtheorem{proposition}[definition]{Proposition}
\newtheorem{lemma}[definition]{Lemma}
\newtheorem{corollary}[definition]{Corollary}
\newtheorem{remark}[definition]{Remark}
\newtheorem{notation}[definition]{Notation}
\newtheorem{assumption}[definition]{Assumption}
\newcommand{\dl}{\ensuremath{\mathrm{d}}}
\newcommand{\Id}{\ensuremath{\mathrm{Id}}}
\renewcommand{\geq}{\ensuremath{\geqslant}}
\renewcommand{\leq}{\ensuremath{\leqslant}}
\newcommand{\lm}{\ensuremath{\kappa}} 
\newcommand{\Ceta}{\ensuremath{C_{\eta}}} 
\newcommand{\Clm}{\ensuremath{C_{\kappa}}} 
\newcommand{\lambdarem}{\hat\lambda^{\mathrm{re}}} 
\newcommand{\Lambdarem}{\hat\Lambda^{\mathrm{re}}} 
\newcommand{\lambdare}{\lambda^{\mathrm{re}}} 
\newcommand{\Lambdare}{\Lambda^{\mathrm{re}}} 
\newcommand{\drift}{\mathcal A} 
\newcommand{\diff}{\mathcal B} 
\newcommand{\bE}{{\mathbb E}}
\newcommand{\bN}{{\mathbb N}}
\newcommand{\bP}{{\mathbb P}}
\newcommand{\bR}{{\mathbb R}}
\newcommand{\cF}{{\mathcal F}}
\newcommand{\cD}{{\mathcal D}}
\newcommand{\cM}{{\mathcal M}}
\newcommand{\cO}{{\mathcal O}}
\newcommand{\sL}{{\mathscr L}}
\newcommand{\bbullet}{\small{\, \mathop{\mbox{\tiny$^{_\bullet}$}}\, }}
\newcommand{\dist}{{\operatorname{dist}}}
\begin{document}

\markboth{F. Lindner and H. Stroot}{Strong approximation of stochastic mechanical systems with holonomic constraints}

\title[Strong convergence of a half-explicit scheme]{Strong convergence of a half-explicit Euler scheme for constrained stochastic mechanical systems}

\author{Felix Lindner}
\address{Felix Lindner, Technische Universit\"at Kaiserslautern, Fachbereich Mathematik, 67653 Kaiserslautern, Germany\\ lindner@mathematik.uni-kl.de}
\author{Holger Stroot}
\address{Holger Stroot, Fraunhofer-Institut f\"ur Techno- und Wirtschaftsmathematik, Fraunhofer Platz 1, 67663 Kaiserslautern, Germany\\ holger.stroot@itwm.fraunhofer.de}

\begin{abstract}
This paper is concerned with the numerical approximation of stochastic mechanical systems with nonlinear holonomic constraints.
Such systems are described by second order stochastic differential-algebraic equations involving an implicitly given Lagrange multiplier process.
The explicit representation of the Lagrange multiplier leads to an underlying stochastic ordinary differential equation, the drift coefficient of which is typically not globally one-sided Lipschitz continuous. 
We investigate a half-explicit drift-truncated Euler scheme which fulfills the constraint exactly. 
Pathwise uniform $L_p$-convergence is established.
The proof is based on a suitable decomposition of the discrete Lagrange multipliers and on norm estimates for the single components, enabling the verification of consistency, semi-stability and moment growth properties of the scheme.
To the best of our knowledge, the presented result is the first strong convergence result for a constraint-preserving scheme in the considered setting.
\end{abstract}

\maketitle
\textsc{AMS-Classification.} Primary 60H35, 74Hxx; Secondary 60H10, 58J65, 65C30. \\
\textsc{Keywords.} Stochastic differential-algebraic equation; manifold-valued stochastic differential equation; nonlinear constraint; numerical approximation; drift-truncated scheme; strong convergence.


\section{Introduction}\label{sec:intro}

Both the numerical approximation of differential-algebraic equations (DAEs) and of stochastic differential equations (SDEs) have been extensively studied in the literature.
Convergence results for higher index DAEs 
can be found, 
e.g., in \cite{brenan:b:1989,gear:p:1985,hairer:b:1989,hairer:b:1991,loetstedt:p:1986}, 
and the convergence analysis of numerical schemes for SDEs with non-globally Lipschitz continuous coefficients has been an active and rapidly evolving field of research within the last years, see, e.g., \cite{beyn:p:2016,beyn:p:2017,fang:p:2016,gyongy:p:1998,higham:p:2002,hutzenthaler:p:2014,hutzenthaler:p:2015,hutzenthaler:p:2011,hutzenthaler:p:2012,mao:p:2015, mao:p:2016,sabanis:p:2013,sabanis:p:2015}. 
In contrast, the convergence analysis of numerical schemes for stochastic differential-algebraic equations (SDAEs) is far less developed.
In this article, we combine key concepts from both areas, numerics of DAEs and numerics of SDEs,  
and
prove strong convergence of a constraint-preserving numerical scheme for a large class of second order SDEs with nonlinear algebraic constraints.

We are interested in the dynamics of constrained stochastic mechanical systems which are modelled by SDAEs of the type
\begin{align}\label{eq:mainSDAE}
	\begin{split}
		\dl r(t)&= v(t)\,\dl t\\
		M\dl v(t)&= a(r(t),v(t))\,\dl t+ B(r(t),v(t))\, \dl w(t) + \nabla g(r(t))\,\dl\mu(t)\\
		g(r(t))&=0,
	\end{split}
\end{align}
with $n$-dimensional
position and velocity processes $r=(r(t))_{t\geq0}$ and $v=(v(t))_{t\geq0}$, positive definite and symmetric mass matrix $M\in\bR^{n\times n}$,
$\ell$-dimensional
Brownian motion $w=(w(t))_{t\geq 0}$, coefficient functions $a\colon \bR^n\times\bR^n\to\bR^n$ and $B\colon \bR^n\times\bR^n\to\bR^{n\times\ell}$, and a sufficiently smooth constraint function $g\colon\bR^n\to\bR^m$, $m<n$.
By $\nabla g(x)\in\bR^{n\times m}$ we denote the transpose of the Jacobian matrix $Dg(x)\in\bR^{m\times n}$ of $g$ at $x\in\bR^n$, which is assumed to be of full rank for all $x$ in a neighborhood of the 
constraint manifold $\cM=\{x\in\bR^n:g(x)=0\}.$
The process $\mu=(\mu(t))_{t\geq0}$ is an 
$m$-dimensional
semimartingale, implicitly given as the Lagrange multiplier to the holonomic constraint $g(r(t))=0$. 
It thus determines the constraint force, which, according to d'Alembert's principle, acts orthogonal to the tangent space $T_x\cM=\{y\in\bR^n:Dg(x)y=0\}$ at the current position $x\in\cM$.
Constrained SDEs of the form \eqref{eq:mainSDAE} occur in various applications,
ranging from molecular dynamics 
\cite{leimkuhler:p:2016,leimkuhler:b:2015,lelievre:b:2010,lelievre:p:2012,ciccotti:p:2006,walter:p:2011}
to models for the dynamics of fibers in turbulent airflows in the context industrial production processes of non-woven textiles \cite{lindner:p:2016,lindner:p:2018}.

Our assumptions on the coefficients $a$, $B$ and the constraint function $g$ in \eqref{eq:mainSDAE} are weak enough to cover a large variety of practically relevant examples.
We assume that $a$ and $B$ are locally Lipschitz continuous, of polynomial and linear growth, respectively, and that the mapping $(x,y)\mapsto(y,a(x,y))$ satisfies a one-sided linear growth condition.
Note that the diffusion coefficient $B$ is allowed to depend on both position and velocity, which is motivated by \cite{lindner:p:2016,lindner:p:2018}.
The derivatives of $g$ are assumed to satisfy suitable boundedness and non-degeneracy conditions in a neighborhood of the manifold.
Our setting allows in particular for quadratic constraint functions, which are relevant in various applications, cf.~Section~\ref{sec:examples}.
The precise assumptions are stated in Section~\ref{subsec:assumptions}.
By a slight generalization of the existence result in \cite{lindner:p:2016}, we know that for all initial conditions $r_0\in\cM$, $v_0\in T_{r_0}\cM$ there exists a unique global strong solution $(r,v,\mu)$ to the SDAE \eqref{eq:mainSDAE}, see  Section~\ref{sec:solutiontheory} for details.

There seem to be no strong convergence results available in the literature so far for constraint-preserving schemes for SDAEs of the type \eqref{eq:mainSDAE} with quadratic constraint functions $g$.
A particular class of such SDAEs are the constrained Langevin-type equations considered in molecular dynamics. Various constraint-preserving numerical schemes have been proposed and applied in this context, see, e.g., \cite{leimkuhler:p:2016,lelievre:b:2010,lelievre:p:2012,ciccotti:p:2006} and the references therein.
Typically, in
the corresponding sections of these works the focus lies mainly on the practical efficiency of the algorithms but not so much on a fully rigorous convergence analysis.
Often the proposed partly implicit schemes are even known to be not always solvable, cf.~\cite[Section~2]{leimkuhler:p:2016}.
We are also not aware of 
fully completed
proofs concerning weak convergence. 
Further works related to our problem concern SDEs which are given in an explicit form, without an implicit Lagrange multiplier process as in \eqref{eq:mainSDAE}, and the analysis of structure-preserving algorithms for their numerical approximation, see, e.g., \cite{hong:p:2011,malham:p:2008,milstein:p:2002,milstein:p:20022,milstein:b:2004,zhou:p:2016}. 
The theoretical results in these works rely on
classical global Lipschitz assumptions on the drift and diffusion coefficients. 
We note that the Lagrange multiplier process $\mu$ in \eqref{eq:mainSDAE} can be represented explicitly in terms of $r$ and $v$, so that the SDAE can be equivalently reformulated as an inherent SDE
 which does not involve an implicit Lagrange multiplier, see Eq.~\eqref{eq:inherentSDE} in Section~\ref{sec:solutiontheory}. 
However, the drift coefficient appearing in this inherent SDE is typically neither globally Lipschitz continuous nor globally one-sided Lipschitz continuous, even if the coefficients $a$ and $B$ in \eqref{eq:mainSDAE} are chosen to be constant, see Section~\ref{subsec:pendel} for a simple toy example.
Thus, the results in the mentioned works are not applicable in our setting.

Our numerical scheme combines ideas from different areas:
On the one hand, we consider a \emph{Gear-Gupta-Leimkuhler (GGL) reformulation} of \eqref{eq:mainSDAE} to simplify the approximation problem 
and use a
\emph{half-explicit method} where only the algebraic variables are discretized in an implicit manner.
These concepts are standard in 
DAE theory, 
cf.~\cite{brenan:b:1989,gear:p:1985,hairer:b:1989,hairer:b:1991,lubich:p:1989,ostermann:p:1993}. 
On the other hand, we follow the \emph{taming} or \emph{truncation approach}  
developed in recent years in the context of numerical methods for SDEs with non-globally Lipschitz continuous coefficients, cf.~\cite{hutzenthaler:p:2015,hutzenthaler:p:2012,mao:p:2015,mao:p:2016,sabanis:p:2013}. 
We refer to Section~\ref{subsec:scheme} for a short discussion of the concepts.
The combination of these ideas motivates 
the following
half-explicit drift-truncated Euler scheme for the approximation of the SDAE~\eqref{eq:mainSDAE}.
Given initial conditions $r_0\in\cM$, $v_0\in T_{r_0}\cM$, a finite time interval $[0,T]$, and a number of time steps $N\in\bN$, we set $h=T/N$ and  approximate the solution processes $(r(t))_{t\in[0,T]}$ and $(v(t))_{t\in[0,T]}$ in \eqref{eq:mainSDAE} by the time-discrete processes $(r_k)_{k\in\{0,\ldots,N\}}=(r_k^N)_{k\in\{0,\ldots,N\}}$ and $(v_k)_{k\in\{0,\ldots,N\}}=(v_k^N)_{k\in\{0,\ldots,N\}}$ iteratively defined by
\begin{align}\label{eq:approximationintro}
	\begin{split}
		r_{k+1}&= r_k+\eta_k\,v_k\,h+M^{-1}\nabla g(r_k)\,\kappa_{k+1}\\
		M v_{k+1}&=Mv_k+\eta_k\,a(r_k,v_k)\,h+B(r_k,v_k)\,\Delta_h w_k+\nabla g(r_k)\,\lambda_{k+1}\\
		g(r_{k+1})&=0\\
		Dg(r_{k+1})v_{k+1}&=0.
	\end{split}
\end{align}
Here $\eta_k$ is a scalar truncation term, chosen as 
\begin{align}\label{eq:defetaintro}
	\eta_k=\min\Big(1\,,\;\frac{C_\eta}{\max(\|v_k\|,\|v_k\|^2,\|a(r_k,v_k)\|)\,h}\,\Big),
\end{align}
with a suitable constant $C_\eta\in(0,\infty)$,
see Section~\ref{subsec:scheme} for details and a discussion.
The $m$-dimensional Lagrange multipliers $\kappa_{k+1}$ and $\lambda_{k+1}$ are implicitly determined by the constraints in the third and forth line of \eqref{eq:approximationintro},
and $\Delta_h w_k=w((k+1)h)-w(kh)$ is the increment of the driving Brownian motion on the time interval $[kh,(k+1)h]$. Since the Lagrange multiplier $\lambda_{k+1}$ 
formally corresponds to the infinitesimal increment $\dl\mu(t)$ in \eqref{eq:mainSDAE}, 
it is natural to use the time-discrete process $(\mu_k)_{k\in\{0,\ldots,N\}}=(\mu_k^N)_{k\in\{0,\ldots,N\}}$ defined by 
\begin{align}\label{eq:defmuintro}
	\mu_k=\sum_{j=1}^k\lambda_j
\end{align}
as an approximation of the Lagrange multiplier process $(\mu(t))_{t\in[0,T]}$ in \eqref{eq:mainSDAE}.
We remark that the presence of the truncation factor $\eta_k$ in \eqref{eq:approximationintro} fulfills two purposes: It ensures the solvability of the scheme and moreover allows for the derivation of moment bounds needed to obtain strong convergence.

In this article, we verify the strong convergence of the scheme~\eqref{eq:approximationintro} towards the system~\eqref{eq:mainSDAE} in a pathwise uniform $L_p$-sense as described below.
To the best of our knowledge, this is the first proof of strong convergence for a constraint-preserving scheme for SDAEs of the type \eqref{eq:mainSDAE} with possibly quadratic constraint functions~$g$. 
Moreover, it also seems to be the first proof of convergence at all for a constraint-preserving scheme in the considered general setting.

Our \emph{first main result}, Theorem~\ref{theo:mainonestep}, states that 
the scheme \eqref{eq:approximationintro} is uniquely solvable for 
all choices of
$N\in\bN$ 
and 
$r_0\in\cM$, $v_0\in T_{r_0}\cM$,
in the sense that there exists exactly one 
solution $\big((r_k,v_k,\kappa_k,\lambda_k)\big)_{k\in\{1,\ldots,N\}}$
such that the Lagrange multiplier process $(\kappa_k)_{k\in\{1,\ldots,N\}}$ satisfies a specific boundedness condition.
This existence result is non-trivial, in particular in view of the fact mentioned above that alternative schemes considered in the literature are often known to be not always solvable. 
The proof of Theorem~\ref{theo:mainonestep} is based on a homotopy argument and relies on the presence of the truncation term $\eta_k$ in \eqref{eq:approximationintro}.
Note that we do not impose any condition on the step-size $h$ and moreover no truncation of the noise term $B(r_k,v_k)\Delta_hw_k$ is needed.
Besides the existence and uniqueness of a numerical solution, 
Theorem~\ref{theo:mainonestep} also provides a suitable decomposition of the Lagrange multiplier $\lambda_{k+1}$ as well as suitable norm estimates for the single components of $\lambda_{k+1}$ and for $\kappa_{k+1}$, crucial for proving convergence of the scheme.

Our \emph{second main result}, Theorem~\ref{theo:mainend},
concerns the strong convergence of the scheme~\eqref{eq:approximationintro}.
Let $\tilde r^N=(\tilde r^N(t))_{t\in[0,T]}$, $\tilde v^N=(\tilde v^N(t))_{t\in[0,T]}$ and $\tilde \mu^N=(\tilde \mu^N(t))_{t\in[0,T]}$  be defined by piecewise constant or piecewise linear interpolation of the corresponding time-discrete processes 
$(r_k^N)_{k\in\{0,\ldots,N\}}$, $(v_k^N)_{k\in\{0,\ldots,N\}}$ and $(\mu_k^N)_{k\in\{0,\ldots,N\}}$
in \eqref{eq:approximationintro} and \eqref{eq:defmuintro}. 
For instance, in the piecewise linear case we have
$\tilde r^N(t)=\frac{(k+1)h-t}hr_k^N+\frac{t-kh}h\,r_{k+1}^N$ for all $t\in[kh,(k+1)h]$, $k\in\{0,\ldots,N-1\}$.
Theorem~\ref{theo:mainend} states that the time-interpolated solution $(\tilde r^N,\tilde v^N,\tilde\mu^N)$ to  \eqref{eq:approximationintro}, \eqref{eq:defmuintro} converges strongly towards the solution $(r,v,\mu)$ to the SDAE~\eqref{eq:mainSDAE} with initial conditions $r_0,v_0$, in the sense that
\begin{align}\label{eq:convergence_intro}
	\lim_{N\to\infty}
	\bE\bigg[\sup_{t\in[0,T]}\Big(
	\big\|r(t)-\tilde r^N(t)\big\|^p
	+\big\|v(t)-\tilde v^N(t)\big\|^p
	+\big\|\mu(t)-\tilde \mu^N(t)\big\|^p
	\Big)\bigg]=0
\end{align}
for all $p\in[1,\infty)$.
The main idea of the proof is to use the existence,  uniqueness and decomposition result from Theorem~\ref{theo:mainonestep} to formally rewrite the scheme \eqref{eq:approximationintro} as 
a fully-explicit one-step approximation 
of the underlying inherent SDE,
an explicit drift-truncated Euler scheme with an additional explicit perturbation term,
and to use the norm estimates for the Lagrange multipliers $\kappa_{k+1}$ and $\lambda_{k+1}$ from Theorem~\ref{theo:mainonestep} to verify suitable consistency, semi-stability and moment growth conditions for the discrete solution.
This enables us to apply a general convergence result from ~\cite{hutzenthaler:p:2015} to obtain that
$\lim_{N\to\infty}
\sup_{t\in[0,T]}
\bE\big(
	\|r(t)-\tilde r^N(t)\|^p
	+\|v(t)-\tilde v^N(t)\|^p
\big)=0$.
The pathwise uniform strong convergence of $(\tilde r^N,\tilde v^N)$ and  
the convergence of $\tilde\mu^N$ to the Lagrange multiplier process $\mu$ are then proven in separate steps, 
by further exploiting the assumptions on the coefficients $a$ and $B$, 
the decomposition and norm estimates from Theorem~\ref{theo:mainonestep} for the Lagrange multipliers $\kappa_{k+1}$, $\lambda_{k+1}$, 
and the equivalence of \eqref{eq:mainSDAE} and the underlying inherent SDE.


Let us shortly discuss the question of convergence rates. 
Most strong approximation results with rates for multi-dimensional SDEs are based on at least a global monotonicity assumption on the coefficients and thus in particular on a global one-sided Lipschitz assumption on the drift coefficient, see, e.g.,~\cite{higham:p:2002,hutzenthaler:p:2012,sabanis:p:2013}.
As already mentioned, the drift coefficient of the inherent SDE associated to the SDAE~\eqref{eq:mainSDAE} typically fails to satisfy such a condition, cf.~Section~\ref{sec:solutiontheory} and the example in Section~\ref{subsec:pendel}.
A recently developed strategy to obtain strong convergence rates in the case of SDEs with non-globally monotone coefficients makes use of exponential integrability properties of suitably tamed/truncated schemes \cite{hutzenthaler:p:2014}. 
This approach might potentially also be useful in the context of SDAEs of the type \eqref{eq:mainSDAE} but lies beyond the scope of the present work.

To complete the picture, let us also note that the approximation of so-called index one stochastic differential-algebraic equations has been analyzed in several papers, mainly in the context of electric circuits, cf.~\cite{kupper:p:2012,kupper:p:2015,roemisch:p:2003,schein:p:1998,winkler:p:2003} and the references therein. 
These equations are of a different structure than \eqref{eq:mainSDAE}. In particular, the standard assumptions used 
in the context of index one SDAEs, e.g., that the constraints are globally uniquely solvable for the algebraic variables \cite{winkler:p:2003}, are not fulfilled in our setting. One can think of Theorem \ref{theo:mainonestep} as partly substituting such assumptions.

The article is organized as follows:
In Section~\ref{sec:preliminaries} we describe our setting in detail (Subection~\ref{subsec:assumptions}) and state an existence and uniqueness result which ensures the global strong solvability of the SDAE~\eqref{eq:mainSDAE} as well as the equivalence of \eqref{eq:mainSDAE} and the above mentioned underlying inherent SDE (Subsection~\ref{sec:solutiontheory}).
In Section~\ref{sec:approximation} we first specify and discuss 
our approximation scheme
(Subsection~\ref{subsec:scheme}) before we give a detailed analysis of its solvability in Theorem~\ref{theo:mainonestep} and Corollary~\ref{cor:applyingonestep}
 (Subsection~\ref{sec:solvingapproximation}).
The strong convergence result \eqref{eq:convergence_intro} is proven in Section~\ref{sec:strongconv}; a complete formulation is given in Theorem~\ref{theo:mainend}.
Its proof is based on the results from Section~\ref{sec:approximation} as well as a suitable reformulation of the problem (Subsection~\ref{subsec:SDEsetting}) and a collection of auxiliary results from the literature 
(Subsection~\ref{subsec:auxiliary}).
We deduce the convergence of $(\tilde r^N,\tilde v^N)$ in a non-pathwise sense by verifying specific consistency, semi-stability and moment growth conditions (Subsection~\ref{subsec:convergence_rv}) and show the pathwise uniform convergence of $(\tilde r^N,\tilde v^N,\tilde \mu^N)$ in a separate step (Subsection~\ref{subsec:convergence_mu}).
Concrete examples for SDAEs of the type \eqref{eq:mainSDAE} are given in Section~\ref{sec:examples}. 
Finally, a proof of the global strong solvability of \eqref{eq:mainSDAE} is sketched in Appendix~\ref{sec:appsolv} and a globalized version of the implicit function theorem used in the proof of Theorem~\ref{theo:mainonestep} is presented in Appendix~\ref{sec:appGIFT}.

\section{Preliminaries}

\label{sec:preliminaries}
\noindent
\emph{General notation.} 
$\bN=\{1,2,\ldots\}$ is the set of natural numbers excluding zero. In the sequel, let $d,d_1,d_2\in\bN$.
By $\|\cdot\|$ and $\langle\cdot,\cdot\rangle$ we denote the Euclidean norm and the corresponding inner product in finite-dimensional real vector spaces. For instance,  we have $\|x\|:=\big(\sum_{i=1}^d x_i^2\big)^{1/2}$ for $x=(x_1,\ldots,x_d)\in\bR^d$, $\|(x,y)\|:=\big(\sum_{i=1}^{d_1} x_i^2+\sum_{j=1}^{d_2} y_j^2\big)^{1/2}$ for $(x,y)\in\bR^{d_1}\times\bR^{d_2}$, and $\|B\|:=\big(\sum_{i=1}^{d_1}\sum_{j=1}^{d_2} B_{i,j}^2\big)^{1/2}$ for $B=(B_{i,j})\in\bR^{d_1\times d_2}$.
If $f\colon\bR^{d_1}\to\bR^{d_2}$ is a differentiable function, we write $Df(x)\in\bR^{d_2\times d_1}$ for its Jacobian matrix at a point $x\in\bR^{d_1}$ and $\nabla f(x):=(Df(x))^\top\in \bR^{d_1\times d_2}$ for the transpose thereof.
By $\sL(\bR^{d_1},\bR^{d_2})$ and $\sL^{(k)}(\bR^{d_1},\bR^{d_2})$, $k\in\bN$, we denote the spaces of linear operators from $\bR^{d_1}$ to $\bR^{d_2}$ and $k$-fold multilinear operators from $(\bR^{d_1})^{k}$ to $\bR^{d_2}$, respectively, with norms $\|L\|_{\sL(\bR^{d_1},\bR^{d_2})}:=\sup_{x\in\bR^{d_1},\|x\|\leq 1}\|Lx\|$ 
and $\|L\|_{\sL^{(k)}(\bR^{d_1},\bR^{d_2})}:=\sup_{y_1,\ldots,y_k\in\bR^{d_1},\|y_1\|\leq1,\ldots,\|y_{k}\|\leq 1}\|L(y_1,\ldots,y_k)\|$.
The Jacobian matrix $Df(x)$ of a sufficiently smooth function $f\colon\bR^{d_1}\to\bR^{d_2}$ at a point $x\in\bR^{d_1}$ is identified with the corresponding element in $\sL(\bR^{d_1},\bR^{d_2})$, and, accordingly, the higher derivatives $D^kf(x)$, $k\in\bN$, are elements in $\sL^{(k)}(\bR^{d_1},\bR^{d_2})$.
If $X\colon\Omega\to\bR^d$ is a random variable on a probability space $(\Omega,\cF,\bP)$ and $p\geq1$ we write $\|X\|_{L_p}:=(\bE\|X\|^p)^{1/p}\in[0,\infty]$.

\subsection{Setting and assumptions}\label{subsec:assumptions}

Here we describe our setting concerning the SDAE ~\eqref{eq:mainSDAE} in detail. 
Let $n\in\{2,3,\ldots\}$, $m\in\{1,\ldots,n-1\}$, and $M\in\bR^{n\times n}$ be a symmetric and (strictly) positive definite matrix. The constraint function $g\colon\bR^n\to\bR^m$ in \eqref{eq:mainSDAE} determines the constraint manifold
\begin{align}\label{eq:manifoldM}
	\cM:=\{x\in\bR^n:g(x)=0\}
\end{align}
and satisfies the regularity conditions stated in Assumption~\ref{ass:g} below. We first introduce some notation associated with $\cM$.
\begin{notation}[Constraint manifold]
	The following notation associated with the constraint manifold $\cM$ in \eqref{eq:manifoldM} is used throughout the article.
	\begin{itemize}[leftmargin=7mm]
		\item
		The tangent space $T_x\cM\subset\bR^n$ at a point $x\in\cM$ and the tangent bundle $T\cM\subset\bR^n\times\bR^n$  are given by
		\begin{align*}
			T_x\cM:=\{y\in\bR^n:Dg(x)y=0\},\quad T\cM:=\bigcup_{x\in\cM}\{x\}\times T_x\cM.
		\end{align*}
		\item
		For $x\in\bR^n$ we denote by 
		$$
		G_M(x):=Dg(x)\,M^{-1}\,\nabla g(x)\,\in\bR^{m\times m}
		$$
		the Gram matrix associated with the constraint, where $M\in\bR^{n\times n}$ is the positive definite and symmetric mass matrix appearing in \eqref{eq:mainSDAE}.  
		\item
		For all $x\in\bR^n$ such that $G_M(x)$ is invertible we set
		$$
		P_M(x):=\Id-\nabla g(x)\,G_M^{-1}(x)\,Dg(x)\,M^{-1}\,\in\bR^{n\times n},
		$$
		where $G_M^{-1}(x):=(G_M(x))^{-1}$. Note that for $x\in\cM$ the matrix $P_M(x)$ represents the orthogonal projection of $\bR^n$ onto $M(T_x\cM):=\{My:y\in T_x\cM\}$, corresponding to the inner product $\langle M^{-1/2}\cdot,M^{-1/2}\cdot\rangle$.
		\item
		For $\varepsilon>0$ we introduce the environments 
		$$
		\cM^\varepsilon:=\bigcup_{x\in\cM} B_\varepsilon(x),\quad (T\cM)^\varepsilon:=\bigcup_{(x,y)\in T\cM}B_{\varepsilon}(x)\times B_\varepsilon(y)
		$$
		of the constraint manifold $\cM$ and the tangent bundle $T\cM$. Here $B_\varepsilon(x):=\{z\in\bR^n:\|z-x\|<\varepsilon\}$ is the open ball in $\bR^n$ with radius $\varepsilon$ and center $x$.
	\end{itemize}
\end{notation}

With this notation at hand, we are able to state the regularity assumptions on the constraint function $g$ in detail.
\begin{assumption}[Constraint function $g$]\label{ass:g}
	The constraint function $g\colon \bR^n\to\bR^m$ in \eqref{eq:mainSDAE} is three times continuously differentiable. There exists $\varepsilon>0$ such for all $x\in\cM^\varepsilon$ the Jacobian matrix $Dg(x)\in\bR^{m\times n}$  has full rank, 
	and 
	$$
	\sup_{x\in\cM^\varepsilon}\max\big(\|Dg(x)\|,\|G_M^{-1}(x)\|\big) <\infty.
	$$
	Moreover, the higher order derivative mappings $D^2g\colon\bR^n\to \sL^{(2)}(\bR^n,\bR^m)$ and $D^3g\colon\bR^n\to \sL^{(3)}(\bR^n,\bR^m)$ are bounded.
\end{assumption}

In the sequel we work with the finite constant
\begin{equation}\label{eq:C_g}
	C_g:=
	\left\{
	\begin{aligned}
		&\sup_{x_1\in\cM^\varepsilon}\sup_{x_2\in\bR^n}\max\Big(\|M^{-1} \nabla g(x_1)\|_{\sL(\bR^n,\bR^m)},\|G_M(x_1)\|_{\sL(\bR^m)},\vspace{2mm}\\
		&\qquad\|G_M^{-1}(x_1)\|_{\sL(\bR^m)},\|D^2g(x_2)\|_{\sL^{(2)}(\bR^n,\bR^m)},\|D^3g(x_2)\|_{\sL^{(3)}(\bR^n,\bR^m)}\Big)
	\end{aligned}
	\right. ,
\end{equation}
where $\varepsilon>0$ is as in Assumption~\ref{ass:g}.

We assume that $(\Omega,\cF,\bP)$ is a complete probability space, $(\cF_t)_{t\geq0}$ a filtration of sub-$\sigma$-algebras of $\cF$ satisfying the usual conditions, and  the process $w=(w(t))_{t\geq0}$ in \eqref{eq:mainSDAE} is an $\ell$-dimensional standard $(\cF_t)$-Wiener process on $(\Omega,\cF,\bP)$, for some $\ell\in\bN$.
For the coefficient functions $a$ and $B$ we assume the following.

\begin{assumption}[Coefficient functions $a$ and $B$]\label{ass:aB}
	The mappings $a\colon\bR^n\times \bR^n\to\bR^n$ and $B\colon\bR^n\times \bR^n\to\bR^{n\times\ell}$ in \eqref{eq:mainSDAE} 
	fulfill the following conditions.
	\begin{itemize}[leftmargin=7mm]
		\item 
		Local Lipschitz continuity: 
		For all $R\in(0,\infty)$ there exist a constant $C_{R}\in[0,\infty)$ such that, for all $(x,y),(\tilde x,\tilde y)\in B_R((0,0))$, 
		\begin{align*}
			\qquad
			\|a(x,y)-a(\tilde x,\tilde y)\|+\|B(x,y)-B(\tilde x,\tilde y)\|\leq C_R\|(x,y)-(\tilde x,\tilde y)\|.
		\end{align*}
		Here $B_R((0,0))$ is the open ball in $\bR^n\times\bR^n$ with radius $R$ and center $(0,0)$.
		\item 
		Growth conditions for $a$:
		The function $a$ satisfies the following one-sided linear growth and polynomial growth conditions. 
		There exist constants $C_a\in[0,\infty)$ and $p_a\in[1,\infty)$ such that, for all $(x,y)\in\bR^n\times\bR^n$,
		\begin{equation*}
			\qquad
			\begin{aligned}
				\big\langle(x,y)\,,\,(y,a(x,y))\big\rangle&\leq C_a\,\big(1+\|(x,y)\|^2\big),\\
				\|a(x,y)\|&\leq C_a\,\big(1+\|(x,y)\|^{p_a}\big).
			\end{aligned}
		\end{equation*}
		\item
		Linear growth of $B$:
		There exists a constant $C_B\in[0,\infty)$ such that, for all $(x,y)\in\bR^n\times\bR^n$,
		\begin{align*}
			\qquad
			\|B(x,y)\|&\leq C_B\,\big(1+\|(x,y)\|\big).
		\end{align*}
	\end{itemize} 
\end{assumption}

Finally, we specify the the intitial conditions $r(0)=r_0$ and $v(0)=v_0$ for the position and velocity processes $(r(t))_{t\geq0}$ and $(v(t))_{t\geq0}$ in \eqref{eq:mainSDAE}.
\begin{assumption}[Initial conditions]\label{ass:init}
	The initial conditions $r_0\colon\Omega\to\bR^n$, $v_0\colon\Omega\to\bR^n$ are $\cF_0$-measurable random variables, $p$-integrable for all $p\in[1,\infty)$, such that $\bP((r_0,v_0)\in T\cM)=1$.
\end{assumption}

\subsection{Solvability of the SDAE}\label{sec:solutiontheory}

Under the assumptions in Subsection~\ref{subsec:assumptions} there exists a unique global strong solution $(r,v,\mu)$ to the SDAE \eqref{eq:mainSDAE}. We specify the notion of a solution as follows.

\begin{definition}\label{def:solutionSDAE}
	A \emph{(global strong) solution to the SDAE \eqref{eq:mainSDAE}} with initial conditions $r_0,v_0$ fulfilling Assumption~\ref{ass:init} is a triple $(r,v,\mu)$ consisting of $\bR^n$-valued continuous $(\cF_t)$-adapted processes $r=(r(t))_{t\geq0}$ and $v=(v(t))_{t\geq0}$ as well as an $\bR^m$-valued continuous $(\cF_t)$-semimartingale $\mu=(\mu(t))_{t\geq0}$ with $\mu(0)=0$ such that, $\bP$-almost surely, the following equalities hold for all $t\geq0$:
	\begin{align*}
	r(t)&=r_0+\int_0^t v(s)\,\dl s,\\
	Mv(t)&=Mv_0+\int_0^t a(r(s),v(s))\,\dl s+\int_0^t B(r(s),v(s))\,\dl w(s)\\
	&\quad+\int_0^t \nabla g(r(s))\,\dl \mu(s),\\
	g(r(t))&=0.
	\end{align*}
\end{definition}

The following result concerning the global strong solvability of SDAEs of the type  \eqref{eq:mainSDAE} is a slight generalization of \cite[Theorem 3.1]{lindner:p:2016}.
Compared to \cite{lindner:p:2016} we consider weaker assumptions on the drift coefficient $a$, a more general class of constraint functions $g$, $M$ is not assumed to be the identity matrix, and finite moments of all orders are established.

\begin{theorem}[Existence and uniqueness]\label{theo:solutionandinherent}
	Let the assumptions in Subsection~\ref{subsec:assumptions} be fulfilled. 
	Then there exists a unique (up to indistinguishability) global strong solution $(r,v,\mu)$ to the SDAE \eqref{eq:mainSDAE} with initial conditions $r_0,v_0$ in the sense of Definition~\ref{def:solutionSDAE}.
	With probability one, the following equality holds for all $t\geq0$: 
	\begin{align}\label{eq:defmu}
	\begin{split}
	\mu(t)&=-\int_0^t G_M^{-1}(r(s)) \Big\{ Dg(r(s))M^{-1} \big[a(r(s),v(s))\dl s+ B(r(s),v(s)) \dl w(s)\big]\\
	&\qquad+D^2g(r(s))(v(s),v(s))\dl s\Big\}.
	\end{split}
	\end{align}
	Moreover, for all $p,T\in[1,\infty)$  the $p$-th moment $\bE\big(\sup_{t\in[0,T]}\|(r(t),v(t),\mu(t))\|^p\big)$ is finite.
\end{theorem}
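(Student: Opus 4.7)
The plan is to reduce the SDAE to an inherent SDE on the tangent bundle, establish local existence and uniqueness from standard SDE theory, and then promote this to a global solution with finite moments of all orders by exploiting the fact that the constraint force acts orthogonally to $T\cM$ and therefore drops out of a suitably chosen Lyapunov functional.

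\textbf{Step 1: Explicit formula for $\mu$.} Assuming for the moment that a solution $(r,v,\mu)$ exists, I would formally derive the expression \eqref{eq:defmu} by differentiating the constraint twice. Since $r$ has absolutely continuous paths with $\dl r=v\,\dl t$, the identity $g(r(t))\equiv 0$ and Assumption~\ref{ass:g} yield $Dg(r(t))v(t)=0$, i.e.\ $(r(t),v(t))\in T\cM$. Applying It\^o's formula to the semimartingale $t\mapsto Dg(r(t))v(t)$, using $\dl r=v\,\dl t$, gives
\begin{align*}
0=\dl\!\left(Dg(r(t))v(t)\right)&=D^2g(r(t))(v(t),v(t))\,\dl t+Dg(r(t))\,\dl v(t)\\
&=D^2g(r(t))(v(t),v(t))\,\dl t+Dg(r(t))M^{-1}\bigl[a(r,v)\,\dl t+B(r,v)\,\dl w\bigr]\\
&\quad+G_M(r(t))\,\dl\mu(t).
\end{align*}
Since $G_M(r(t))$ is invertible on $\cM$ by Assumption~\ref{ass:g}, solving for $\dl\mu$ gives~\eqref{eq:defmu}.

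\textbf{Step 2: Inherent SDE and local solvability.} Substituting the expression for $\dl\mu$ into the equation for $v$ yields the inherent SDE
\begin{equation*}
\begin{aligned}
\dl r(t)&=v(t)\,\dl t,\\
\dl v(t)&=P_M(r(t))M^{-1}a(r(t),v(t))\,\dl t+P_M(r(t))M^{-1}B(r(t),v(t))\,\dl w(t)\\
&\quad-M^{-1}\nabla g(r(t))G_M^{-1}(r(t))D^2g(r(t))(v(t),v(t))\,\dl t,
\end{aligned}
\end{equation*}
whose coefficients, thanks to Assumptions~\ref{ass:g} and~\ref{ass:aB}, are locally Lipschitz continuous on a neighbourhood of $T\cM$. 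Standard SDE theory then produces a unique maximal strong solution $(r,v)$ up to an explosion time $\tau$, and direct verification using $Dg(r_0)v_0=0$ shows that $g(r(t))=0$ and $Dg(r(t))v(t)=0$ hold for $t<\tau$, so $(r(t),v(t))\in T\cM$. Defining $\mu$ by \eqref{eq:defmu} then gives a local solution of the original SDAE in the sense of Definition~\ref{def:solutionSDAE}; uniqueness of the triple $(r,v,\mu)$ follows from the uniqueness of $(r,v)$ together with the uniqueness of the decomposition forced by~\eqref{eq:defmu}.

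\textbf{Step 3: Non-explosion and moments of all orders.} This is the main obstacle, because neither the inherent drift nor the constraint force term are globally one-sided Lipschitz. The key observation is that the constraint force acts in the direction $M^{-1}\nabla g(r)$, which is orthogonal to $v\in T_r\cM$ with respect to $\langle M\cdot,\cdot\rangle$. Accordingly, with $V(x,y):=\tfrac12\|x\|^2+\tfrac12\langle y,My\rangle$ and a localizing sequence of stopping times $\tau_R=\inf\{t\geq0:\|(r(t),v(t))\|\geq R\}\wedge\tau$, It\^o's formula for the SDAE gives
\begin{equation*}
\dl V(r,v)=\bigl\langle (r,v),(v,a(r,v))\bigr\rangle\dl t+\tfrac12\operatorname{tr}\bigl(B(r,v)^\top M^{-1}B(r,v)\bigr)\dl t+\langle Dg(r)v,\dl\mu\rangle+\dl N_t,
\end{equation*}
with $N_t$ a local martingale. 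The crucial term $\langle Dg(r(t))v(t),\dl\mu(t)\rangle$ vanishes identically since $(r(t),v(t))\in T\cM$, so the one-sided linear growth of $(x,y)\mapsto(y,a(x,y))$ and the linear growth of $B$ from Assumption~\ref{ass:aB} yield
\begin{equation*}
\dl V(r,v)\leq C(1+V(r,v))\,\dl t+\dl N_t.
\end{equation*}
Gronwall's lemma after taking expectations up to $\tau_R\wedge t$ then gives a uniform-in-$R$ bound on $\bE\,V(r(t\wedge\tau_R),v(t\wedge\tau_R))$, which forces $\tau=\infty$ almost surely and yields $\bE\,\sup_{t\in[0,T]}V(r(t),v(t))<\infty$ via the Burkholder--Davis--Gundy inequality. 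The higher moment bounds for $(r,v)$ follow by the same argument applied to $V^{p/2}$; the corresponding bounds on $\mu$ are then obtained by plugging the bounds for $(r,v)$ into~\eqref{eq:defmu}, using Assumption~\ref{ass:g}, the polynomial growth of $a$, and the linear growth of $B$, together with It\^o's isometry and BDG. Combining these steps delivers the global strong solvability together with the claimed $p$-th moment bounds for every $p\in[1,\infty)$.
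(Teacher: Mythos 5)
Your proposal is correct and follows essentially the same route as the paper's Appendix~\ref{sec:appsolv}: derive \eqref{eq:defmu} by applying It\^o's formula to $Dg(r(t))v(t)$, pass to the inherent SDE to get a unique maximal solution, and then rule out explosion and obtain all moments via the Lyapunov functional $\|x\|^2+\langle y,My\rangle$, whose It\^o expansion kills the constraint-force term because $Dg(r)v=0$ on $T\cM$ (the paper's identities \eqref{eq:innerproducts_app}), combined with the one-sided growth assumption, BDG and Gronwall.
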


The proof of Theorem~\ref{theo:solutionandinherent} is mostly analogous to that of \cite[Theorem 3.1]{lindner:p:2016}. 
A sketch of the key steps of the proof and the differences to \cite{lindner:p:2016} can be found in Appendix~\ref{sec:appsolv}.

As a direct consequence of Theorem~\ref{theo:solutionandinherent} we know that, under the assumptions in Subsection~\ref{subsec:assumptions}, the following holds: If $(r,v,\mu)$ is the unique global strong solution to the SDAE~\eqref{eq:mainSDAE} with initial conditions $r_0,v_0$,
then $(r,v)$ solves the so-called \emph{inherent} 
or \emph{underlying SDE}
\begin{align}\label{eq:inherentSDE}
	\begin{split}
		\dl r(t)&= v(t)\,\dl t\\
		M \dl v(t)&= P_M(r(t))\,a(r(t),v(t))\,\dl t+ P_M(r(t))\,B(r(t),v(t))\,\dl w(t)\\
		&\quad-\nabla g (r(t))\,G_M^{-1}(r(t))\,D^2g(r(t))(v(t),v(t))\,\dl t.
	\end{split}
\end{align}
Conversely, consider arbitrary locally Lipschitz continuous extensions of the coefficients in the inherent SDE~\eqref{eq:inherentSDE}
to the whole space $\bR^n\times\bR^n$, and let $(r,v)$ be a global strong solution to \eqref{eq:inherentSDE} with initial conditions $r_0,v_0$.
Then it is not difficult to check that $\bP\big(g(r(t))=0\text{ for all }t\geq0\big)=1$ and, if further $\mu$ is the continuous $\bR^{m}$-valued semimartingale defined by \eqref{eq:defmu}, then $(r,v,\mu)$ is the unique global strong solution to the SDAE~\eqref{eq:mainSDAE}.

\section{Analysis of the numerical scheme}\label{sec:approximation}

We shortly discuss our numerical scheme in Subsection~\ref{subsec:scheme} before we present a detailed analysis of its solvability in Subection~\ref{sec:solvingapproximation}. The results in this section are essential for the convergence analysis in Section~\ref{sec:strongconv}.

\subsection{A half-explicit drift-truncated Euler scheme}\label{subsec:scheme}
Suppose that the assumptions in Section~\ref{subsec:assumptions} are fulfilled and recall from Section~\ref{sec:intro} the half-explicit drift-truncated Euler scheme, which we specify and slightly reformulate as follows: Given $N\in\bR^n$ we are searching for $\bR^n$-valued processes $(r_k)_{k\in\{0,\ldots,N\}}$, $(v_k)_{k\in\{0,\ldots,N\}}$ and $\bR^m$-valued Lagrange multiplier processes $(\kappa_k)_{k\in\{1,\ldots,N\}}$, $(\lambda_k)_{k\in\{1,\ldots,N\}}$ defined recursively by
\begin{subequations}\label{eq:approximation}
	\begin{align}
	\begin{split}\label{eq:approximationup}
	r_{k+1}&=r_k+\eta(r_k,v_k,h)v_k h+M^{-1}\nabla g(r_k)\kappa_{k+1}\\
	g(r_{k+1})&=0
	\end{split}\\
	\begin{split}\label{eq:approximationlow}
	v_{k+1}&=v_k+M^{-1}\Big[\eta(r_k,v_k,h) a(r_k,v_k)h+B(r_k,v_k)\Delta_hw_k+\nabla g(r_k) \lambda_{k+1}\Big]\\
	Dg(r_{k+1})v_{k+1}&=0.
	\end{split}
	\end{align}
\end{subequations}
Here $h:=T/N$ is the step size, $\Delta_hw_k:=w((k+1)h)-w(kh)$ is a Brownian increment, and the truncation function $\eta:\mathbb R^n\times \mathbb R^n \times [0,\infty)\to(0,1]$ is given by
\begin{align}\label{eq:defeta}
\eta(r,v,h):=\min\Big(1\,,\;\frac{C_\eta}{\max(\|v\|,\|v\|^2,\|a(r,v)\|)\,h}\,\Big),
\end{align}
with $C_\eta:=1/(4C_g^3)$ depending on the constant $C_g$ introduced in \eqref{eq:C_g}.
Let us remark that the choice of considering both $\|v\|$ and $\|v\|^2$ in the maximum in \eqref{eq:defeta}, and not solely $\|v\|^2$, is mainly for convenience reasons as it guarantees a precise control of the norm of $\eta(r_k,v_k,h)v_kh$ also for small values of $\|v_k\|$, independently of the step size $h$; alternatively one could impose a suitable step size restriction.

In the following remarks we sketch the main concepts the scheme~\eqref{eq:approximation} is based on. As these are well-known in the respective scientific communities, we refer to the mentioned literature and the references therein for more detailed expositions. The concepts presented in the first two remarks are standard in the context of DAEs.
\begin{remark}[Gear-Gupta-Leimkuhler formulation]
Compared to the SDAE \eqref{eq:mainSDAE} the scheme \eqref{eq:approximation} involves the additional Lagrange multiplier $\kappa_{k+1}$ in \eqref{eq:approximationup} as well as the additional constraint $Dg(r_{k+1})v_{k+1}=0$ in \eqref{eq:approximationlow}.
It is clear that the solution $(r,v,\mu)$ to \eqref{eq:mainSDAE} also solves the SDAE
\begin{equation}\label{eq:GGLSDAE}
	\begin{aligned}
	\dl r(t)&= v(t)\,\dl t+M^{-1}\nabla g(r(t))\,\dl \nu(t)\\
	M\dl v(t)&= a(r(t),v(t))\,\dl t+ B(r(t),v(t))\,\dl w(t)+\nabla g(r(t))\,\dl\mu(t)\\
	g(r(t))&=0\\
	Dg(r(t))v(t)&=0
	\end{aligned}
\end{equation}
if we choose the integrator process $\nu$ to be identically zero.
This is the so-called Gear-Gupta-Leimkuhler (GGL) reformulation of \eqref{eq:mainSDAE}.
Observe that the scheme \eqref{eq:approximation} is a discrete version of \eqref{eq:GGLSDAE}, with $\kappa_{k+1}$ and $\lambda_{k+1}$ corresponding to the infinitesimal increments $\dl\nu(t)$ and $\dl\mu(t)$.
The GGL stabilization is a standard index reduction technique for deterministic mechanical systems, compare \cite{gear:p:1985,hairer:b:1989} and \cite[Chapter~VII]{hairer:b:1991}.
Loosely speaking, it reduces the influence that a perturbation of the constraint has on the Lagrange multiplier, see \cite[Chapter 1]{hairer:b:1989}. 
\end{remark}

\begin{remark}[Half-explicit schemes] 
		The main idea of half-explicit methods for deterministic DAEs is to discretize the differential variables in an explicit manner and only the algebraic variables in an implicit manner, see~\cite{lubich:p:1989,ostermann:p:1993} and \cite[Section~VII.6]{hairer:b:1991}. Thereby, the dimension of the system of equations which has to be solved implicitly in each time step is kept minimal. 
		In our reformulated SDAE~\eqref{eq:GGLSDAE} one can interpret $r$, $v$ as the ``differential variables'' and the formal time derivatives of $\nu$, $\mu$ as the ``algebraic variables''. 
		As a particularly useful consequence of our half-explicit approach, the position $r_{k+1}$ in \eqref{eq:approximation} is stochastically independent of the Brownian increment $\Delta_hw_k$.
\end{remark}

Next, we explain the concept of truncating or taming, which has been used and studied extensively in the last years in the context of SDEs with non-globally Lipschitz continuous coefficients, cf.~\cite{beyn:p:2016,fang:p:2016,higham:p:2002,hutzenthaler:p:2014,hutzenthaler:p:2015,hutzenthaler:p:2012,mao:p:2015,mao:p:2016,sabanis:p:2013}.

\begin{remark}[Truncated and tamed schemes]
	The concept of truncation or taming is used to obtain strongly convergent explicit methods for SDEs whose coefficients do not fulfill global Lipschitz conditions. 
	In the one-dimensional setting it has been shown that the classical Euler-Maruyama scheme may diverge if the coefficient functions are of superlinear polynomial growth, see \cite{hutzenthaler:p:2011} for details.
	The divergence follows from the existence of a sequence of events of exponentially small probability on which the numerical approximations grow at least double-exponentially fast.
	The idea of tamed or truncated schemes is to adjust the coefficient functions in such a way that this growth behaviour is avoided while the adjustment is negligible with high probability.
	In our setting, the presence of the truncation term $\eta(r_k,v_k,h)$ in \eqref{eq:approximation} enables the derivation of suitable moment bounds for the numerical solution, but it also ensures the existence of a numerical solution at all, compare the example in Section~\ref{subsec:pendel}.
	Let us further note that the standard drift-truncated explicit Euler scheme for the inherent SDE~\eqref{eq:inherentSDE} involves the truncation function
	\begin{align*}
	\tilde \eta(r,v,h)=\min\Big(1,\Big[\big(\|v\|^2+\|P_{M}(r)a(r,v)+Dg(r) G_M^{-1}(r)D^2g(r)(v,v)\|^2\big)^{1/2}h\Big]^{-1}\Big)
	\end{align*}
compare \cite[Section~3.6]{hutzenthaler:p:2015}.
The choice \eqref{eq:defeta} for our half-implicit scheme is a simplification of this truncation function, making use of the boundedness properties of the constraint function $g$ from Assumption \ref{ass:g}. This simplification reduces the computational effort significantly since the calculations of $P_M$, $G_M^{-1}$ and $D^2g$ are avoided in each time step.
\end{remark}

\subsection{Solvability of the scheme and Lagrange multiplier estimates}\label{sec:solvingapproximation}

	Here we verify the unique solvability of the half-explicit drift-truncated Euler scheme \eqref{eq:approximation} and derive a suitable decomposition as well as norm estimates for the Lagrange multipliers $\kappa_{k+1}$, $\lambda_{k+1}$.
	Let us stress that, although the main result in this subsection, Theorem~\ref{theo:mainonestep}, is formulated in a deterministic setting, it is tailor-made for the analysis of our stochastic problem, cf.~Remark~\ref{rem:solvingapproximation}.
	Its application to the scheme~\eqref{eq:approximation} is described in Corollary~\ref{cor:applyingonestep}.
	
	For $r,v\in\bR^n$, $h\in(0,\infty)$ and $w\in\bR^\ell$, consider the system of equations
	\begin{subequations}\label{eq:nonlinearsystem}
		\begin{align}
			\begin{split}\label{eq:nonlinearupperconstraint}
				\hat r&=r+\eta(r,v,h)vh+M^{-1}\nabla g(r)\hat \lm\\
				g(\hat r)&=0
			\end{split}\\
			\begin{split}\label{eq:nonlinearlowerconstraint}
				\hat v&=v+M^{-1}\Big[\eta(r,v,h)a(r,v)h+B(r,v)w+\nabla g(r)\hat \lambda\Big]\\
				Dg(\hat r)\hat v&=0
			\end{split}
		\end{align}	
	\end{subequations}
	whose solution consists of the points $(\hat r,\hat v)\in\bR^n\times\bR^n$ and the Lagrange multipliers $(\hat\lm,\hat\lambda)\in\bR^m\times\bR^m$.

	\begin{theorem}[Solvability, Lagrange multiplier estimates]\label{theo:mainonestep}
		Let Assumptions~\ref{ass:g} \linebreak and \ref{ass:aB} be fulfilled, let $C_g\in[1,\infty)$ be the constant given by \eqref{eq:C_g} and set $\Ceta:=1/(4C_g^3)$, $\Clm:=1/(8C_g^4)$.  Let $\eta\colon\mathbb R^n\times \mathbb R^n \times (0,\infty)\to (0,1]$ be a continuous function satisfying 
		\begin{align}\label{eq:assumptioneta}
			&\sup_{(x,y)\in (T\mathcal M)^\varepsilon,\,h>0} \Big(\eta(x,y,h)\,\max\big(\|y\|,\|y\|^2,\|a(x,y)\|\big)\,h\Big) \leq\Ceta.
		\end{align}
		Then there exists an open neighborhood $\cD\subset (T\cM)^\varepsilon\times (0,\infty)$ of $T\cM\times (0,\infty)$ such that, for all $(r,v,h,w)\in\cD\times \bR^\ell$, the system \eqref{eq:nonlinearsystem} has a unique solution
		\begin{align}\label{eq:discretesolution}
			(\hat r,\hat v,\hat \lm,\hat \lambda)=\big(\hat r(r,v,h),\hat v(r,v,h,w),\hat \lm(r,v,h),\hat \lambda(r,v,h,w)\big)
		\end{align}
		in $T\cM\times B_{\Clm}(0)\times\bR^m$, where $B_{\Clm}(0)$ is the open ball in $\bR^m$ with radius $\Clm$ and center zero. 
		The solution \eqref{eq:discretesolution} depends continuously on $(r,v,h,w)\in\cD\times \bR^\ell$,
		and the Lagrange multiplier $\hat \lambda=\hat \lambda(r,v,h,w)\in\bR^m$ can be represented as
		\begin{align}
			\begin{split}\label{eq:nonlineardecompose}
				\hat\lambda(r,v,h,w)&=-G^{-1}_M(r)\Big\{Dg(r)M^{-1}\big[\eta(r,v,h)a(r,v) h+B(r,v)w\big]\\
				&\quad+\eta(r,v,h)D^2g(r)(v,v)h\Big\}+\lambdarem(r,v,h)+\Lambdarem(r,v,h)w
			\end{split}
		\end{align}
		with remainder terms $\lambdarem(r,v,h)\in\bR^m$, $\Lambdarem(r,v,h)\in\bR^{m\times\ell}$ depending continuously on $(r,v,h)\in\cD$.
		Moreover, there exist constants $C,\,p_\lambda,\,p_\Lambda\in(0,\infty)$ 
		such that
		\begin{align}\label{eq:estLambdarem}
			\begin{split}
				\|\hat \lm(r,v,h)\|&\leq C\min\Big(\big(1+\|(r,v)\|^{2}\big)h^2,\;1\,\Big),\\
				\|\lambdarem(r,v,h)\|&\leq C\min\Big(\big(1+\|(r,v)\|^{p_\lambda}\big)h^2,\;1\,\Big),\\
				\|\Lambdarem(r,v,h)\|&\leq C\min\Big(\big(1+\|(r,v)\|^{p_\Lambda}\big)h\,,\;1+\|(r,v)\|\,\Big).
			\end{split}
		\end{align}
		for all $(r,v,h)\in\cD$.	
	\end{theorem}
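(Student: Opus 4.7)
The plan is to decouple system \eqref{eq:nonlinearsystem} into the ``position'' block \eqref{eq:nonlinearupperconstraint}, which determines $(\hat r,\hat\lm)$ and involves neither $w$ nor $\hat\lambda$, and the ``velocity'' block \eqref{eq:nonlinearlowerconstraint}, which is affine in $\hat\lambda$ once $\hat r$ is known. Stage~1 solves the upper block by turning $g(\hat r)=0$ into a fixed-point equation for $\hat\lm$ that I would handle by a uniform contraction argument, packaged as the globalized implicit function theorem of Appendix~\ref{sec:appGIFT}. Stage~2 reads off $\hat\lambda$ from the linear equation arising from $Dg(\hat r)\hat v=0$. Stage~3 extracts the decomposition \eqref{eq:nonlineardecompose} and the norm estimates \eqref{eq:estLambdarem} by Taylor expansion around $r$ using $\hat r-r = \eta(r,v,h)vh + M^{-1}\nabla g(r)\hat\lm$.

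For Stage~1, substitution of $\hat r = r + \eta(r,v,h)vh + M^{-1}\nabla g(r)\lm$ into $g(\hat r)=0$ leads to the equation
\[
F(r,v,h,\lm) := g\bigl(r + \eta(r,v,h)vh + M^{-1}\nabla g(r)\lm\bigr) = 0.
\]
At any $(r_0,v_0)\in T\cM$ one has $F(r_0,v_0,h,0)=0$ and $\partial_\lm F|_{\lm=0} = G_M(r_0)$, which is invertible by Assumption~\ref{ass:g}. Rather than invoking the local IFT (which would only produce solvability for small $h$), I would verify that the map
\[
\Phi_{r,v,h}(\lm) := \lm - G_M^{-1}(r)\,F(r,v,h,\lm) = -G_M^{-1}(r)\bigl[g(r) + \eta(r,v,h)Dg(r)vh + R(r,v,h,\lm)\bigr],
\]
where $R$ is the second-order Taylor remainder of $g$, is a strict self-contraction on $\overline{B_{\Clm}(0)}$ uniformly for $(r,v,h)$ in a suitable open neighborhood $\cD$ of $T\cM\times(0,\infty)$. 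The key estimate is $\|R\|\le \tfrac12 C_g(\|\eta vh\|+C_g\|\lm\|)^2$; combined with $\|\eta(r,v,h)vh\|\le\Ceta$ from \eqref{eq:assumptioneta} and the calibrated values $\Ceta = 1/(4C_g^3)$, $\Clm = 1/(8C_g^4)$, this bounds $\Phi$ by a constant strictly smaller than $\Clm$ on $T\cM$ and gives Lipschitz constant strictly less than $1$ in $\lm$; the estimate extends to a full neighborhood $\cD$ by continuity, allowing $g(r)$ and $Dg(r)v$ to be small but nonzero. Banach's theorem supplies $\hat\lm \in B_{\Clm}(0)$ and hence $\hat r\in\cM$, with continuous dependence on $(r,v,h)$. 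The fixed-point identity $\hat\lm = -G_M^{-1}(r)[g(r)+\eta Dg(r)vh + R]$ yields $\|\hat\lm\|\lesssim\|\eta vh\|^2\le (1+\|v\|^2)h^2$ on $T\cM$ after bounding one factor of $\eta vh$ by $\Ceta$; combined with the trivial bound $\|\hat\lm\|<\Clm$, this is the first line of \eqref{eq:estLambdarem}.

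For Stage~2, substituting $\hat v = v + M^{-1}[\eta a h + Bw + \nabla g(r)\hat\lambda]$ into $Dg(\hat r)\hat v = 0$ yields
\[
Dg(\hat r)M^{-1}\nabla g(r)\,\hat\lambda = -Dg(\hat r)v - Dg(\hat r)M^{-1}\bigl[\eta(r,v,h)a(r,v)h + B(r,v)w\bigr],
\]
whose coefficient matrix is a perturbation of $G_M(r)$ of size $O(\|\hat r-r\|)$, hence invertible after shrinking $\cD$ if necessary. For Stage~3, I would Taylor expand $Dg(\hat r) = Dg(r) + D^2g(r)(\hat r-r,\cdot) + \tfrac12 D^3g(\xi)(\hat r-r,\hat r-r,\cdot)$ and $[Dg(\hat r)M^{-1}\nabla g(r)]^{-1}$ around $r$ and insert $\hat r-r = \eta vh + M^{-1}\nabla g(r)\hat\lm$. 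The leading-order contributions reproduce the three explicit terms of \eqref{eq:nonlineardecompose}: $-G_M^{-1}(r)Dg(r)M^{-1}[\eta ah + Bw]$ from the right-hand side and $-G_M^{-1}(r)\eta D^2g(r)(v,v)h$ from the expansion of $Dg(\hat r)v$ (using $Dg(r)v = 0$ on $T\cM$, approximately on $\cD$). The deterministic higher-order remainders combine into $\lambdarem$, bounded via the estimate on $\hat\lm$ and the polynomial growth of $a$ by $h^2(1+\|(r,v)\|^{p_\lambda})$; the $w$-linear higher-order remainders combine into $\Lambdarem$, bounded both by $h(1+\|(r,v)\|^{p_\Lambda})$ (using $\hat\lm$ and $\eta a h$ factors) and by $1+\|(r,v)\|$ (using only the linear growth of $B$), the two bounds covering the small-$h$ and large-$h$ regimes.

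The principal obstacle is Stage~1: a direct application of the local IFT at each base point $(r_0,v_0,h_0) \in T\cM\times(0,\infty)$ would only give solvability for $h$ in an interval depending on $h_0$, not a neighborhood $\cD$ of all of $T\cM\times(0,\infty)$. The truncation $\eta$ is precisely what makes uniformity possible, by enforcing $\|\eta vh\|\le\Ceta$ regardless of $h$; the calibration of $\Ceta$ and $\Clm$ against $C_g$ leaves essentially no slack in the contraction estimate, which explains the specific constants $\Ceta=1/(4C_g^3)$ and $\Clm=1/(8C_g^4)$ in the statement. A secondary technical point is the sharpness of the bounds \eqref{eq:estLambdarem}, since both the $h$-exponents and the polynomial growth exponents $p_\lambda, p_\Lambda$ feed directly into the consistency, semi-stability and moment-growth conditions verified in Section~\ref{sec:strongconv}.
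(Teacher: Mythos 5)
Your proposal is correct and shares the paper's overall architecture (decouple the position block from the affine velocity block, then Taylor-expand to extract the decomposition and the estimates); Stages 2 and 3 are essentially identical to Steps 4 and 5 of the paper's proof. The genuine difference is in Stage 1. The paper solves $g(\hat r)=0$ by a homotopy: it deforms $F(\tau,\gamma)=g(r+\eta vh+M^{-1}\nabla g(r)\gamma)+(\tau-1)g(r+\eta vh)$ from $\tau=0$ to $\tau=1$ and invokes the globalized IFT of Appendix~\ref{sec:appGIFT}, bounding $\|(D_\gamma F)^{-1}D_\tau F\|$ by $\Clm$ so that the continuation path stays in $B_{\Clm}(0)$; the integral identity $\gamma(\tau)=\int_0^\tau-(D_\gamma F)^{-1}D_\tau F\,\dl s$ then delivers the bound on $\hat\lm$. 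You instead run a simplified-Newton contraction $\Phi(\lm)=-G_M^{-1}(r)[g(r)+\eta Dg(r)vh+R]$ on $\overline{B_{\Clm}(0)}$; the arithmetic checks out ($\|\Phi(\lm)\|\leq C_g\cdot\tfrac12 C_g(\Ceta+C_g\Clm)^2=9/(128C_g^4)<\Clm$ and Lipschitz constant $C_g^3(\Ceta+C_g\Clm)=3/8<1$ on $T\cM\times(0,\infty)$, with slack to absorb the deviation terms on $\cD$). Your route is more elementary, gives uniqueness in $B_{\Clm}(0)$ for free, and handles continuous dependence on $(r,v,h)$ directly from the uniform contraction, thereby avoiding the paper's separate Step 6, which needs a local IFT without differentiability in the parameters because $\eta$ is only continuous. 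What the homotopy buys is a clean uniqueness mechanism in the larger ball $B_{2\Clm}(0)$ via the last clause of Theorem~\ref{theo:globalIFT}, and the integral representation of $\gamma$.

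One point to tighten: "the estimate extends to a full neighborhood $\cD$ by continuity" is not quite enough for the first line of \eqref{eq:estLambdarem}. To preserve the bound $\|\hat\lm\|\leq C(1+\|(r,v)\|^2)h^2$ off the tangent bundle, the deviation terms $g(r)$ and $\eta h\,Dg(r)v$ must themselves be dominated by $\eta(h\wedge\Ceta)^2$, so $\cD$ has to be defined with this $h$-weighted smallness built in (this is exactly the role of the paper's condition \eqref{eq:defD}); mere continuity of the defect would only give smallness uniformly on compact $h$-ranges, not the required quadratic-in-$h$ decay. With $\cD$ so defined, your contraction and remainder estimates go through unchanged.
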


	\begin{proof}
		For the sake of clarity we divide the proof into several steps: After defining a suitable neighborhood $\cD$ of $T\cM\times (0,\infty)$ in Step 1, we show  in Step 2 the existence of a solution $(\hat r,\hat\lm)\in \cM\times B_{\Clm}(0)$ to the system \eqref{eq:nonlinearupperconstraint} as well as the first estimate in \eqref{eq:estLambdarem}. The uniqueness of this solution is verified in Step 3. In Step 4 we show the existence of a unique solution $(\hat v,\hat\lambda)\in\bR^n\times\bR^m$ to the system \eqref{eq:nonlinearlowerconstraint}. The decomposition \eqref{eq:nonlineardecompose} of $\hat\lambda$ and the corresponding estimates in \eqref{eq:estLambdarem} are derived in Step 5. In the final Step 6 we prove that $\hat\lm,\,\lambdarem\in\bR^m$ and $\Lambdarem\in\bR^{m\times\ell}$ depend continuously on $(r,v,h)\in\cD$. For the sake of readability, we use the notation $a\wedge b$ and $a\vee b$ for the minimum and maximum of numbers $a,b\in\bR$.
		\smallskip
					
		\noindent
		{\bf Step 1:}
		We begin by choosing an open neighborhood $\cD\subset (T\cM)^\varepsilon\times (0,\infty)$ of $T\cM\times (0,\infty)$ is such a way that
		\begin{align}\label{eq:defD}
			\sup_{(x,y,h)\in\cD}\max\left(\frac{\big\|g(x)+\eta(x,y,h)h
			Dg(x)y\big\|}{\eta(x,y,h)\big(h\wedge \Ceta\big)^2},\frac{\big\|Dg(x)y\big\|}{\big(h\wedge \Ceta\big)^2}\right)
			< \frac{C_g}{2}.
		\end{align}
		Note that the term $\max(...\,,\,...)$ on the left hand side of \eqref{eq:defD} is continuous as a function of $(x,y,h)\in(T\cM)^\varepsilon\times (0,\infty)$ and equals zero for $(x,y,h)\in T\cM\times (0,\infty)$. 
		Thus we can define $\cD$ as the preimage of the open interval $(0,C_g/2)$ with respect to this function.  
		The inequality \eqref{eq:defD} is used in Step 2 and in Step 5.
		We remark that similar assumptions are used in the analysis of numerical schemes for deterministic DAEs, see, e.g., \cite[Theorem VII.4.1]{hairer:b:1991}. 
		
		Unless stated otherwise, we assume throughout this proof that $(r,v,h,w)\in\cD\times\bR^\ell$ is given and fixed.
		
		\smallskip
		\noindent
		\textbf{Step 2:} 
			In order to find a solution $(\hat r,\hat\lm)$ to \eqref{eq:nonlinearupperconstraint}, we use a homotopy ansatz as commonly used in the DAE context, see for instance \cite[Theorem 4.1]{hairer:b:1989}. It  
			is particularly fruitful in our setting due to the presence of the truncation function~$\eta$.
			Note that the system \eqref{eq:nonlinearupperconstraint} is not influenced by $(\hat v,\hat \lambda)$.
			Our goal is to apply the globalized implicit function theorem (gIFT),  see Theorem \ref{theo:globalIFT}, to the function 
			$F\colon(-\delta,1+\delta)\times\bR^m\to \mathbb R^m$ defined by
			\begin{align}\label{eq:defF}
				F(\tau,\gamma):=g\big(r+\eta(r,v,h) v h+M^{-1}\nabla g(r) \gamma\big)+(\tau-1)g\big(r+\eta(r,v,h) v h\big),
			\end{align}
			where $\delta>0$ is an arbitrary small positive number.
			To this end, we are going to show that, for all $(\tau,\gamma)\in[0,1]\times B_{2\Clm}(0)$ with $F(\tau,\gamma)=0$, the Jacobian matrix $D_\gamma F(\tau,\gamma)\in\bR^{m\times m}$ is invertible and
			\begin{align}\label{eq:step2_1}
				\begin{split}
					\big\|(D_\gamma F(\tau,\gamma))^{-1}D_\tau F(\tau,\gamma)\big\|
					&< C_g^2\big(\eta(r,v,h)(\Ceta\wedge h)^2+\eta(r,v,h)^2\|v\|^2 h^2\big)\\
					&\leq 2C_g^2\Ceta^2=\Clm
				\end{split}
			\end{align}
			Since $F(0,0)=0$, Theorem~\ref{theo:globalIFT} (gIFT) then implies that there exists a continuously differentiable function $\gamma\colon[0,1]\to B_{\Clm}(0)\subset\bR^m$ such that $\gamma(0)=0$ and $F(\tau,\gamma(\tau))=0$ for all $\tau\in[0,1]$. 
			Thus, we obtain a solution $(\hat r,\hat\lm)\in\cM\times B_{\Clm}(0)$ to \eqref{eq:nonlinearupperconstraint} by setting $\hat\lm=\hat\lm(r,v,h):=\gamma(1)$. 
			In view of \eqref{eq:step2_1} and the identity $\gamma(\tau)=\int_0^\tau -\big(D_\gamma F(s,\gamma(s))\big)^{-1} D_\tau F(s,\gamma(s))\,\dl s$ we also have the inequality
			\begin{align}\label{eq:estihatnu}
				\|\hat \lm(r,v,h)\|< C_g^2 \big(\eta(r,v,h)(\Ceta\wedge h)^2 +\eta(r,v,h)^2\|v\|^2 h^2\big)\leq \Clm,
			\end{align}
			which implies the first estimate in \eqref{eq:estLambdarem} and will be useful later on.
			In the sequel, let $(\tau,\gamma)\in[0,1]\times B_{2\Clm}(0)$ with $F(\tau,\gamma)=0$ be fixed.
			
			We first show that $D_\gamma F(\tau,\gamma)\in\bR^{m\times m}$ is invertible and estimate the norm of its inverse.
			Using the chain rule, a first order Taylor expansion of $Dg$ at $r$, and the identity $G_M(r)=Dg(r)M^{-1}\nabla g(r)\in\bR^{m\times m}$, we obtain
			\begin{align}\label{eq:nonlinearestidefxi}
				\begin{split}
					D_\gamma F(\tau,\gamma)
					&=Dg\big(r+\eta(r,v,h)v h+M^{-1}\nabla g(r) \gamma\big)M^{-1}\nabla g(r)\\
					&= G_M(r)+D^2g(\chi)\big(\eta(r,v,h) v h+M^{-1}\nabla g(r)\gamma,M^{-1}\nabla g(r)\bbullet\big)
				\end{split}
			\end{align}
			for some $\chi=r+\alpha(\eta(r,v,h)v h+M^{-1}\nabla g(r)\gamma)\in\bR^n$ with $\alpha\in[0,1]$.
			Here and below we denote for $x,y\in\mathbb R^n$ and $B\in\bR^{n\times m}$ by $D^2g(x)(y,B\bbullet)\in\bR^{m\times m}$ the matrix corresponding to the linear operator $\bR^m\ni z\mapsto D^2g(x)(y,Bz)\in\bR^m$.	
			Note that $G_M(r)\in\bR^{m\times m}$ is invertible since $r\in\cM^\varepsilon$. 
			Thus, \eqref{eq:nonlinearestidefxi} implies that $D_\gamma F(\tau,\gamma)\in\bR^{m\times m}$ is invertible if, and only if, the matrix
			\begin{align}\label{eq:defAstep2}
				\begin{split}
					G^{-1}_M(r)D_\gamma F(\tau,\gamma)&=\Id+G_M^{-1}(r)D^2g(\chi)\big(\eta(r,v,h) v h+M^{-1}\nabla g(r)\gamma,M^{-1}\nabla g(r)\bbullet\big)\\&=:\Id+A(r,v,h,\gamma)
				\end{split}
			\end{align}
			is invertible. 
			Recall that if $A\in\mathbb R^{m\times m}$ is such that $\|A\|_{\sL(\bR^m)}<1$, then $\Id+A\in\bR^{m\times m}$ is invertible with inverse given by the Neumann series
			$(\Id+A)^{-1}=\sum_{i=0}^\infty (-A)^i$.
			Due to our assumptions on $g$, $\eta$ and since $\|\gamma\|<2\Clm$, we have
			\begin{align}\label{eq:estAstep2}
				\begin{split}
					\|A(r,v,h,\gamma)\|_{\sL(\bR^m)}
					&\leq 
					\big\|G_M^{-1}(r)D^2g(\chi)\big(\eta(r,v,h) v h,M^{-1}\nabla g(r)\bbullet\big)\big\|_{\sL(\bR^m)}\\
					& \quad
					+\big\|G_M^{-1}(r)D^2g(\chi)\big(M^{-1}\nabla g(r)  \gamma,M^{-1}\nabla g(r)\bbullet\big)\big\|_{\sL(\bR^m)}\\
					&\leq
					C_g^3\eta(r,v,h)\|v\|h+C_g^4\|\gamma\|\\
					&<
					C_g^3\Ceta+2C_g^4\Clm=\frac 12.
				\end{split}
			\end{align}
			As a consequence, $D_\gamma F(\tau,\gamma)\in\bR^{m\times m}$ is invertible and
			\begin{align}\label{eq:estDgammaF}
				\begin{split}
					\big\|(D_\gamma F(\tau,\gamma))^{-1}\big\|_{\sL(\bR^m)}
					&=
					\Big\|\sum_{i=0}^\infty\big(\!-\!A(r,v,h,\gamma)\big)^i G_M^{-1}(r)\Big\|_{\sL(\bR^m)}\\
					&<
					\sum_{i=0}^\infty\frac{1}{2^i}\big\|G_M^{-1}(r)\big\|_{\sL(\bR^m)}\leq 2C_g.
				\end{split}
			\end{align}
			
			Next, we estimate the norm of $D_\tau F(\tau,\gamma)=\frac{\partial}{\partial\tau}F(\tau,\gamma)\in\bR^m$. 
			A second order Taylor expansion of $g$ at $r$ yields
			\begin{align*}
				D_\tau F(\tau,\gamma)&=g\big(r+\eta(r,v,h) v h\big)\\
				&=g(r)+Dg(r)\eta(r,v,h)vh+\frac 12 D^2g(\zeta)\big(\eta(r,v,h)vh,\eta(r,v,h)vh\big)
			\end{align*}
			for some 
			$\zeta=r+\beta\,\eta(r,v,h) v h\in\bR^n$ 
			with $\beta\in[0,1]$.
			Therefore, using also the estimate \eqref{eq:defD} from the definition of $\cD$, we have
			\begin{align}\label{eq:estDtauF}
				\|D_\tau F(\tau,\gamma)\|
				&<  \frac{C_g}{2}\eta(r,v,h) (\Ceta\wedge h)^2+\frac{C_g}{2}\eta(r,v,h)^2\|v\|^2h^2\leq C_g \Ceta^2.
			\end{align}
			The combination of \eqref{eq:estDgammaF} and \eqref{eq:estDtauF} finally yields \eqref{eq:step2_1}.
			We postpone the verification of the continuous dependence of $\hat \lm$ on $(r,v,h)$ to Step 6 since, due the lack of differentiability of $\eta$, it involves the implicit function theorem in the version of Theorem \ref{theo:localIFT} as well.
		\smallskip
		
		\noindent
		\textbf{Step 3:} 
			Here we show that $(\hat r,\hat\lm)$ from Step 1 is the \emph{only} solution to \eqref{eq:nonlinearupperconstraint} in $\bR^n\times B_{\Clm}(0)\subset\bR^n\times\bR^m$. An equivalent formulation in terms of the function $F$ from \eqref{eq:defF} is that $\hat\lm$ is the only element in $B_{\Clm}(0)\subset\bR^m$ satisfying
			$
			F(1,\hat\lm)=g\big(r+\eta(r,v,h) v h+\nabla g(r)\hat\lm\big)=0.
			$
			By the last statement of Theorem \ref{theo:globalIFT} (gIFT), it therefor suffices to show that
			\begin{align}\label{eq:step3_1}
				F(0,\gamma)\neq0\;\text{ for all }\gamma\in\bR^m\setminus\{0\}\text{ with }\|\gamma\|<2\Clm.
			\end{align}
			To this end, let $\gamma\in\bR^m\setminus\{0\}$ with $\|\gamma\|<2\Clm$ be fixed and assume that $F(0,\gamma)=0$.
			Using the definition \eqref{eq:defF} of $F$, the mean value theorem applied to $g$, a first order Taylor expansion of $Dg$ at $r$, and the identity $G_M(r)=Dg(r)M^{-1}\nabla g(r)$, we obtain
			\begin{align*}
				0
				&=
				g\big(r+\eta(r,v,h) v h+M^{-1}\nabla g(r)\gamma\big)-g\big(r+\eta(r,v,h) v h\big)\\
				&=
				Dg\big(r+\eta(r,v,h) v h+\alpha M^{-1}\nabla g(r)\gamma\big)M^{-1}\nabla g(r)\gamma\\
				&=
				G_M(r)\gamma+ D^2g(\xi)\big(\eta(r,v,h) v h+\alpha M^{-1}\nabla g(r)\gamma,M^{-1}\nabla g(r)\gamma\big)
			\end{align*}
			for some $\alpha\in[0,1]$ and 
			$\xi=r+\beta\big(\eta(r,v,h) v h+\alpha M^{-1}\nabla g(r)\gamma\big)$ with $\beta\in[0,1]$.
			Since $G_M(r)\in\bR^{m\times m}$ is invertible we have
			\begin{align*}
				\gamma=-G_M^{-1}(r)D^2g(\xi)\big(\eta(r,v,h) v h+\alpha M^{-1}\nabla g(r)\gamma,M^{-1}\nabla g(r)\gamma\big)
			\end{align*}
			As a consequence, the bounds $\|G_M^{-1}(r)\|_{\sL(\bR^m)}\leq C_g$, $\|M^{-1}\nabla g(r)\|_{\sL(\bR^m,\bR^n)}\leq C_g$ and $\|D^2g(\xi)\|_{\sL^{(2)}(\bR^n,\bR^m)}\leq C_g$, following from Assumption \ref{ass:g} and \eqref{eq:C_g}, together with the assumption that $\|\gamma\|< 2\Clm$ imply
			\begin{align*}
				\|\gamma\|
				\leq 
				C_g^3 \big(\Ceta+C_g\|\gamma\|\big)\,\|\gamma\|
				\leq
				\Big(\frac14+2C_g^4\Clm\Big)\|\gamma\|
				\leq 
				\frac{\|\gamma\|}2.
			\end{align*}
			This readily yields $\gamma=0$
			and hence \eqref{eq:step3_1}.
		\smallskip
		
		\noindent
		\textbf{Step 4:} 
			The next step is to show that there exists a unique solution to the system \eqref{eq:nonlinearlowerconstraint} given that \eqref{eq:nonlinearupperconstraint} has been solved as in Step 1. 
			It suffices to show that, given 
			$(\hat r,\hat\lm)\in
			\bR^n\times\bR^m$ from Step 1, there exists a unique $\hat\lambda=\hat\lambda(r,v,h,w)\in\bR^m$ satisfying
			\begin{align}\label{eq:lowereqmaintheo}
				\begin{split}
					0
					=Dg(\hat r)\hat v&=Dg(\hat r)v+Dg(\hat r)M^{-1}\big[\eta(r,v,h)a(r,v)h+B(r,v) w\big]\\
					&\quad+Dg(\hat r)M^{-1}\nabla g(r)\hat \lambda.
				\end{split}
			\end{align}
			By \eqref{eq:nonlinearestidefxi} and \eqref{eq:defAstep2} with $\gamma=\hat\lm\in B_{\Clm}(0)\subset\bR^m$ we have
			\begin{align}\label{eq:step4_1}
				Dg(\hat r)M^{-1}\nabla g(r)
				=
				D_\gamma F(1,\hat\lm)
				=
				G_M(r)\big(\Id+A(r,v,h,\hat\lm)\big)\;\;(\in\bR^{m\times m}),
			\end{align}
			and this matrix is invertible. 
			Indeed, $G_M(r)\in\bR^{m\times m}$ is invertible since $r\in\cM^\varepsilon$, and $\Id+A(r,v,h,\hat\lm)\in\bR^{m\times m}$ is invertible due to \eqref{eq:estAstep2}, with inverse given by the Neumann series 
			\begin{align}\label{eq:step4_2}
				\big(\Id+A(r,v,h,\hat\lm)\big)^{-1}=\sum_{i=0}^\infty\big(\!-\!A(r,v,h,\hat\lm)\big)^i\;\;(\in\bR^{m\times m}).
			\end{align}
			Combining \eqref{eq:lowereqmaintheo} and \eqref{eq:step4_1} thus yields
			\begin{align}\label{eq:step4_defhatlambda}
				\begin{split}
					\hat\lambda
					&=
					-\big(\Id+A(r,v,h,\hat\lm)\big)^{-1}G_M^{-1}(r)\\
					&\quad\times
					\Big\{Dg(\hat r)v+Dg(\hat r)M^{-1}\big[\eta(r,v,h)a(r,v)h+B(r,v) w\big]\Big\}.
				\end{split}
			\end{align}
			\smallskip
			
			\noindent
			{\bf Step 5:} 
			Here we define and estimate the remainder terms $\lambdarem(r,v,h)\in\bR^m$ and $\Lambdarem(r,v,h)\in\bR^{m\times\ell}$ in the decomposition \eqref{eq:nonlineardecompose} of the Lagrange multiplier $\hat\lambda=\hat\lambda(r,v,h,w)\in\bR^{m}$ given by \eqref{eq:step4_defhatlambda}.
			
			Concerning the terms in the second line of \eqref{eq:step4_defhatlambda}, a second order Taylor expansion of $Dg$ at $r$ yields
			\begin{align}\label{eq:step5_1}
				\begin{split}
					Dg(\hat r)v&=Dg(r)v+D^2g(r)\big(\eta(r,v,h) vh+M^{-1}\nabla g(r)\hat \lm,\,v\big)\\
					&\quad+\frac 12D^3g(\vartheta)\Big(\eta(r,v,h) v h+M^{-1}\nabla g(r)\hat \lm,\,\eta(r,v,h) v h+M^{-1}\nabla g(r) \hat \lm,\,v\Big),
				\end{split}
			\end{align}
			where $\vartheta=r+\beta \big(\eta(r,v,h) vh+M^{-1}\nabla g(r)\hat \lm\big)\in\bR^n$ for some $\beta\in[0,1]$, and a first order Taylor expansion of $Dg$ at $r$ gives
			\begin{align}\label{eq:step5_2}
				Dg(\hat r)=Dg(r)+D^2g(\chi)\big(\eta(r,v,h) v h+M^{-1}\nabla g(r)\hat \lm,\bbullet\big)\;\;(\in\bR^{m\times n}),
			\end{align}
			where $\chi=r+\alpha(\eta(r,v,h)v h+M^{-1}\nabla g(r)\hat\lm)\in\bR^n$, $\alpha\in[0,1]$, is as in \eqref{eq:nonlinearestidefxi} with $\gamma=\hat\lm$.
			Plugging \eqref{eq:step5_1}, \eqref{eq:step5_2} into \eqref{eq:step4_defhatlambda} and rearranging the involved terms shows that we can define $\lambdarem=\lambdarem(r,v,h)\in\bR^m$ and $\Lambdarem=\Lambdarem(r,v,h)\in\bR^{m\times\ell}$ fulfilling \eqref{eq:nonlineardecompose} by setting
			\begin{subequations}\label{eq:defLambdarem}
			\begin{align}
				\begin{split}
					\lambdarem&:=\Big(\Id-\big(\Id+A(r,v,h,\hat \lm)\big)^{-1}\Big)G_M^{-1}(r)\Big\{Dg(r)M^{-1}\eta(r,v,h) a(r,v) h\\
					&\qquad+D^2g(r)\big(\eta(r,v,h) vh,v\big)\Big\}\\
					&\quad-\big(\Id+A(r,v,h,\hat \lm)\big)^{-1}G_M^{-1}(r)\Big\{Dg(r)v+D^2g(r)\big(M^{-1}\nabla g(r)\hat \lm,v\big)\\
					&\qquad+D^2g(\chi)\Big(\eta(r,v,h) v h+M^{-1}\nabla g(r)\hat \lm,\,M^{-1}\eta(r,v,h) a(r,v) h\Big)\\
					&\qquad+\frac12D^3g(\vartheta)\Big(\eta(r,v,h) v h+M^{-1}\nabla g(r)\hat\lm,\,\eta(r,v,h) v h+M^{-1}\nabla g(r)\hat\lm,\,v\Big)\Big\},
				\end{split}\\
				\begin{split}
					\Lambdarem&:=\Big(\Id-\big(\Id+A(r,v,h,\hat \lm)\big)^{-1}\Big)G_M^{-1}(r)Dg(r)M^{-1} B(r,v)\\
					&\quad-\big(\Id+A(r,v,h,\hat \lm)\big)^{-1}G_M^{-1}(r)\\
					&\qquad\times D^2g(\chi)\Big(\eta(r,v,h) v h+M^{-1}\nabla g(r)\hat \lm,\,M^{-1}B(r,v)\bbullet\Big).
				\end{split}
			\end{align}
			\end{subequations}
			Next, note that the combination of \eqref{eq:step4_2}, the estimate \eqref{eq:estAstep2} of $\|A(r,v,h,\gamma)\|_{\sL(\bR^m)}$ with $\gamma=\hat\lm$ and  the estimate \eqref{eq:estihatnu} of $\|\hat\lm\|=\|\hat\lm(r,v,h)\|$  yields 
			\begin{align}\label{eq:ergaenzung1}
				\big\|\big(\Id + A(r,v,h,\hat\lm)\big)^{-1}\big\|_{\sL(\bR^m)}&\leq 2
			\end{align}
			as well as
			\begin{align}\label{eq:ergaenzung2}
			\begin{split}
				\big\|\Id-\big(\Id+A(r,v,h,\hat\kappa)\big)^{-1}\big\|_{\sL(\bR^m)}
				&\leq
				\|A(r,v,h,\hat\lm)\|_{\sL(\bR^m)}\sum_{i=1}^\infty\frac1{2^{i-1}}\\
				&\leq\frac52C_g^3\eta(r,v,h)\big((\Ceta\wedge h)+\|v\|h\big).
				\end{split}
			\end{align}
			In \eqref{eq:ergaenzung2} we use the fact that \eqref{eq:estihatnu} and \eqref{eq:estAstep2} imply 
			\begin{align*}
			\|\hat\kappa\|&<C_g^2\Ceta\eta(r,v,h)\big((\Ceta\wedge h)+\|v\|h\big)\\
			&=\frac1{4C_g}\eta(r,v,h)\big((\Ceta\wedge h)+\|v\|h\big),\\
			\|A(r,v,h,\hat\kappa)\|_{\sL(\bR^m)}&\leq C_g^3\eta(r,v,h)\|v\|h+\frac14C_g^3\eta(r,v,h)\big((\Ceta\wedge h)+\|v\|h\big)\\
			&\leq\frac54C_g^3\eta(r,v,h)\big((\Ceta\wedge h)+\|v\|h\big).
			\end{align*}
			Combining \eqref{eq:defLambdarem}, \eqref{eq:ergaenzung1}, \eqref{eq:ergaenzung2} and using
			the estimate $\|Dg(r)v\|\leq\frac{C_g}2(\Ceta\wedge h)^2$ due to \eqref{eq:defD}, we obtain
			\begin{subequations}\label{eq:addestilambda}
			\begin{align}
				\begin{split}
					\|\lambdarem\|
					&\leq
					C'\big\|\Id-(\Id+A)^{-1}\|_{\sL(\bR^m)}\Big\{\eta(r,v,h)\|a(r,v)\|h+\eta(r,v,h)\|v\|^2h\Big\}\\
					&\quad+C'\Big\{\|Dg(r)v\|+\|\hat\lm\|\|v\|+\Big(\eta(r,v,h)\|v\|h+\|\hat\lm\|\Big)\eta(r,v,h)\|a(r,v)\|h\\
					&\qquad +\Big(\eta(r,v,h)\|v\|h+\|\hat\lm\|\Big)^2\|v\|\Big\}\\
					&\leq C''\Big((\Ceta\wedge h)^2
					+ 
					\eta(r,v,h)\|a(r,v)\|h(\Ceta\wedge h) + \eta^2(r,v,h)\|v\|\|a(r,v)\|h^2\\
					&\quad+ 
					\eta^3(r,v,h)\|v\|^2\|a(r,v)\|h^3
					+
					\eta(r,v,h)\big(\|v\|+\|v\|^2\big)h(\Ceta\wedge h)^2\\
					&\quad+
					\eta^2(r,v,h)\|v\|^3h^2 + \eta^3(r,v,h)\big(\|v\|^3+\|v\|^4\big)h^3 + \eta^4(r,v,h)\|v\|^5h^4\Big)
				\end{split}\\
				\begin{split}
					\|\Lambdarem w\|
					&\leq
					C'\Big(\big\|\Id-(\Id+A)^{-1}\|_{\sL(\bR^m)}+\eta(r,v,h)\|v\|h+\|\hat\lm\|\Big)\|B(r,v)w\|\\
					&\leq
					C''\Big((\Ceta\wedge h)+\eta(r,v,h)\|v\|h+ \eta^2(r,v,h)\|v\|^2h^2\Big)\|B(r,v)w\|,
				\end{split}
			\end{align}
			\end{subequations}
			where $A:=A(r,v,h,\hat\lm)\in\bR^{m\times m}$ and where $C',\,C''\in(0,\infty)$ are suitable constants that do not depend on $(r,v,h,w)\in\cD\times\bR^\ell
			$.
			Now the second and third estimate in \eqref{eq:estLambdarem} follow directly from our assumptions on $g$, $a$, $B$ and $\eta$.
		
		\smallskip
		
			\noindent
			\textbf{Step 6:}	
			We finally show that the 
			Lagrange multiplier $\hat\kappa(r,v,h)\in\bR^m$
			and the remainder terms $\lambdarem(r,v,h)\in\bR^m$, $\Lambdarem(r,v,h)\in\bR^{m\times\ell}$ in \eqref{eq:nonlineardecompose} depend continuously on $(r,v,h)\in\cD$.
			To this end, we use a slight generalization of the classical local Implicit Function Theorem, see Theorem~\ref{theo:localIFT}, and consider the function
			$\tilde F\colon \cD\times \bR^m\to \bR^m$ defined by
			\begin{align*}
				\tilde F(\tilde r,\tilde v,\tilde h,\gamma):=g\big(\tilde r+\eta(\tilde r,\tilde v,\tilde h)vh+\nabla g(\tilde r)\gamma\big).
			\end{align*}
			Obviously, $\tilde F$ is a continuous function which is continuously differentiable in the last argument.
			As in Step 1 one sees that $D_\gamma\tilde F(r,v,h,\hat\kappa(r,v,h))\in\bR^{m\times m}$ is invertible.
			Since $\hat\kappa(r,v,h)\in B_{\Clm}(0)$ and $\tilde F(r,v,h,\hat\kappa(r,v,h))=0$, Theorem~\ref{theo:localIFT} ensures the existence of a continuous function $\tilde\gamma\colon V_1\to B_{\Clm}(0)\subset\bR^m$, defined on an open neighborhood $V_1\subset \cD$ of $(r,v,h)$, such that $\tilde\gamma(r,v,h)=\hat\kappa(r,v,h)$ and $\tilde F(\tilde r,\tilde v,\tilde h,\tilde\gamma(\tilde r,\tilde v,\tilde h))=0$ for all $(\tilde r,\tilde v,\tilde h)\in V_1$.
			Moreover, by Step 3 we know for all $(\tilde r,\tilde v,\tilde h)\in V_1$ that $\hat\kappa(\tilde r,\tilde v,\tilde h)$ is the only element in $B_{\Clm}(0)\subset\bR^m$ satisfying $\tilde F(\tilde r,\tilde v,\tilde h,\hat\kappa(\tilde r,\tilde v,\tilde h))=0$.
			Hence, $\tilde\gamma(\tilde r,\tilde v,\tilde h)=\hat\kappa(\tilde r,\tilde v,\tilde h)$ for all $(\tilde r,\tilde v,\tilde h)\in V_1$.
			We obtain that the Lagrange multiplier $\hat{\lm}(r,v,h)\in B_{\Clm}(0)\subset\bR^m$ depends continuously on $(r,v,h)\in\cD$.
			
			Concerning $\lambdarem(r,v,h)$, $\Lambdarem(r,v,h)$,			
			observe that $G_M^{-1}(r)\in\bR^{m\times m}$ depends continuously on $r\in\cM^\varepsilon$ as a consequence of Assumption~\ref{ass:g}. Moreover, $(\Id+A(r,v,h,\gamma))^{-1}\in\bR^{m\times m}$ depends continuously on $(r,v,h,\gamma)\in\cD\times B_{\Clm}(0)\subset \cD\times\bR^m$ as a consequence of the identity $(\Id+A(r,v,h,\gamma))^{-1}=\sum_{i=0}^\infty(-A(r,v,h,\gamma))^i$, the estimate \eqref{eq:estAstep2}, and the definition of $A(r,v,h,\gamma)\in\bR^{m\times m}$ in \eqref{eq:defAstep2}.
			Thus, replacing the Lagrange remainder terms appearing due to Taylor expansions on the right hand side of \eqref{eq:defLambdarem} by their corresponding integral representations,  it follows that $\lambdarem(r,v,h)\in\bR^m$ and $\Lambdarem(r,v,h)\in\bR^{m\times\ell}$ depend continuously on $(r,v,h)\in\cD$ as well.
	\end{proof}
			
		\begin{remark}
			The proof of Theorem~\ref{theo:mainonestep} shows a bit more than stated in the theorem: If we weaken  \eqref{eq:assumptioneta} to 
			$\sup_{(x,y)\in (T\mathcal M)^\varepsilon,\,h>0} \big(\eta(x,y,h) \|y\|h\big) \leq\Ceta$, 
			then the statement remains valid if we replace 
			the second and third inequality in \eqref{eq:estLambdarem} by
			$\|\lambdarem(r,v,h)\|\leq C(1+\|(r,v)\|^{p_\lambda})h^2$ and 
			$\|\Lambdarem(r,v,h)\|\leq C(1+\|(r,v)\|^{p_\Lambda})h$.
			In order to obtain the full estimate \eqref{eq:estLambdarem}, it is sufficient to additionally assume that
			$\sup_{(x,y)\in (T\mathcal M)^\varepsilon,\,h>0} \big(\eta(x,y,h)\max(\|y\|^2,\|a(x,y)\|)\,h\big)$ is finite.
		\end{remark}
		
		The following consequence of Theorem~\ref{theo:mainonestep} is immediate.
		\begin{corollary}\label{cor:applyingonestep}
			Let the assumptions in Section~\ref{subsec:assumptions} be fulfilled,
			let $T\in(0,\infty)$, $N\in\mathbb N$ and $h:=T/N$. 
			Let $C_g\in[1,\infty)$ be the constant given by \eqref{eq:C_g} and set $\Clm:=1/(8C_g^4)$
			Then there exists unique $(\cF_{kh})_{k\in\{0,\ldots,N\}}$-adapted processes 
			$(r_k)_{k\in\{0,\ldots,N\}}$, $(\lm_k)_{k\in\{1,\ldots,N\}}$, $(v_k)_{k\in\{0,\ldots,N\}}$ and $(\lambda_k)_{k\in\{1,\ldots,N\}}$
			with values in $\bR^n$, $B_{\Clm}(0)\subset\bR^m$, $\bR^n$ and $\bR^m$, respectively, 
			solving the the half-explicit drift-truncated Euler scheme \eqref{eq:approximation} with truncation function $\eta$ given by \eqref{eq:defeta}.
			
			Moreover, there exist $(\cF_{kh})_{k\in\{0,\ldots,N\}}$-predictable processes $(\lambdare_k)_{k\in\{1,\ldots,N\}}$ and $(\Lambdare_k)_{k\in\{1,\ldots,N\}}$ with values in $\bR^m$ and $\bR^{m\times\ell}$, respectively, such that
			\begin{align}
				\begin{split}\label{eq:nonlineardecompose_stoch}
					\lambda_{k+1}&=-G^{-1}_M(r_k)\Big\{Dg(r_k)M^{-1}\big[\eta(r_k,v_k,h)\,a(r_k,v_k) h+B(r_k,v_k)\Delta_hw_k\big]\\
					&\quad+\eta(r_k,v_k,h)\,D^2g(r_k)(v_k,v_k)h\Big\}+\lambdare_{k+1}+\Lambdare_{k+1}\Delta_hw_k
				\end{split}
			\end{align}
			for all $k\in\{0,\ldots,N-1\}$.
		\end{corollary}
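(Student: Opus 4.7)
The plan is to apply Theorem~\ref{theo:mainonestep} inductively, one time step at a time, using the Brownian increments $\Delta_h w_k$ as the noise input $w$.

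\textbf{First}, I would verify that the truncation function $\eta$ defined in \eqref{eq:defeta} satisfies the standing hypothesis \eqref{eq:assumptioneta} of Theorem~\ref{theo:mainonestep} with $\Ceta=1/(4C_g^3)$. Since $\eta(r,v,h)\leq\Ceta\,\big[\max(\|v\|,\|v\|^2,\|a(r,v)\|)\,h\big]^{-1}$ whenever the max is positive, and $\eta(r,v,h)\leq 1$ otherwise, the product $\eta(r,v,h)\max(\|v\|,\|v\|^2,\|a(r,v)\|)h$ is bounded by $\Ceta$, so \eqref{eq:assumptioneta} holds. Hence Theorem~\ref{theo:mainonestep} supplies continuous solution maps $\hat r,\hat v,\hat\lm,\hat\lambda$ and continuous remainder maps $\lambdarem,\Lambdarem$ on a fixed open neighbourhood $\cD\supset T\cM\times(0,\infty)$.

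\textbf{Second}, I would set up an induction on $k\in\{0,\ldots,N\}$ with the hypothesis that $(r_k,v_k)$ is $\cF_{kh}$-measurable and $(r_k,v_k)(\omega)\in T\cM$ for $\bP$-almost every~$\omega$. The base case $k=0$ is Assumption~\ref{ass:init}. For the inductive step, fix the deterministic step size $h=T/N>0$ and note that $(r_k,v_k,h)\in\cD$ almost surely; apply Theorem~\ref{theo:mainonestep} pointwise in $\omega$ to the data $(r,v,h,w)=(r_k(\omega),v_k(\omega),h,\Delta_hw_k(\omega))$ and define
\begin{equation*}
r_{k+1}:=\hat r(r_k,v_k,h),\quad \lm_{k+1}:=\hat\lm(r_k,v_k,h),\quad v_{k+1}:=\hat v(r_k,v_k,h,\Delta_hw_k),\quad \lambda_{k+1}:=\hat\lambda(r_k,v_k,h,\Delta_hw_k).
\end{equation*}
Continuity of these maps combined with the $\cF_{kh}$-measurability of $(r_k,v_k)$ and the $\cF_{(k+1)h}$-measurability of $\Delta_hw_k$ gives $\cF_{(k+1)h}$-measurability of all four quantities, and in fact $r_{k+1},\lm_{k+1}$ are already $\cF_{kh}$-measurable. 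By construction $g(r_{k+1})=0$ and $Dg(r_{k+1})v_{k+1}=0$, so $(r_{k+1},v_{k+1})\in T\cM$ almost surely, closing the induction.

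\textbf{Third}, uniqueness is inherited from Theorem~\ref{theo:mainonestep}: any $(\cF_{kh})$-adapted solution of \eqref{eq:approximation} with $\lm_k\in B_{\Clm}(0)$ must coincide $\omega$-wise with the one constructed, because on the event that the previous step agrees, the Theorem asserts that there is a \emph{unique} quadruple in $T\cM\times B_{\Clm}(0)\times\bR^m$ satisfying \eqref{eq:nonlinearsystem}. A simple induction on $k$ then yields global uniqueness.

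\textbf{Finally}, the decomposition \eqref{eq:nonlineardecompose_stoch} is obtained by setting $\lambdare_{k+1}:=\lambdarem(r_k,v_k,h)$ and $\Lambdare_{k+1}:=\Lambdarem(r_k,v_k,h)$ and invoking the representation \eqref{eq:nonlineardecompose} of $\hat\lambda$ pointwise in $\omega$. Since $\lambdarem,\Lambdarem$ are continuous in $(r,v,h)\in\cD$ and $(r_k,v_k)$ is $\cF_{kh}$-measurable, both $\lambdare_{k+1}$ and $\Lambdare_{k+1}$ are $\cF_{kh}$-measurable, which is precisely $(\cF_{kh})$-predictability of the discrete-time processes indexed by $k+1$. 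I do not anticipate any real obstacle here; the only point requiring mild care is the bookkeeping that the $\omega$-wise application of Theorem~\ref{theo:mainonestep} indeed yields measurable (in fact continuous compositions of measurable) random variables, which is why the continuity assertion in Theorem~\ref{theo:mainonestep} is crucial.
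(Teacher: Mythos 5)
Your proposal is correct and follows exactly the route the paper intends: the paper states the corollary as an immediate consequence of Theorem~\ref{theo:mainonestep}, and your write-up simply makes explicit the pointwise, stepwise application of that theorem (with $w=\Delta_hw_k$), the measurability via continuity of the solution and remainder maps, and the identification $\lambdare_{k+1}=\lambdarem(r_k,v_k,h)$, $\Lambdare_{k+1}=\Lambdarem(r_k,v_k,h)$ yielding predictability. No gaps.
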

		
		We end this section with two further remarks.
		
		\begin{remark}\label{rem:solvingapproximation}
			The results in Theorem~\ref{theo:mainonestep} and Corollary~\ref{cor:applyingonestep} are tailor-made for our strong convergence analysis in Section~\ref{sec:strongconv} below.
			For instance, the right hand side in the decomposition \eqref{eq:nonlineardecompose_stoch} can be considered as an approximation of a small increment of the Lagrange multiplier process $(\mu(t))_{t\in[0,T]}$ in Theorem~\ref{theo:solutionandinherent}, perturbed by the remainder terms $\lambdare_{k+1}$ and $\Lambdare_{k+1}\Delta_hw_k$.
			Both the drift part $\lambdare_{k+1}$ and the diffusion part $\Lambdare_{k+1}\Delta_hw_k$ are controlled by suitable powers of $h$ in the estimate \eqref{eq:estLambdarem}.
			Moreover, the crucial moment growth estimate in Lemma \ref{cor:strongrate} is heavily based on the fact that $\Lambdare_{k+1}$ is $\cF_{kh}$-measurable and hence independent of $\Delta_hw_k$, compare the derivation of the auxiliary estimate \eqref{eq:estialphanew}. 
		\end{remark}

	\begin{remark}
		While Theorem~\ref{theo:mainonestep} and Corollary~\ref{cor:applyingonestep} ensure both existence and uniqueness of a solution to our scheme, there may exist several solutions if we omit the boundedness condition on the first Lagrange multiplier.
		This issue is well-known in the literature, cf.~\cite[Section 3.3.5.1]{lelievre:b:2010}, but typically no easily verifiable general criteria to identify the correct solution are available.
		In our case, the explicit and simple boundedness criterion $\sup_{k\in\{1,\ldots,N\}}\|\kappa_k\|<\Clm$ identifies the correct solution, where ``correct'' refers to the fact that the scheme converges in the sense described in Theorem~\ref{theo:mainend} below.
	\end{remark}

\section{Strong convergence}\label{sec:strongconv}

In this section we use Theorem~\ref{theo:mainonestep} and Corollary~\ref{cor:applyingonestep} in order to prove 
the strong convergence of our numerical scheme.
In the sequel we fix $T\in(0,\infty)$. Our main result reads as follows:

\begin{theorem}[Strong convergence]\label{theo:mainend}
	Let the assumptions in Section~\ref{subsec:assumptions} be fulfilled and
	 $(r,v,\mu)$ be the unique strong solution to the SDAE~\eqref{eq:mainSDAE} with initial conditions $r_0,v_0$ 
	 according to Theorem~\ref{theo:solutionandinherent}.
	Set $\Clm:=1/(8C_g^4)$, where $C_g\in[1,\infty)$ is given by \eqref{eq:C_g}, 
	and for $N\in\mathbb N$ set $h:=T/N$. 
	Let 
	$\big((r^N_k,v^N_k,\kappa^N_k,\lambda^N_k)\big)_{k\in\{0,\ldots,N\}}$
	be the unique 
	$T\cM\times B_{\Clm}(0)\times\bR^m$-valued
	solution to the half-explicit drift-truncated Euler scheme~\eqref{eq:approximation} 
	according to Corollary~\ref{cor:applyingonestep}, where $B_{\Clm}(0)$ is the open ball in $\bR^m$ with radius $\Clm$ and center zero and where we set $\kappa_0^N:=\lambda_0^N:=0$.
	Further, let $(\mu^N_k)_{k\in\{0,\ldots,N\}}$ be the $\bR^m$-valued process defined by $\mu^N_k:=\sum_{j=0}^k\lambda_k^N$ and let 
	$\big((\tilde r^N(t),\tilde v^N(t),\tilde \mu^N(t))\big)_{t\in[0,T]}$ 
	be defined by piecewise constant or piecewise linear interpolation of 
	$\big((r^N_k,v^N_k,\mu^N_k)\big)_{k\in\{0,\ldots,N\}}$, cf.\ Remark~\ref{rem:interpol} below.
	Then we have
	\begin{align*}
	\lim_{N\to\infty}
	\bE\bigg[\sup_{t\in[0,T]}\Big(
	\big\|r(t)-\tilde r^N(t)\big\|^p
	+\big\|v(t)-\tilde v^N(t)\big\|^p
	+\big\|\mu(t)-\tilde \mu^N(t)\big\|^p
	\Big)\bigg]=0
	\end{align*}
for all $p\in[1,\infty)$. 
\end{theorem}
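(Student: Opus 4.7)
The plan is to execute the strategy sketched in the introduction: use the decomposition from Corollary~\ref{cor:applyingonestep} to rewrite the half-explicit scheme as a perturbed fully explicit drift-truncated Euler scheme for the inherent SDE~\eqref{eq:inherentSDE}; apply the abstract convergence result from \cite{hutzenthaler:p:2015} to get pointwise-in-time $L_p$-convergence of $(\tilde r^N,\tilde v^N)$; and finally upgrade to pathwise uniform convergence and handle $\tilde\mu^N$ separately. For the reformulation, substituting \eqref{eq:nonlineardecompose_stoch} into the velocity update in \eqref{eq:approximation} and using the identity $\Id-\nabla g(r_k)G_M^{-1}(r_k)Dg(r_k)M^{-1}=P_M(r_k)$, one obtains, writing $\eta_k=\eta(r_k,v_k,h)$,
\begin{equation*}
\begin{aligned}
v_{k+1}-v_k&=\eta_k M^{-1}P_M(r_k)a(r_k,v_k)\,h+M^{-1}P_M(r_k)B(r_k,v_k)\,\Delta_h w_k\\
&\quad-\eta_k M^{-1}\nabla g(r_k)G_M^{-1}(r_k)D^2g(r_k)(v_k,v_k)\,h\\
&\quad+M^{-1}\nabla g(r_k)\bigl(\lambdare_{k+1}+\Lambdare_{k+1}\Delta_h w_k\bigr).
\end{aligned}
\end{equation*}
The first three terms on the right are the standard drift-truncated Euler step for \eqref{eq:inherentSDE}, while by \eqref{eq:estLambdarem} the last line is $\cO(h^2)$ in mean with an $\cF_{kh}$-predictable coefficient in front of $\Delta_h w_k$. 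The position update is treated analogously, with the additional $M^{-1}\nabla g(r_k)\kappa_{k+1}$ being $\cO(h^2)$ by the first estimate in \eqref{eq:estLambdarem}.

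I would then verify the three hypotheses of the abstract convergence theorem in \cite{hutzenthaler:p:2015}: a one-step consistency estimate, a semi-stability estimate for the inherent coefficients, and uniform moment growth of the discrete solution. Consistency combines the standard local truncation error of the explicit Euler step for \eqref{eq:inherentSDE} with the $\kappa$, $\lambdare$, $\Lambdare$ remainder bounds from \eqref{eq:estLambdarem}, exploiting the local Lipschitz and polynomial growth of $a,B$ and the boundedness of $M^{-1}\nabla g$, $G_M^{-1}$, $D^2g$, $D^3g$ on $\cM^\varepsilon$. Semi-stability follows from the local Lipschitz continuity of the coefficients of \eqref{eq:inherentSDE} on balls, together with the observation that the one-sided linear growth of $(x,y)\mapsto(y,a(x,y))$ from Assumption~\ref{ass:aB} transfers to the projected drift in \eqref{eq:inherentSDE}, because the extra term $\nabla g(r)G_M^{-1}(r)D^2g(r)(v,v)$ is orthogonal to $v$ in the $M$-inner product whenever $(r,v)\in T\cM$. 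For the moment-growth bound, the truncation factor $\eta_k$ controls the superlinear contributions of $a$ and of the quadratic term $D^2g(r_k)(v_k,v_k)$, while the $\cF_{kh}$-predictability of $\Lambdare_{k+1}$, highlighted in Remark~\ref{rem:solvingapproximation}, kills the It\^o-type cross terms in the second-moment recursion; the polynomial factors from \eqref{eq:estLambdarem} can then be absorbed by Gr\"onwall's lemma. Applying \cite{hutzenthaler:p:2015} then yields $\lim_{N\to\infty}\sup_{t\in[0,T]}\bE\bigl(\|r(t)-\tilde r^N(t)\|^p+\|v(t)-\tilde v^N(t)\|^p\bigr)=0$ for every $p\in[1,\infty)$.

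The upgrade to pathwise uniform convergence in \eqref{eq:convergence_intro} is obtained by representing the continuous-time error $(r-\tilde r^N, v-\tilde v^N)$ as a finite-variation part plus a martingale term, applying Burkholder--Davis--Gundy to the latter, and using the uniform moment bounds for $(r,v)$ from Theorem~\ref{theo:solutionandinherent} and for $(\tilde r^N,\tilde v^N)$ just obtained, together with the local Lipschitz assumptions to convert pointwise $L_p$-convergence into $\sup_{t\in[0,T]}$-convergence. For the Lagrange multiplier approximation, summing \eqref{eq:nonlineardecompose_stoch} from $j=0$ to $k-1$ exhibits $\mu^N_k$ as a Riemann-plus-It\^o sum discretising the explicit representation \eqref{eq:defmu} of $\mu$, plus residues $\sum_j\lambdare_j+\sum_j\Lambdare_j\Delta_h w_j$ which are controlled by the estimates \eqref{eq:estLambdarem} after another BDG application; the pathwise uniform convergence of $(\tilde r^N,\tilde v^N)$ and the local Lipschitz and polynomial growth of the integrands in \eqref{eq:defmu} then give $\sup_{t\in[0,T]}\|\mu(t)-\tilde\mu^N(t)\|\to 0$ in $L_p$. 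The main obstacle in the entire argument is the uniform-in-$N$ moment-growth step: without the truncation the superlinear drift of \eqref{eq:inherentSDE} would cause double-exponential blow-up along rare events, and the polynomial factors produced by \eqref{eq:estLambdarem} have to be carried through the $p$-th moment recursion without accumulating a bias over $N$ steps, which is exactly what the $\cF_{kh}$-measurability of $\Lambdare_{k+1}$ secures.
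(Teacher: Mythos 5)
Your proposal follows essentially the same route as the paper: rewrite the half-explicit scheme via the decomposition \eqref{eq:nonlineardecompose_stoch} and the identity for $P_M$ as a perturbed explicit drift-truncated Euler scheme for the inherent SDE, verify consistency, semi-stability (by comparison with the Euler--Maruyama step) and the moment-growth condition — exploiting the $\cF_{kh}$-measurability of $\Lambdarem$ exactly as in the paper — to invoke the abstract convergence result of \cite{hutzenthaler:p:2015}, and then upgrade to pathwise uniform convergence of $(\tilde r^N,\tilde v^N)$ and of $\tilde\mu^N$ via Burkholder--Davis--Gundy and the remainder estimates \eqref{eq:estLambdarem}. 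The only step you gloss over is that the inherent coefficients are defined only near $T\cM$ and must first be extended to globally defined, locally Lipschitz functions satisfying the (one-sided) growth conditions on all of $\bR^{2n}$ (the paper's Lemma~\ref{lem:extended_coefficients}) before the framework of \cite{hutzenthaler:p:2015} is applicable.
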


\begin{remark}[Interpolation]\label{rem:interpol}
By saying that 
$\big((\tilde r^N(t),\tilde v^N(t),\tilde \mu^N(t))\big)_{t\in[0,T]}$
is defined by piecewise constant or linear interpolation of 
	$\big((r^N_k,v^N_k,\mu^N_k)\big)_{k\in\{0,\ldots,N\}}$ we mean that 
either
	$\big(\tilde r^N(t),\tilde v^N(t),\tilde \mu^N(t)\big)
	=
	\big( r^N_k,v^N_k,\mu^N_k\big)$
	for $t\in[kh,(k+1)h]$
	in the  piecewise constant case,
	or
	$
	\big(\tilde r^N(t),\tilde v^N(t),\tilde \mu^N(t)\big)
	=
	\frac{(k+1)h-t}h\big( r^N_k,v^N_k,\mu^N_k\big)+\frac{t-kh}h\big(r^N_{k+1},v^N_{k+1},\mu^N_{k+1}\big)
	$
	for $t\in[kh,(k+1)h]$
	in the piecewise linear case, $k\in\{0,\ldots,N-1\}$. 
\end{remark}


The proof of Theorem~\ref{theo:mainend} is structured as follows: In Section~\ref{subsec:SDEsetting} we reformulate the problem under consideration in a suitable way; the main idea is to formally consider the half-explict drift-truncated Euler scheme as a fully explicit drift-truncated Euler scheme with an additional perturbation term.  Some auxiliary results from the literature concerning the approximation of SDEs with non-globally Lipschitz continuous coefficients are recalled in Section~\ref{subsec:auxiliary}.
In Section~\ref{subsec:convergence_rv} we 
use the results from Section~\ref{sec:approximation} in order to 
verify consistency,  semi-stability and moment growth conditions which imply the strong convergence
$\lim_{N\to\infty}\sup_{t\in[0,T]}\bE\big(
	\big\|r(t)-\tilde r^N(t)\big\|^p
	+\big\|v(t)-\tilde v^N(t)\big\|^p\big)=0$.
Pathwise uniform convergence of $\tilde r^N$, $\tilde v^N$ as well as $\tilde \mu^N$ is finally established in Section~\ref{subsec:convergence_mu}.


\subsection{Reformulation of the problem}\label{subsec:SDEsetting}

Here we reformulate the inherent SDE \eqref{eq:inherentSDE} and the scheme \eqref{eq:approximation} in a way that simplifies the proof of Theorem~\ref{theo:mainend}.
In the sequel, we always assume that the assumptions in Section~\ref{subsec:assumptions} are fulfilled.

While \eqref{eq:inherentSDE} 
is a system in $\bR^n\times\bR^n$ 
with coefficient functions defined on a neighborhood of the tangent bundle $T\cM$, it will be convenient to rewrite it as an $\bR^{2n}$-valued SDE with suitably extended coefficient functions of the form
\begin{align}\label{eq:SDEstrongsetting}
	X(t)=X_0+\int_0^t \drift\big(X(s)\big) \dl s+\int_0^t \diff\big(X(s)\big) \dl w(s),
\end{align}
where $X(t)=(r(t),v(t))^\top$, $X_0=(r_0,v_0)^\top$, and $\drift\colon \mathbb R^{2n}\to \mathbb R^{2n}$, $\diff\colon \mathbb R^{2n}\to \mathbb R^{2n\times \ell}$ are given in Lemma~\ref{lem:extended_coefficients} below.
The following notation is useful in this context.

\begin{notation}
A generic element in $\bR^{2n}$ is denoted by $x=(x^{(1)},x^{(2)})^\top$, where $x^{(1)}\in\bR^n$ and $x^{(2)}\in\bR^n$ are the vectors consisting of the first and last $n$ components of $x$, respectively.  We identify the spaces $\bR^n\times\bR^n$ and $\bR^{2n}$ and consider $T\cM$, $(T\cM)^{\varepsilon}$ etc.\ as subsets of $\bR^{2n}$.
By $\langle \cdot,\cdot\rangle_*$ we denote the inner product on $\bR^{2n}$ defined for $x=(x^{(1)},x^{(2)})^\top$ and $y=(y^{(1)},y^{(2)})^\top\in\bR^{2n}$ by
\begin{align}\label{eq:innerproduct*}
\left\langle
\begin{pmatrix}
x^{(1)}\\x^{(2)}
\end{pmatrix},
\begin{pmatrix}
y^{(1)}\\y^{(2)}
\end{pmatrix}
\right\rangle_{*}
&:=
\left\langle
\begin{pmatrix}
x^{(1)}\\M^{1/2}x^{(2)}
\end{pmatrix},
\begin{pmatrix}
y^{(1)}\\M^{1/2}y^{(2)}
\end{pmatrix}
\right\rangle,
\end{align}
i.e., $\langle x,y\rangle_*=\big\langle x^{(1)},y^{(1)}\big\rangle+\big\langle M^{1/2}x^{(2)},M^{1/2}y^{(2)}\big\rangle$.
The corresponding norm on $\bR^{2n}$ is denoted by $\|\,\cdot\,\|_*$. For $\diff\in\bR^{2n\times\ell}$ we denote by $\|\diff\|_*$ the Hilbert-Schmidt norm of $\diff$ considered as an operator from $(\bR^\ell,\langle\cdot,\cdot\rangle)$ to $(\bR^{2n},\langle\cdot,\cdot\rangle_*)$.
\end{notation}

The inner product \eqref{eq:innerproduct*} is such that the drift coefficient of the inherent SDE \eqref{eq:inherentSDE} satisfies a one-sided linear growth condition w.r.t.\ $\langle\cdot,\cdot\rangle_*$ on the tangent bundle $T\cM$.
Note that the coefficient functions in \eqref{eq:inherentSDE} are defined only on a neighborhood of $T\cM$. 
The next lemma provides suitable extensions to the whole space fulfilling (one-sided) linear growth and polynomial growth conditions.

\begin{lemma}[Extended coefficient functions]\label{lem:extended_coefficients}
There exists an open neighborhood $\cO\subset(T\cM)^{\varepsilon/2}$
of $T\cM$ and locally Lipschitz continuous mappings $\drift\colon\bR^{2n}\to\bR^{2n}$, $\diff\colon\bR^{2n}\to\bR^{2n\times\ell}$ such that the following holds:
For all $x=(x^{(1)},x^{(2)})^\top\in\cO$ we have
\begin{align}\label{eq:defbabB}
\begin{split}
\drift(x)
&=
\begin{pmatrix}
x^{(2)}\\ 
M^{-1}\Big[P_M\big(x^{(1)}\big)a\big(x^{(1)},x^{(2)}\big)-\nabla g\big(x^{(1)}\big)G_M^{-1}\big(x^{(1)}\big)D^2g\big(x^{(1)}\big)\big(x^{(2)},x^{(2)}\big)\Big]
\end{pmatrix}\\
\diff(x)
&=
\begin{pmatrix}
0\\
M^{-1}P_M\big(x^{(1)}\big)B\big(x^{(1)},x^{(2)}\big)
\end{pmatrix},
\end{split}
\end{align}
$\drift$ and $\diff$ are zero on $\bR^{2n}\setminus (T\cM)^\varepsilon$,
and there exists a constant $C\in(0,\infty)$ such that, for all $x\in\bR^{2n}$, 
\begin{align}\label{eq:assbabB}
\begin{split}
\max\big(\langle x,\drift(x)\rangle_*,\|\diff(x)\|_*^2\big)
&\leq 
C\big(1+\|x\|_*^2\big),
\\
\|\drift(x)\|_*
&\leq 
C\big(1+\|x\|_*^{\max(2,p_a)}\big).
\end{split}
\end{align}
\end{lemma}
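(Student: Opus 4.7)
The plan is to construct $\drift,\diff$ by multiplying the ``inner'' expressions in \eqref{eq:defbabB} by a smooth cutoff function. Since by Assumption~\ref{ass:g} the Jacobian $Dg$ has full rank on $\cM^\varepsilon$, the matrices $G_M^{-1}(x^{(1)})$ and $P_M(x^{(1)})$ are well defined and uniformly bounded for $x^{(1)}\in\cM^\varepsilon$ by \eqref{eq:C_g}; hence the right-hand sides of \eqref{eq:defbabB} define locally Lipschitz maps $\drift_0,\diff_0$ on the open set $(T\cM)^\varepsilon$. I would then pick a smooth $\chi\colon\bR^{2n}\to[0,1]$ with $\chi\equiv 1$ on an open neighborhood $\cO\subset(T\cM)^{\varepsilon/2}$ of $T\cM$ and $\chi\equiv 0$ off $(T\cM)^\varepsilon$, and set $\drift:=\chi\drift_0$, $\diff:=\chi\diff_0$, extended by zero outside $(T\cM)^\varepsilon$. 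The precise shape of $\cO$ and $\chi$ still has to be tuned, see the last paragraph.

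Local Lipschitz continuity of $\drift,\diff$ on $\bR^{2n}$ would then follow from the smoothness of $\chi$, the local Lipschitz property of $a,B$ from Assumption~\ref{ass:aB}, and the uniform bounds on $Dg,G_M^{-1},M^{-1}\nabla g,D^2g,D^3g$ on $\cM^\varepsilon$ collected in \eqref{eq:C_g}. Since $\|\cdot\|$ and $\|\cdot\|_*$ are equivalent on $\bR^{2n}$, it is enough to verify the growth bounds \eqref{eq:assbabB} with $\|\cdot\|$ in place of $\|\cdot\|_*$. The estimate $\|\diff(x)\|^2\leq C(1+\|x\|^2)$ is immediate from $\chi\leq 1$, the uniform bound on $M^{-1}P_M$ on $\cM^\varepsilon$, and the linear growth of $B$ in Assumption~\ref{ass:aB}. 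The polynomial bound $\|\drift(x)\|\leq C(1+\|x\|^{\max(2,p_a)})$ follows component by component: the first block is $\chi(x)x^{(2)}$, while in the second block $P_M a$ contributes at most $\|x\|^{p_a}$ and $\nabla g\,G_M^{-1}D^2g(x^{(2)},x^{(2)})$ contributes at most $\|x\|^2$.

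The delicate part is the one-sided bound $\langle x,\drift(x)\rangle_*\leq C(1+\|x\|_*^2)$. Using the definition of $\langle\cdot,\cdot\rangle_*$ and $P_M=\Id-\nabla g\,G_M^{-1}Dg\,M^{-1}$, a direct computation yields
\[
\langle x,\drift_0(x)\rangle_*=\bigl[\langle x^{(1)},x^{(2)}\rangle+\langle x^{(2)},a(x^{(1)},x^{(2)})\rangle\bigr]-\bigl\langle Dg(x^{(1)})x^{(2)},\,G_M^{-1}(x^{(1)})\bigl[Dg(x^{(1)})M^{-1}a+D^2g(x^{(1)})(x^{(2)},x^{(2)})\bigr]\bigr\rangle.
\]
The first bracket is controlled by $C_a(1+\|x\|^2)$ via Assumption~\ref{ass:aB}, so everything hinges on the ``error'' term. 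A first-order Taylor expansion comparing $x$ with its nearest point in $T\cM$ gives $\|Dg(x^{(1)})x^{(2)}\|\leq C\dist(x,T\cM)(1+\|x\|)$ on $(T\cM)^\varepsilon$, so the error is bounded by $C\dist(x,T\cM)(1+\|x\|^{\max(p_a,2)+1})$. To ensure that $\chi(x)$ times this term remains $O(1+\|x\|^2)$, $\cO$ should be a \emph{shrinking} neighborhood of $T\cM$, for instance $\cO:=\{x\in(T\cM)^{\varepsilon/2}:\dist(x,T\cM)(1+\|x\|)^{\max(p_a,2)-1}<\delta_0\}$ for a small enough $\delta_0>0$, and the cutoff $\chi$ has to decay like $(1+\|x\|)^{-(\max(p_a,2)-1)}$ on the transition region $(T\cM)^\varepsilon\setminus\cO$. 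Realizing $\chi$ smoothly with that decay, while preserving all previous bounds, is the main technical point of the proof.
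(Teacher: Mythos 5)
Your reduction of the one-sided bound to the error term
$\langle Dg(x^{(1)})x^{(2)},\,G_M^{-1}(x^{(1)})[Dg(x^{(1)})M^{-1}a+D^2g(x^{(1)})(x^{(2)},x^{(2)})]\rangle$
is correct and is exactly the right place to focus: this term vanishes on $T\cM$ and is the only obstruction to \eqref{eq:assbabB} holding on all of $(T\cM)^\varepsilon$. The gap is that you then commit to the hardest possible way of handling it and stop precisely there: you require a cutoff $\chi$ that equals $1$ on a neighborhood of $T\cM$ which \emph{shrinks} like $(1+\|x\|)^{-(\max(p_a,2)-1)}$ and that itself decays polynomially in $\|x\|$ on the transition region, and you explicitly defer its construction (``the main technical point''). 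As written, the proof is incomplete at its crux. Note also that a merely Lipschitz $\chi$ (e.g.\ a function of $\dist(x,T\cM)(1+\|x\|)^{\max(p_a,2)-1}$) would suffice for the lemma, since only local Lipschitz continuity of $\drift,\diff$ is claimed; insisting on smoothness adds difficulty without benefit. A second, minor point: multiplying by a scalar in $[0,1]$ preserves a one-sided bound only at points where the bound (or nonpositivity) already holds, so your argument genuinely needs the quantitative decay of $\chi$ against the quantitative growth of the error term — there is no way to wave this away.

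The paper avoids all of this with a softer device that you may want to compare against. It verifies the inequalities \eqref{eq:assbabB} with constant $C/2$ and \emph{strict} inequality only on $T\cM$ itself (where the error term is exactly zero), defines $\cO$ as the open subset of $(T\cM)^{\varepsilon/2}$ on which these strict inequalities persist (open by continuity, contains $T\cM$), defines a ``bad set'' $\mathcal R$ containing all points where the inequalities with constant $C$ fail together with the complement of $(T\cM)^\varepsilon$, and glues with the Lipschitz factor $\dist(x,\mathcal R)/(\dist(x,\cO)+\dist(x,\mathcal R))$, which is identically $1$ on $\cO$, identically $0$ on $\mathcal R$, and lies in $[0,1]$ in between — where the bounds hold by the very definition of $\mathcal R$ and are preserved under multiplication by a factor in $[0,1]$. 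No estimate of how fast the error term grows away from $T\cM$, no shrinking neighborhood, and no decaying cutoff are needed. If you want to keep your route, you must actually exhibit $\chi$ and check that $\chi(x)\,\dist(x,T\cM)(1+\|x\|)^{\max(p_a,2)-1}$ stays bounded on all of $(T\cM)^\varepsilon$; until then the argument does not close.
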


\begin{proof}
Let $\varepsilon>0$ be as in Assumption~\ref{ass:g}. 
For  all $x=(x^{(1)},x^{(2)})^\top\in(T\cM)^{\varepsilon}$, let $\tilde\drift(x)\in\bR^{2n}$ and $\tilde\diff(x)\in\bR^{2n\times\ell}$ be defined by the right hand side of \eqref{eq:defbabB}.
As a consequence of the growth conditions in Assumption~\ref{ass:aB}, there exists a constant $C\in(0,\infty)$ such that for all $x\in T\cM$ it holds that
\begin{align}\label{eq:est_tildeAB}
\begin{split}
\max\big(\langle x,\tilde\drift(x)\rangle_*,\|\tilde\diff(x)\|_*^2\big)
&<
\frac C2\big(1+\|x\|_*^2\big),\quad
\|\tilde\drift(x)\|_*
<
\frac C2\big(1+\|x\|_*^{\max(2,p_a)}\big).
\end{split}
\end{align}
Indeed, considering for instance $\langle x,\tilde\drift(x)\rangle_*$
we have, for $x=(x^{(1)},x^{(2)})^\top\in T\cM$,
\begin{align*}
\big\langle x,\,\tilde\drift(x)\big\rangle_*
&=
\big\langle x^{(1)},\,x^{(2)}\big\rangle+
\big\langle x^{(2)},\,P_M\big(x^{(1)}\big)a\big(x^{(1)},x^{(2)}\big)\big\rangle\\
&\quad
-\big\langle x^{(2)},\,\nabla g\big(x^{(1)}\big)G_M^{-1}\big(x^{(1)}\big)D^2g\big(x^{(1)}\big)\big(x^{(2)},x^{(2)}\big)\big\rangle\\
&=
\big\langle x^{(1)},\,x^{(2)}\big\rangle+\big\langle x^{(2)},a\big(x^{(1)},x^{(2)}\big)\big\rangle,
\end{align*}
where we use the fact that, for all $z\in\bR^{n}$,
\begin{align*}
	\begin{split}
	\big\langle x^{(2)},P_M\big(x^{(1)}\big)z\big\rangle
	&=
	\big\langle M^{-1/2} Mx^{(2)},M^{-1/2}P_M\big(x^{(1)}\big)z\big\rangle
	=
	\big\langle M^{-1/2} Mx^{(2)},M^{-1/2}z\big\rangle,
	\\
	\big\langle x^{(2)},\nabla g\big(x^{(1)}\big)z\big\rangle
	&=
	\big\langle Dg\big(x^{(1)}\big)x^{(2)},z\big\rangle
	=
	\langle 0,z\rangle=0.
	\end{split}
\end{align*}
Let $\cO\subset\bR^{2n}$ be the open set consisting of all $x\in(T\cM)^{\varepsilon/2}$ such that the estimate \eqref{eq:est_tildeAB} is fulfilled, and let $\mathcal R\subset\bR^{2n}$ be defined by
\begin{align*}
\mathcal R&=\Bigg\{x\in(T\mathcal M)^{\varepsilon}:\begin{array}{c}\max\big(\langle x,\drift(x)\rangle_*,\|\diff(x)\|_*^2\big)\geq C(1+\|x\|_*^2),\\ \|\drift(x)\|_*\geq C\big(1+\|x\|_*^{\max(2,p_a)}\big)\end{array}\Bigg\}\cup \big((T\mathcal M)^{\varepsilon}\big)^c.
\end{align*}
Observe that $T\cM\subset \cO\subset\mathcal R^c \subset(T\cM)^\varepsilon$ and that $\dist(x,\cO)+\dist(x,\mathcal R)>0$ for all $x\in\bR^{2n}$. 
Thus, we can define $\drift\colon \bR^{2n} \to \bR^{2n}$ and $\diff\colon \bR^{2n} \to \bR^{2n\times \ell}$ by setting
\begin{align}\label{eq:defAB}
\drift(x):=\frac{\operatorname{dist}(x,\mathcal R)}{\operatorname{dist}(x,\mathcal R)+\operatorname{dist}(x,\cO)}\,\tilde\drift(x),\quad\diff(x):=\frac{\operatorname{dist}(x,\mathcal R)}{\operatorname{dist}(x,\mathcal R)+\operatorname{dist}(x,\cO)}\,\tilde\diff(x)
\end{align}
for $x\in\bR^{2n}$, where we consider arbitrary extensions of $\tilde\drift$ and $\tilde\diff$ to the whole space $\bR^{2n}$.
Note that the globalization factor $\dist(x,\mathcal R)/(\dist(x,\cO)+\dist(x,\mathcal R))$ is less than or equal to one and vanishes on $\mathcal R$, so that the estimate \eqref{eq:assbabB} is fulfilled for all $x\in\bR^{2n}$ due to the construction of the sets $\cO$ and $\mathcal R$.
Further, since the mappings $x\mapsto\dist(x,\cO)$ and $x\mapsto\dist(x,\mathcal R)$ are Lipschitz continuous and since for all $R\in(0,\infty)$ there exists some $C_R\in(0,\infty)$ such that 
$
\inf_{x\in B_R(0)}(\dist(x,\cO)+\dist(x,\mathcal R))\geq C_R,
$
we have that $x\mapsto\dist(x,\mathcal R)/(\dist(x,\cO)+\dist(x,\mathcal R))$ is locally Lipschitz continuous. As a consequence, 
$\drift$ and $\diff$ are locally Lipschitz continuous as well.
\end{proof}

In what follows we always consider the extended coefficient functions $\drift$ and $\diff$ constructed in Lemma~\ref{lem:extended_coefficients}. Under the assumptions in Section~\ref{subsec:assumptions}, we can argue as in Theorem~\ref{theo:solvability_app} to obtain that the SDE~\eqref{eq:SDEstrongsetting} has a unique strong solution $X=(X(t))_{t\geq0}$ with initial condition $X_0=(r_0,v_0)^\top$. It is also clear that this solution coincides with the solution to the inherent SDE~\eqref{eq:inherentSDE} with initial conditions $r_0,v_0$ in the sense that, $\bP$-almost surely, $X(t)=(r(t),v(t))^\top$ for all $t\geq0$.

Next, we reformulate the half-explicit drift-truncated Euler scheme~\eqref{eq:approximation} as an explicit one-step scheme of the form
\begin{align}\label{eq:appstrongdef}
X_{k+1}^N=X_k^N+\phi\big(X_k^N,\,h,\,\Delta_{h}w_k\big),
\end{align}
$k=0,\ldots,N-1$, where $h=T/N$ and $\Delta_hw_k=w((k+1)h)-w(kh)$ are as in Section~\ref{subsec:scheme}, $X_0^N:=X_0=(r_0,v_0)^\top$, and where
$\phi:\mathbb R^{2n}\times [0,T]\times \mathbb R^{\ell}\to\mathbb R^{2n}$ is an increment function of the type
\begin{align}\label{eq:algtype}
\phi(x,h,w)&:=\eta_0(x,h,w)+\eta_1(x,h,w)\drift(x)h+\eta_2(x,h,w)\diff(x)w
\end{align}
with Borel-measurable functions  $\eta_0\colon\mathbb R^{2n}\times[0,T]\times \mathbb R^{\ell}\to\mathbb R^{2n}$, $\eta_1\colon\mathbb R^{2n}\times [0,T]\times\mathbb R^{\ell}\to\mathbb R$ and $\eta_2\colon\mathbb R^{2n}\times[0,T]\times \mathbb R^{\ell}\to\mathbb R$. 


\begin{lemma}[Reformulation of the scheme]
For $N\in\mathbb N$ and $h:=T/N$ let
$\big((r^N_k,v^N_k,\kappa^N_k,\lambda^N_k)\big)_{k\in\{0,\ldots,N\}}$ 
with  $\kappa_0^N:=\lambda_0^N:=0\in\bR^m$
be the solution to the half-explicit drift-truncated Euler scheme \eqref{eq:approximation} according to Corollary~\ref{cor:applyingonestep}.
Let $\drift\colon\bR^{2n}\to\bR^{2n}$, $\diff\colon\bR^{2n}\to\bR^{2n\times\ell}$ be the mappings introduced in Lemma~\ref{lem:extended_coefficients} and let $(X^N_k)_{k\in\{0,\ldots,N\}}$ be the solution to the scheme \eqref{eq:appstrongdef} with initial condition $X^N_0=(r_0,v_0)^\top$ and with increment function $\phi\colon\mathbb R^{2n}\times[0,T]\times \mathbb R^{\ell}\to\mathbb R^{2n}$ defined by \eqref{eq:algtype} and 
\begin{align}\label{eq:schemeinterpret}
\begin{split}
\eta_0(x,h,w)
&:=
\mathbbm 1_{\cD}(x,h)
\begin{pmatrix}
M^{-1}\nabla g\big(x^{(1)}\big)\hat \kappa\big(x^{(1)},x^{(2)},h\big)\\
M^{-1}\nabla g\big(x^{(1)}\big)\Big[\lambdarem\big(x^{(1)},x^{(2)},h\big)+\Lambdarem\big(x^{(1)},x^{(2)},h\big)w\Big]
\end{pmatrix},
\\
\eta_1(x,h,w)&:=\eta\big(x^{(1)},x^{(2)},h\big),\\
\eta_2(x,h,w)&:=1,
\end{split}
\end{align}
where 
the open neighborhood 
$\cD\subset\bR^{2n}\times(0,\infty)$ 
of $T\cM\times(0,\infty)$ 
and 
the mappings 
$\hat\kappa\colon\cD\to\bR^m$, $\lambdarem\colon\cD\to\bR^m$, $\Lambdarem\colon\cD\to\bR^{m\times\ell}$ are given by Theorem~\ref{theo:mainonestep}
and $\eta$ is the truncation function given by \eqref{eq:defeta}.
Then we have
\begin{align*}
X^N_k
=
\big(r^N_k,v^N_k\big)^\top
\;\text{ for all } k\in\{0,\ldots,N\}.
\end{align*}
\end{lemma}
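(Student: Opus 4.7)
The plan is to argue by induction on $k\in\{0,\ldots,N\}$. The base case $X_0^N=(r_0^N,v_0^N)^\top$ holds by definition of the initial conditions. For the induction step, I assume $X_k^N=(r_k^N,v_k^N)^\top$ and verify the identity componentwise, relying on the explicit decomposition \eqref{eq:nonlineardecompose_stoch} of $\lambda_{k+1}^N$ and on the structure of $\drift$, $\diff$ on the tangent bundle.

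First I would observe that since the scheme is constraint-preserving, Corollary~\ref{cor:applyingonestep} guarantees $(r_k^N,v_k^N)\in T\cM$, so in particular $(X_k^N,h)\in\cD$ (because $\cD$ was constructed as an open neighborhood of $T\cM\times(0,\infty)$); hence $\mathbbm{1}_\cD(X_k^N,h)=1$ and the definitions of $\eta_0,\eta_1,\eta_2$ become the unambiguous values at $(r_k^N,v_k^N,h)$. On the tangent bundle, $\drift$ and $\diff$ have the explicit form given in Lemma~\ref{lem:extended_coefficients}, so the first $n$ components of the increment $\phi(X_k^N,h,\Delta_hw_k)$ reduce to $\eta(r_k^N,v_k^N,h)\,v_k^N\,h+M^{-1}\nabla g(r_k^N)\,\hat\kappa(r_k^N,v_k^N,h)$. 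Since $\hat\kappa(r_k^N,v_k^N,h)=\kappa_{k+1}^N$ by Theorem~\ref{theo:mainonestep}, this matches the update in \eqref{eq:approximationup}.

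The main step (and the only part needing any real computation) is the identification of the last $n$ components. I would expand using $P_M(r_k^N)=\Id-\nabla g(r_k^N)G_M^{-1}(r_k^N)Dg(r_k^N)M^{-1}$, split the resulting expression into a part of the form $M^{-1}[\eta\,a\,h+B\Delta_hw_k]$ and a part of the form $M^{-1}\nabla g(r_k^N)\,[\cdots]$, and check that the bracketed quantity coincides with the representation of $\lambda_{k+1}^N$ given by \eqref{eq:nonlineardecompose_stoch}. Concretely, the $-\nabla g(r_k^N)G_M^{-1}(r_k^N)Dg(r_k^N)M^{-1}a$ piece from $P_M a$, the analogous $B$-piece from $P_M B\Delta_hw_k$, the $-\nabla g(r_k^N)G_M^{-1}(r_k^N)D^2g(r_k^N)(v_k^N,v_k^N)h$ term from $\drift$, together with the contributions $\lambdare_{k+1}$, $\Lambdare_{k+1}\Delta_hw_k$ coming from $\eta_0$, assemble exactly into $\nabla g(r_k^N)\lambda_{k+1}^N$.

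The expected obstacle is purely bookkeeping: one must keep careful track of which terms carry the factor $\eta(r_k^N,v_k^N,h)$, which do not, and ensure that the pieces from the projector $P_M$ correctly combine with $\lambdare_{k+1}$, $\Lambdare_{k+1}\Delta_hw_k$ to reproduce the decomposition \eqref{eq:nonlineardecompose_stoch}. There is no analytic difficulty here; the lemma is essentially a tautology once Theorem~\ref{theo:mainonestep} and Corollary~\ref{cor:applyingonestep} are in place. Its purpose is to repackage the implicit scheme as a perturbation of an explicit drift-truncated Euler scheme for \eqref{eq:SDEstrongsetting}, enabling the application of the general convergence machinery in Section~\ref{subsec:convergence_rv}.
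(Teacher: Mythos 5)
Your proposal is correct and is essentially the paper's own argument: the paper proves this lemma in one sentence by citing the decomposition \eqref{eq:nonlineardecompose_stoch} of $\lambda_{k+1}$ and the identity $P_M(x^{(1)})=\Id-\nabla g(x^{(1)})G_M^{-1}(x^{(1)})Dg(x^{(1)})M^{-1}$, which is precisely the bookkeeping you spell out. Your additional observations (induction, $(X_k^N,h)\in\cD$ so the indicator equals one, and that $\drift,\diff$ take the explicit form \eqref{eq:defbabB} on $T\cM\subset\cO$) are correct and merely make explicit what the paper leaves implicit.
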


\begin{proof}
The assertion is a direct consequence of the decomposition \eqref{eq:nonlineardecompose} of the Lagrange multiplier $\hat\lambda$ in Theorem~\ref{theo:mainonestep}, respectively the decomposition \eqref{eq:nonlineardecompose_stoch} of $\lambda_{k+1}$ in Corollary~\ref{cor:applyingonestep}, and the identity $P_M(x^{(1)})=\Id-\nabla g(x^{(1)})G_M^{-1}(x^{(1)})Dg(x^{(1)})M^{-1}\in\bR^{n\times n}$ for $x^{(1)}\in\bR^n$.
\end{proof}

\begin{notation}
In the sequel, we denote by $\phi_\cM$ the increment function corresponding to the half-explicit drift-truncated Euler scheme \eqref{eq:approximation}, defined by \eqref{eq:algtype} and \eqref{eq:schemeinterpret}.
We also write $\eta(x,h)$ instead of $\eta(x^{(1)},x^{(2)},h)$ for all $x=(x^{(1)},x^{(2)})^\top\in\mathbb R^{2n}$, $h\in(0,\infty)$, and the truncation function $\eta$ in \eqref{eq:defeta}.
Moreover, we denote by $\phi_{\mathrm{EM}}$ the increment function corresponding to the classical Euler-Maruyama scheme, which is also  of the form \eqref{eq:algtype} with
\begin{align}\label{eq:schemeEM}
\eta_0(x,h,w)=0,\quad \eta_1(x,h,w)=1,\quad\eta_2(x,h,w)=1.
\end{align}
\end{notation}

\subsection{Auxiliary results}\label{subsec:auxiliary}
Here we recall some  results concerning the approximation of SDEs with non-globally Lipschitz continuous coefficients, taken from \cite{hutzenthaler:p:2015}. 
Throughout this subsection, we suppose that the assumptions in Subsection~\ref{subsec:assumptions} are fulfilled and consider the setting described in Subsection~\ref{subsec:SDEsetting}. 
In particular, $\drift$ and $\diff$ are the coefficient functions defined in Lemma~\ref{lem:extended_coefficients}.

We begin by recalling the concepts of consistency and semi-stability w.r.t\ Brownian motion. 
The function 
$\Phi$
appearing below will later be chosen as $\Phi(x,h,w)=x+\phi(x,h,w)$, where $\phi$ is an increment function as in \eqref{eq:algtype}.


\begin{definition}[Consistency, semi-stability]\label{def:consisstable} 
Let $\phi$ and $\Phi$ be Borel-measurable mappings from $\bR^{2n}\times [0,T]\times \bR^\ell$ to $\bR^{2n}$.
\begin{enumerate}[leftmargin=7mm]
		\item[(i)] We say that $\phi$ is $(\drift,\diff)$-consistent with respect to Brownian motion if
		\begin{align*}
		&\limsup_{t\searrow 0}\left(\frac{1}{\sqrt{t}}\sup_{x\in K}\bE\Big[\big\|\diff(x)w(t)-\phi(x,t,w(t))\big\|\Big]\right)=0,\\
		&\limsup_{t\searrow 0}\left(\frac 1t\sup_{x\in K}\Big\|\drift(x)\,t-\bE\big[\phi(x,t,w(t))\big]\Big\|\right)=0
		\end{align*}
		for all non-empty compact sets $K\subset \mathbb R^{2n}$.
		\item[(ii)] Let $\alpha\in (0,\infty]$ and $V\colon\mathbb R^{2n}\to [0,\infty)$ be Borel-measurable. We say that $\Phi$ is $\alpha$-semi-$V$-stable w.r.t.\ Brownian motion if there exists $\rho\in\mathbb R$ such that
		\begin{align*}
		\mathbb E\big[V\big(\Phi(x,t,w(t))\big)\big]\leq e^{\rho t} V(x)
		\end{align*}
		for all $(x,t)\in\mathbb R^{2n}\times [0,T]$ with $V(x)\leq t^{-\alpha}$ if $\alpha<\infty$ 
		and for all $(x,t)\in\mathbb R^{2n}\times [0,T]$ if $\alpha=\infty$.
	\end{enumerate}
\end{definition}


We now state the main auxiliary result we use to prove strong convergence of the half-explicit drift-truncated Euler scheme. It is a special case of \cite[Corollary~3.14]{hutzenthaler:p:2015}.

\begin{proposition}\label{cor:mainjentzen}
	Let $(X(t))_{t\geq0}$ be the strong solution to the SDE~\eqref{eq:SDEstrongsetting} with initial condition $X_0=(r_0,v_0)^\top$. Let $p\in[1,\infty)$, $\varrho\in[p,\infty)\cap(1,\infty)$ and $\alpha\in(1,\infty]$. 
	Assume that $\phi\colon\mathbb R^{2n}\times[0,T]\times \mathbb R^\ell\to \mathbb R^{2n}$ is $(\drift,\diff)$-consistent 
	w.r.t.\ Brownian motion 
	and that $\Phi\colon\mathbb R^{2n}\times[0,T]\times \mathbb R^\ell\to \mathbb R^{2n}\colon(x,h,w)\mapsto x+\phi(x,h,w)$ is $\alpha$-semi-$(1+\|\cdot\|_*^\varrho )$-stable w.r.t.\ Brownian motion. For $N\in\bN$, let $(X^N_k)_{k\in\{0,\ldots,N\}}$ be the solution to the scheme \eqref{eq:appstrongdef} with initial condition $X^N_0=(r_0,v_0)^\top$ and assume further that
\begin{align}\label{eq:momentcor314}
	\limsup_{N\to\infty}\Bigl(N^{(1-\alpha)(1-1/\varrho)}\sup_{k\in\{0,\dots,N\}} \big(\mathbb E(\|X_{k}^N\|^{p\varrho})\big)^{1/\varrho}\Bigr)<\infty.
	\end{align}
	Then $\limsup_{N\to\infty}\sup_{t\in[0,T]}\mathbb E(\|\tilde X^N(t)\|^p)<\infty$ and, for all $q\in(0,p)$,
	\begin{align}\label{eq:strongconvcor314}
	\lim_{N\to\infty}\sup_{t\in[0,T]} \mathbb E\big(\|X(t)-\tilde X^N(t)\|^{q}\big)=0,
	\end{align}
	 where $(\tilde X^N(t))_{t\in[0,T]}$ is defined by piecewise linear interpolation of $(X^N_k)_{k\in\{0,\ldots,N\}}$.
\end{proposition}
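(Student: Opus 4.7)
Since the proposition is explicitly stated as a specialization of \cite[Corollary~3.14]{hutzenthaler:p:2015}, the most honest plan is to verify that the hypotheses of that general result match our setting and then invoke it. Concretely, I would first confirm that the SDE \eqref{eq:SDEstrongsetting} and the explicit one-step scheme \eqref{eq:appstrongdef} fit the framework of \cite{hutzenthaler:p:2015}: the state space $\bR^{2n}$ carries the inner product $\langle\cdot,\cdot\rangle_*$ from \eqref{eq:innerproduct*}, the coefficient functions $\drift,\diff$ from Lemma~\ref{lem:extended_coefficients} are locally Lipschitz and satisfy the coercivity and polynomial growth bounds in \eqref{eq:assbabB}, and the Wiener process $w$ is standard $(\cF_t)$-adapted. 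Under these conditions the SDE \eqref{eq:SDEstrongsetting} admits a unique strong solution $X$ with finite moments of every order (cf.~Theorem~\ref{theo:solutionandinherent} combined with Lemma~\ref{lem:extended_coefficients}), matching the standing setup of the cited corollary.

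With the setup in place, each of the remaining three hypotheses of the general theorem corresponds verbatim to one of the assumptions in our proposition: $(\drift,\diff)$-consistency of $\phi$ with respect to Brownian motion, $\alpha$-semi-$(1+\|\cdot\|_*^\varrho)$-stability of $\Phi=\mathrm{id}+\phi$, and the uniform discrete moment growth bound \eqref{eq:momentcor314}. Invoking \cite[Corollary~3.14]{hutzenthaler:p:2015} then yields directly both conclusions of the proposition: the uniform $L_p$-moment bound on the linearly interpolated process $\tilde X^N$ and the non-pathwise $L_q$-strong convergence \eqref{eq:strongconvcor314} for every $q\in(0,p)$.

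The substantive content of the proof therefore lies in the cited reference, where the main difficulty is to reconcile semi-stability—holding only on the sublevel set $\{x\in\bR^{2n}:1+\|x\|_*^\varrho\leq h^{-\alpha}\}$—with the fact that the discrete scheme can occasionally exit this set. The precise compensation is encoded in the exponent $(1-\alpha)(1-1/\varrho)$ appearing in \eqref{eq:momentcor314}, which via Markov's inequality controls the probability of such excursions, while a discrete Grönwall-type argument using consistency and stability inside the stable regime closes the estimate at the discrete time points; passage to continuous time is then achieved by a standard Itô-isometry estimate on each subinterval $[kh,(k+1)h]$ for the piecewise linear interpolation. For the present paper the real work is consequently shifted to verifying the three hypotheses for the specific increment function $\phi_{\cM}$ of \eqref{eq:schemeinterpret}; the main obstacle will be the consistency and moment growth estimates, which require the fine Lagrange multiplier decomposition and the bounds of Theorem~\ref{theo:mainonestep}, and this verification is carried out in Subsection~\ref{subsec:convergence_rv}.
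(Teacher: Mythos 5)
Your proposal matches the paper exactly: Proposition~\ref{cor:mainjentzen} is stated in the paper as a special case of \cite[Corollary~3.14]{hutzenthaler:p:2015} and is not proved there beyond checking that the setting (state space $\bR^{2n}$ with $\langle\cdot,\cdot\rangle_*$, the extended coefficients from Lemma~\ref{lem:extended_coefficients}, and the three hypotheses of consistency, semi-stability and moment growth) fits the cited framework. Your additional sketch of the internal mechanism of the cited corollary and your remark that the real work lies in verifying the hypotheses for $\phi_{\cM}$ (done in Subsection~\ref{subsec:convergence_rv}) are both accurate.
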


Suitable conditions for consistency and semi-stability w.r.t.\ Brownian motion can be found in \cite[Lemma 3.24]{hutzenthaler:p:2015} and \cite[Lemma 2.18]{hutzenthaler:p:2015}. Below we state special cases of these results in the context of our setting.
The first lemma concerns the $(\drift,\diff)$-consistency w.r.t.\ Brownian motion of schemes of the type \eqref{eq:algtype}. 
The second lemma is a comparison principle leading to semi-stability.

\begin{lemma}\label{lem:jentzen324}
	Let $\eta_0\colon\mathbb R^{2n}\times[0,T]\times \mathbb R^{\ell}\to\mathbb R^{2n}$, $\eta_1\colon\mathbb R^{2n}\times[0,T]\times \mathbb R^{\ell}\to\mathbb R$ and $\eta_2\colon\mathbb R^{2n}\times[0,T]\times \mathbb R^{\ell}\to\mathbb R$ be Borel-measurable functions such that 
	\begin{align}\label{eq:estilem324}
	\begin{gathered}
	\limsup_{t\searrow 0}\Big(\frac{1}{\sqrt{t}}\sup_{x\in K}\mathbb E\big[\|\eta_0(x,t,w(t))\|\big]\Big)=
	\limsup_{t\searrow 0}\Big(\frac{1}{t}\sup_{x\in K}\big\|\mathbb E\big[\eta_0(x,t,w(t))\big]\big\|\Big)=0,\\	
	\limsup_{t\searrow 0}\Big(\sup_{x\in K}\mathbb E\Big[\big|\eta_1(x,t,w(t))-1\big|+\big|\eta_2(x,t,w(t))-1\big|^2\Big]\Big)=0,\\
	\limsup_{t\searrow 0}\Big(\frac{1}{t}\sup_{x\in K}\big\|\mathbb E\big[\eta_2(x,t,w(t))\diff(x)w(t)\big]\big\|\Big)=0,\\
	\limsup_{t\searrow 0}\Big(\sup_{x\in K}\mathbb E\big[\|\eta_2(x,t,w(t))\diff(x)w(t)\|\big]\Big)<\infty
	\end{gathered}
	\end{align}
	for all non-empty compact sets $K\subset \bR^{2n}$. Then $\phi$ defined by \eqref{eq:algtype} is $(\drift,\diff)$-consistent with respect to Brownian motion.
\end{lemma}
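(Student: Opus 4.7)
The plan is to verify the two conditions of Definition~\ref{def:consisstable}(i) directly by expanding
\[
\phi(x,t,w(t)) = \eta_0(x,t,w(t)) + \eta_1(x,t,w(t))\,\drift(x)\,t + \eta_2(x,t,w(t))\,\diff(x)\,w(t)
\]
and estimating each of the resulting contributions using the four hypotheses in \eqref{eq:estilem324}. Throughout, let $K\subset\bR^{2n}$ be a fixed non-empty compact set and note that $\drift$ and $\diff$ are bounded on $K$ (they are locally Lipschitz continuous by Lemma~\ref{lem:extended_coefficients}).

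For the diffusion-type condition, I would first write
\[
\diff(x)w(t) - \phi(x,t,w(t)) = -\eta_0(x,t,w(t)) - \eta_1(x,t,w(t))\,\drift(x)\,t + \bigl(1-\eta_2(x,t,w(t))\bigr)\,\diff(x)\,w(t),
\]
take norms and expectations, and then divide by $\sqrt{t}$. The $\eta_0$-term is handled directly by the first line of \eqref{eq:estilem324}. The middle term is bounded by $\sqrt{t}\,\|\drift(x)\|\,\bE[|\eta_1(x,t,w(t))|]$, which is $O(\sqrt t)$ uniformly on $K$ since $\bE|\eta_1|\leq 1+\bE|\eta_1-1|\to 1$ uniformly on $K$ by the second line of \eqref{eq:estilem324}. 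For the cross term, the key step is Cauchy--Schwarz:
\[
\bE\bigl[|1-\eta_2(x,t,w(t))|\,\|\diff(x)w(t)\|\bigr]
\leq
\bigl(\bE|1-\eta_2(x,t,w(t))|^2\bigr)^{1/2}\bigl(\bE\|\diff(x)w(t)\|^2\bigr)^{1/2}.
\]
The first factor tends to zero uniformly on $K$ (second line of \eqref{eq:estilem324}), while the second is simply $\sqrt{t\,\ell}\,\|\diff(x)\|$, so their product divided by $\sqrt t$ still vanishes uniformly on $K$.

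For the drift-type condition, I would instead write
\[
\drift(x)\,t - \bE[\phi(x,t,w(t))] = -\bE[\eta_0(x,t,w(t))] + \bigl(1-\bE[\eta_1(x,t,w(t))]\bigr)\,\drift(x)\,t - \bE[\eta_2(x,t,w(t))\,\diff(x)\,w(t)],
\]
divide by $t$, and estimate: the first summand by the first line of \eqref{eq:estilem324}; the middle one by $|1-\bE[\eta_1(x,t,w(t))]|\,\|\drift(x)\|\leq\bE|1-\eta_1(x,t,w(t))|\,\|\drift(x)\|$, which tends to zero uniformly on $K$ by the second line of \eqref{eq:estilem324}; and the last one directly by the third line of \eqref{eq:estilem324}. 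The fourth hypothesis in \eqref{eq:estilem324} is not needed here but is reserved for the subsequent application of the lemma to schemes where $\eta_2\neq 1$.

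The argument is essentially routine; the only genuinely non-obvious point is the use of Cauchy--Schwarz in the cross term $|1-\eta_2|\,\|\diff(x)w(t)\|$, where it is crucial that the second hypothesis in \eqref{eq:estilem324} controls $|\eta_2-1|$ in $L^2$ rather than merely in $L^1$: this is precisely the exponent built into that assumption and is what allows the $\sqrt{t}$-rate required by the first consistency condition to survive.
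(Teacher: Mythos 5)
Your argument is correct and complete. Note that the paper does not actually prove this lemma: it presents it as a special case of Lemma~3.24 in the cited reference \cite{hutzenthaler:p:2015} and leaves the verification to that source. Your proposal therefore supplies what the paper omits, namely a direct, elementary verification of both consistency conditions from Definition~\ref{def:consisstable}(i). The two decompositions you use are the natural ones, each hypothesis in \eqref{eq:estilem324} is matched to the term it controls, and the Cauchy--Schwarz step for $\bE\bigl[|1-\eta_2|\,\|\diff(x)w(t)\|\bigr]$ is indeed the one place where the $L^2$-control of $\eta_2-1$ is essential to beat the $t^{-1/2}$ prefactor. Two very minor remarks: the exact value of $\bE\|\diff(x)w(t)\|^2$ is $t\,\|\diff(x)\|^2$ in the Frobenius norm used by the paper (your $\sqrt{t\ell}\,\|\diff(x)\|$ is a harmless overestimate if $\|\diff(x)\|$ is read as an operator norm); and your observation that the fourth hypothesis is not needed is accurate here --- in this special case it in fact follows from the second hypothesis together with the boundedness of $\diff$ on $K$, so it is redundant for the statement as given, though it appears in the more general formulation of the cited lemma.
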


\begin{lemma}\label{lem:jentzencomparisonprinciple}
	Let $\alpha\in(0,\infty)$, $\varrho\in[1,\infty)$ and $\Phi,\tilde\Phi\colon\mathbb R^{2n}\times[0,T]\times \mathbb R^\ell\to \mathbb R^{2n}$ be Borel-measurable functions such that $\tilde\Phi$ is $\alpha$-semi-$(1+\|\cdot\|_*^\varrho)$-stable w.r.t.\ Brownian motion. If there exist $C\in[0,\infty)$ such that
	\begin{align*}
	\Bigl(\mathbb E \big(\bigl\|\Phi(x,t,w(t))-\tilde \Phi(x,t,w(t))\bigr\|^\varrho\big)\Bigr)^{1/\varrho}\leq C\cdot t\cdot\big(1+\|x\|_*^\varrho\big)^{1/\varrho}
	\end{align*}
	for all $(x,t)\in\bR^{2n}\times(0,T]$ with $(1+\|x\|_*^\varrho)\leq t^{-\alpha}$, then $\Phi$ is also $\alpha$-semi-$(1+\|\cdot\|_*^\varrho)$-stable w.r.t.\ Brownian motion.
\end{lemma}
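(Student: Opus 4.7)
My plan is to verify $\alpha$-semi-$V$-stability of $\Phi$ for the Lyapunov function $V=1+\|\cdot\|_*^\varrho$ by decomposing $\Phi=\tilde\Phi+(\Phi-\tilde\Phi)$ and combining the assumed stability of $\tilde\Phi$ with the given $L^\varrho$-closeness. That is, I have to produce some $\tilde\rho\in\bR$ such that
\[
\bE\bigl[V(\Phi(x,t,w(t)))\bigr]\leq e^{\tilde\rho t}\,V(x)
\]
holds on the admissible set $\{V(x)\leq t^{-\alpha}\}$. The two hypotheses will enter additively, one for each summand of the decomposition, and the remaining work is to collect everything into a single exponential.

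The pointwise ingredient is a parameterized Young-type estimate: for $\varrho\geq 1$, $\varepsilon>0$ and $a,b\geq 0$,
\[
(a+b)^\varrho\leq(1+\varepsilon)^{\varrho-1}a^\varrho+(1+1/\varepsilon)^{\varrho-1}b^\varrho,
\]
a direct consequence of the convexity of $u\mapsto u^\varrho$ applied with weights $\tfrac{1}{1+\varepsilon}$ and $\tfrac{\varepsilon}{1+\varepsilon}$. Applied with $a=\|\tilde\Phi\|_*$, $b=\|\Phi-\tilde\Phi\|_*$ via the triangle inequality and combined with the trivial bound $1\leq(1+\varepsilon)^{\varrho-1}$, it yields pointwise
\[
V(\Phi)\leq(1+\varepsilon)^{\varrho-1}V(\tilde\Phi)+(1+1/\varepsilon)^{\varrho-1}\|\Phi-\tilde\Phi\|_*^\varrho.
\]
Taking expectation, inserting the stability bound $\bE V(\tilde\Phi(x,t,w(t)))\leq e^{\rho t}V(x)$ and the closeness assumption (absorbing the equivalence constant between $\|\cdot\|$ and $\|\cdot\|_*$ into a constant $C'$) gives
\[
\bE\bigl[V(\Phi(x,t,w(t)))\bigr]\leq\Bigl[(1+\varepsilon)^{\varrho-1}e^{\rho t}+(1+1/\varepsilon)^{\varrho-1}(C')^\varrho t^\varrho\Bigr]V(x).
\]
Both input estimates are valid precisely because we restrict to $V(x)\leq t^{-\alpha}$.

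It remains to optimize over $\varepsilon$. Taking $\varepsilon=t\in(0,T]$, the elementary bounds $(1+t)^{\varrho-1}\leq e^{(\varrho-1)t}$ and $(1+1/t)^{\varrho-1}t^\varrho=t(1+t)^{\varrho-1}\leq t\,e^{(\varrho-1)t}$ reduce the bracket to $e^{(\varrho-1)t}\bigl(e^{\rho t}+(C')^\varrho t\bigr)$. Since the additive term $(C')^\varrho t$ is linear on $[0,T]$, a routine application of $1+u\leq e^u$ absorbs it into an exponential uniformly in $t\in[0,T]$, producing the desired $\tilde\rho\in\bR$ depending only on $\varrho,\rho,C,T$. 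The main obstacle is precisely this scale-matching in the Young inequality: a smaller $\varepsilon$ inflates the perturbation factor $(1+1/\varepsilon)^{\varrho-1}$, a larger $\varepsilon$ spoils the multiplicative constant on $V(\tilde\Phi)$, and the polynomial gain $t^\varrho$ in the closeness bound is exactly what is needed for $\varepsilon=t$ to yield a residual of order $t$ that can be exponentiated.
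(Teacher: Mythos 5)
Your proof is correct. Note that the paper does not actually prove this lemma: it is stated as a special case of \cite[Lemma~2.18]{hutzenthaler:p:2015} and used as a black box, so there is no in-paper argument to compare against. Your self-contained derivation --- triangle inequality, the weighted convexity estimate $(a+b)^\varrho\le(1+\varepsilon)^{\varrho-1}a^\varrho+(1+1/\varepsilon)^{\varrho-1}b^\varrho$, absorption of the additive $1$ into the factor $(1+\varepsilon)^{\varrho-1}\ge 1$ so that $V(\Phi)\le(1+\varepsilon)^{\varrho-1}V(\tilde\Phi)+(1+1/\varepsilon)^{\varrho-1}\|\Phi-\tilde\Phi\|_*^\varrho$, and the scale choice $\varepsilon=t$ exploiting $(1+1/t)^{\varrho-1}t^\varrho=t(1+t)^{\varrho-1}$ --- is precisely the standard perturbation argument behind the cited result, and every step checks out: both hypotheses are invoked exactly on the admissible set $\{1+\|x\|_*^\varrho\le t^{-\alpha}\}$, and the mismatch between the Euclidean norm in the closeness hypothesis and $\|\cdot\|_*$ in the Lyapunov function is correctly disposed of by norm equivalence on $\bR^{2n}$. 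One small point to spell out in the final absorption: for $\rho<0$ the factoring $e^{\rho t}+(C')^\varrho t\le e^{\rho t}(1+(C')^\varrho t)$ fails, so one should first bound $e^{\rho t}\le e^{\max(\rho,0)t}$ and then use $1+u\le e^u$, which yields the admissible constant $\tilde\rho=(\varrho-1)+\max(\rho,0)+(C')^\varrho$; this is routine and does not affect the validity of the argument.
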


\subsection{Convergence of the position and velocity processes}\label{subsec:convergence_rv}
In this subsection we verify the strong convergence of $\tilde X^N=(\tilde r^N,\tilde v^N)^\top$ towards $X=(r,v)^\top$ in the sense that $\lim_{N\to\infty}\sup_{t\in[0,T]} \mathbb E\big(\|X(t)-\tilde X^N(t)\|^{p}\big)=0$ for all $p\in[1,\infty)$.
To this end, we show that the half-explicit drift-truncated Euler scheme \eqref{eq:approximation}, reformulated in Section~\ref{subsec:SDEsetting}, satisfies the abstract requirements in the auxiliary results from Section~\ref{subsec:auxiliary}.
In the sequel we always suppose that the assumptions in Section~\ref{subsec:assumptions} are fulfilled and consider the setting described in Section~\ref{subsec:SDEsetting}. 


\begin{lemma}\label{lem:varifyconsistent}
The increment function $\phi=\phi_{\cM}$ of the half-explicit drift-truncated Euler scheme, given by \eqref{eq:algtype} and \eqref{eq:schemeinterpret}, is $(\drift,\diff)$-consistent w.r.t.\ Brownian motion. 
\end{lemma}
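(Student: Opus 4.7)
The plan is to apply Lemma~\ref{lem:jentzen324} to the explicit decomposition \eqref{eq:algtype}--\eqref{eq:schemeinterpret} of $\phi_\cM$. Fix a non-empty compact set $K\subset\bR^{2n}$. Since $\eta_2\equiv 1$, the two conditions in \eqref{eq:estilem324} involving $\eta_2$ reduce to $\bE[\diff(x)w(t)]=0$ (which holds because $w(t)$ is centered) together with the uniform-in-$K$ bound $\bE\|\diff(x)w(t)\|\leq\|\diff(x)\|\sqrt{\ell t}$, which follows from the local boundedness of $\diff$; these are therefore trivial.

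For the $\eta_1$-condition I would note that, by Assumption~\ref{ass:aB}, the functions $x\mapsto\|x^{(2)}\|$, $x\mapsto\|x^{(2)}\|^2$ and $x\mapsto\|a(x^{(1)},x^{(2)})\|$ are continuous, hence uniformly bounded on $K$. Consequently, for all $t$ small enough (depending only on $K$ and $\Ceta$), the truncation function~\eqref{eq:defeta} satisfies $\eta_1(x,t,w)=\eta(x^{(1)},x^{(2)},t)=1$ for every $x\in K$ and every $w\in\bR^\ell$, so $\sup_{x\in K}\bE|\eta_1(x,t,w(t))-1|=0$ eventually.

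The main step is the control of $\eta_0$. For $x\in K$ with $(x,t)\notin\cD$ the indicator $\mathbbm 1_\cD$ in \eqref{eq:schemeinterpret} vanishes, so I only need to bound $\eta_0$ where $(x,t)\in\cD$, precisely the region on which the estimates \eqref{eq:estLambdarem} from Theorem~\ref{theo:mainonestep} are available. Combining those bounds with the uniform estimate $\|M^{-1}\nabla g(x^{(1)})\|\leq C_g$ from \eqref{eq:C_g} and absorbing the $\|(r,v)\|$-dependencies using the compactness of $K$, I obtain a constant $C_K\in(0,\infty)$ such that
\begin{align*}
\|\eta_0(x,t,w(t))\|\leq C_K\bigl(t^2+t\,\|w(t)\|\bigr)
\end{align*}
uniformly for $x\in K$ and all $t\in(0,T]$. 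Taking expectations and using $\bE\|w(t)\|\leq\sqrt{\ell t}$ gives $\sup_{x\in K}\bE\|\eta_0(x,t,w(t))\|\leq C_K(t^2+t^{3/2})$, so $t^{-1/2}\sup_{x\in K}\bE\|\eta_0\|\to 0$ as $t\searrow 0$. For the second half of the $\eta_0$-condition I would exploit that $\hat\kappa(x^{(1)},x^{(2)},t)$ and $\lambdarem(x^{(1)},x^{(2)},t)$ are deterministic in $(x,t)$ while $\bE[\Lambdarem(x^{(1)},x^{(2)},t)\,w(t)]=0$ because $\Lambdarem$ is deterministic and $w(t)$ is centered; this yields $\|\bE[\eta_0(x,t,w(t))]\|\leq C_K t^2$, hence $t^{-1}\sup_{x\in K}\|\bE[\eta_0]\|\to 0$, and completes the verification of \eqref{eq:estilem324}.

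The only real piece of bookkeeping is that $\cD$ does not cover $\bR^{2n}\times(0,\infty)$, so a compact $K$ may contain points outside every horizontal slice of $\cD$; however, the indicator in \eqref{eq:schemeinterpret} kills the $\eta_0$-contribution on such points automatically, so no additional work is needed. Beyond that, the lemma is a direct consequence of the sharp estimates for $\hat\kappa$, $\lambdarem$ and $\Lambdarem$ provided by Theorem~\ref{theo:mainonestep}.
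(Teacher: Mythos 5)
Your proposal is correct and follows essentially the same route as the paper: both apply Lemma~\ref{lem:jentzen324} and verify \eqref{eq:estilem324} using the Lagrange multiplier estimates \eqref{eq:estLambdarem}, the triviality of the $\eta_2$-conditions, and the uniform convergence $\eta(x,t)\to1$ on compacts. The only cosmetic differences are that the paper establishes the stronger bound $t^{-1}\sup_{x\in K}\bE\|\eta_0(x,t,w(t))\|\to0$ directly (which covers both $\eta_0$-conditions at once, making the centering of $\Lambdarem w(t)$ unnecessary), and phrases the $\eta_1$-argument as a limsup computation rather than observing that $\eta\equiv1$ on $K$ for small $t$.
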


\begin{proof}
We fix a non-empty compact set $K\subset\bR^{2n}$ and verify the assumptions in Lemma~\ref{lem:jentzen324}.
As a consequence of the definition of $\eta_0$ in \eqref{eq:schemeinterpret}, Assumption~\ref{ass:g}, and the estimates \eqref{eq:estLambdarem} of $\hat\kappa$, $\lambdarem$, $\Lambdarem$ in Theorem~\ref{theo:mainonestep}, there exists a constant $C\in(0,\infty)$ such that
\begin{align*}
&\limsup_{t\searrow 0}\bigg(\frac{1}{t}\sup_{x\in K}\bE\big[\|\eta_0(x,t,w(t))\|\big]\bigg)\\ &\leq C\limsup_{t\searrow 0}\bigg(\frac{1}{t}\sup_{x\in K}\bE\Big[\big(1+\|x\|^{\max(p_\lambda,2)}\big)t^2+(1+\|x\|^{p_\Lambda})\|w(t)\|t\Big]\bigg)=0,
\end{align*} 
which implies both assertions concerning $\eta_0$ in \eqref{eq:estilem324}. 
Since $\eta_2\equiv 1$ and $\diff$ is bounded on compact sets, the assertions concerning $\eta_2$ in \eqref{eq:estilem324} are obviously fulfilled. 
As $\eta_1(x,h,w)=\eta(x,h)$ in \eqref{eq:schemeinterpret} does not depend on $w\in\bR^\ell$, it is left to show that
$
\limsup_{t\searrow 0}\sup_{x\in K}|\eta(x,t)-1| =0.
$
For $x=(x^{(1)},x^{(2)})^\top\in\bR^{2n}$ let us set $f(x):=\max(\|x^{(2)}\|,\|x^{(2)}\|^2,\|a(x^{(1)},x^{(2)})\|)$, so that we can rewrite the truncation function $\eta$ defined in \eqref{eq:defeta} in the form
$\eta(x,t)=\min\big(1,C_\eta/(f(x)t)\big)$, $(x,t)\in\bR^{2n}\times(0,\infty)$.
Using the boundedness of $\eta$, the fact that $s\mapsto -\min(1,\Ceta/s)$ is continuous and non-decreasing on $(0,\infty)$, and the finitenesss of $\sup_{x\in K}f(x)$, we obtain
\begin{align*}
\limsup_{t\searrow 0}\sup_{x\in K}|\eta(x,t)-1| 
&=
\limsup_{t\searrow 0}\sup_{x\in K}(1-\eta(x,t))\\
&=1-\min\Big(1,\frac{\Ceta}{\limsup_{t\searrow 0}\sup_{x\in K}(
f(x)t)}\Big)=1-1=0.
\end{align*} 
This finishes the proof of Lemma~\ref{lem:varifyconsistent}.
\end{proof}

In the next lemma we verify the $\alpha$-semi-$V$-stabilty of the half-explicit drift-truncated Euler scheme for the Lyapunov-type function $V=(1+\|\cdot\|_*^\varrho)$ with $\varrho\in[3,\infty)$ and a certain range of $\alpha$.
To this end, we use the comparison principle from Lemma~\ref{lem:jentzencomparisonprinciple} and the fact that the standard Euler-Maruyama scheme for the inherent SDE in the form \eqref{eq:SDEstrongsetting}, i.e.,
\begin{align*}
\Phi_{\mathrm{EM}}(x,h,w)=x+\phi_{\mathrm{EM}}(x,h,w)=x+\drift(x)h+\diff(x)w,
\end{align*}
is $\alpha$-semi-V-stable w.r.t.~Brownian motion for  $V=(1+\|\cdot\|_*^\varrho)$ with $\varrho\in[3,\infty)$ and all 
$\alpha\in (0,\varrho/(2\max(1,p_a-1))]$.
The last assertion follows from the growth properties \eqref{eq:assbabB} of the coefficients $\drift$ and $\diff$ and a straightforward application of \cite[Theorem~2.13]{hutzenthaler:p:2015}, compare \cite[Corollary~2.16]{hutzenthaler:p:2015}. 
\begin{lemma}\label{lem:verifystable}
	Fix $\varrho\in[3,\infty)$, let $\phi_{\cM}\colon\mathbb R^{2n}\times [0,T] \times \mathbb R^\ell\to \mathbb R^{2n}$, defined by \eqref{eq:algtype} and \eqref{eq:schemeinterpret}, be the increment function of the half-explicit drift truncated Euler scheme and let $\phi_{\mathrm{EM}}\colon\mathbb R^{2n}\times [0,T] \times \mathbb R^\ell\to \mathbb R^{2n}$, defined by \eqref{eq:algtype} and \eqref{eq:schemeEM}, be the increment function of the Euler Maruyama scheme. Then, for all $\alpha\in(0,\varrho/(2p_a+1)]$ there exists $C\in[0,\infty)$ such that the estimate
	\begin{align}\label{eq:diffEMHEDT}
	\Bigl(\mathbb E\big(\bigl\|\phi_{\cM}(x,t,w(t))-\phi_{\mathrm{EM}}(x,t,w(t))\bigr\|^{\varrho}\big)\Bigr)^{1/\varrho}\leq C\cdot t\cdot\big(1+\|x\|_*^{\varrho}\big)^{1/\varrho}
	\end{align}
	holds for all $(x,t)\in\bR^{2n}\times (0,T]$ with $(1+\|x\|_*^{\varrho})\leq t^{-\alpha}$. In particular, $\Phi_{\cM}$ defined by $\Phi_{\cM}(x,h,w)=x+\phi_{\cM}(x,h,w)$ is $\alpha$-semi-$(1+\|\cdot\|_*^{\varrho})$-stable w.r.t.\ Brownian motion for all $\alpha\in(0,\varrho/(2p_a+1)]$.
\end{lemma}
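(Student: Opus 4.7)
The plan is to apply the comparison principle Lemma~\ref{lem:jentzencomparisonprinciple} with the Euler--Maruyama map $\tilde\Phi(x,h,w):=x+\phi_{\mathrm{EM}}(x,h,w)$ as the reference. By the one-sided linear growth and polynomial growth conditions~\eqref{eq:assbabB} for $\drift$ and $\diff$ and a direct application of \cite[Corollary~2.16]{hutzenthaler:p:2015}, $\tilde\Phi$ is $\alpha$-semi-$(1+\|\cdot\|_*^\varrho)$-stable with respect to Brownian motion for every $\varrho\in[3,\infty)$ and every $\alpha\in(0,\varrho/(2\max(1,p_a-1))]$; this range contains $(0,\varrho/(2p_a+1)]$. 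Consequently, once the $L^\varrho$-comparison estimate~\eqref{eq:diffEMHEDT} is established, the semi-stability of $\Phi_\cM$ follows immediately.

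Comparing the definitions \eqref{eq:algtype}, \eqref{eq:schemeinterpret} and \eqref{eq:schemeEM} gives the decomposition
\begin{equation*}
\phi_\cM(x,h,w)-\phi_{\mathrm{EM}}(x,h,w)=\eta_0(x,h,w)+\bigl(\eta(x,h)-1\bigr)\drift(x)h,
\end{equation*}
so it is enough to bound both summands in $L^\varrho$. For the perturbation term $\eta_0$, I would invoke the Lagrange-multiplier estimates~\eqref{eq:estLambdarem} of Theorem~\ref{theo:mainonestep} together with the bound $\|M^{-1}\nabla g(x^{(1)})\|\le C_g$ on $\cM^\varepsilon$ (where $\eta_0$ is supported through $\mathbbm 1_\cD$) and the Gaussian moment $(\bE\|w(h)\|^\varrho)^{1/\varrho}\le C\sqrt h$. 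Since $\Lambdare$ is deterministic, this yields an estimate of the schematic form
\begin{equation*}
\bigl(\bE\|\eta_0(x,h,w(h))\|^\varrho\bigr)^{1/\varrho}\le C\bigl((1+\|x\|_*^{\max(2,p_\lambda)})h^2+(1+\|x\|_*^{p_\Lambda})h^{3/2}\bigr).
\end{equation*}
Under $1+\|x\|_*^\varrho\le h^{-\alpha}$ each factor $\|x\|_*^q$ converts to $h^{-\alpha q/\varrho}$, and a careful but routine inspection of the individual terms appearing in Step~5 of the proof of Theorem~\ref{theo:mainonestep} shows that the relevant powers can be taken as $p_\lambda\le p_a+2$ (the worst contribution coming from $\eta^3\|v\|^2\|a\|h^3\lesssim (1+\|x\|_*^{p_a+2})h^2$) and that $p_\Lambda$ is controlled by analogous contributions. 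The threshold $\alpha\le\varrho/(2p_a+1)$ then ensures that each resulting $h$-exponent is at least $1-\alpha/\varrho$, so that the right-hand side is dominated by $Ch(1+\|x\|_*^\varrho)^{1/\varrho}$.

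For the truncation summand, the key deterministic observation is that the structure~\eqref{eq:defbabB} of $\drift$ combined with Assumption~\ref{ass:g} and the linear/polynomial growth of $a$ yields
\begin{equation*}
\|\drift(x)\|_*\le Cf(x),\qquad f(x):=\max\bigl(\|x^{(2)}\|,\|x^{(2)}\|^2,\|a(x^{(1)},x^{(2)})\|\bigr),
\end{equation*}
where $f$ is precisely the quantity in the denominator of~\eqref{eq:defeta}. Since $\eta(x,h)<1$ forces $f(x)h>\Ceta$, the combination of $|\eta(x,h)-1|\le 1$ with the Markov-type bound $\mathbbm 1_{\{f(x)h>\Ceta\}}\le (f(x)h/\Ceta)^{\varrho-1}$ gives
\begin{equation*}
\|(\eta(x,h)-1)\drift(x)\|_*h\le Cf(x)^\varrho h^\varrho\le C(1+\|x\|_*^\varrho)^{\max(2,p_a)}h^\varrho.
\end{equation*}
Under the constraint $1+\|x\|_*^\varrho\le h^{-\alpha}$ this reduces to $Ch^{\varrho-\alpha\max(2,p_a)}$, and a short calculation using $\varrho\ge 3$ shows that $\alpha\le\varrho/(2p_a+1)$ implies $\varrho-\alpha\max(2,p_a)\ge 1-\alpha/\varrho$, as required.

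The main obstacle I anticipate is the exponent bookkeeping in the $\eta_0$ part: one has to identify from Step~5 of the proof of Theorem~\ref{theo:mainonestep} the exact powers $p_\lambda,p_\Lambda$ of $\|x\|_*$ as functions of $p_a$ (possibly by splitting the estimate~\eqref{eq:defLambdarem} termwise rather than bundling everything into a single exponent), and then verify that the single threshold $\alpha\le\varrho/(2p_a+1)$ simultaneously dominates the $\eta_0$-contribution, the truncation error, and the stability constraint of $\Phi_{\mathrm{EM}}$. Once \eqref{eq:diffEMHEDT} is in place, Lemma~\ref{lem:jentzencomparisonprinciple} immediately supplies the claimed $\alpha$-semi-$(1+\|\cdot\|_*^\varrho)$-stability of $\Phi_\cM$.
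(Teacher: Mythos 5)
Your proposal is correct and follows essentially the same route as the paper: reduce to the comparison principle of Lemma~\ref{lem:jentzencomparisonprinciple} against the Euler--Maruyama map (whose $\alpha$-semi-$(1+\|\cdot\|_*^\varrho)$-stability for $\alpha\le\varrho/(2\max(1,p_a-1))$ follows from \eqref{eq:assbabB} and \cite[Corollary~2.16]{hutzenthaler:p:2015}), split the difference into the perturbation $\eta_0$ and the truncation error $(\eta-1)\drift(x)h$, bound the former via \eqref{eq:estLambdarem} and the latter via the growth of $a$, and do the exponent bookkeeping under $1+\|x\|_*^\varrho\le t^{-\alpha}$. The only (harmless) deviation is your indicator/Markov bound $|\eta-1|\le(f(x)h/\Ceta)^{\varrho-1}$ for the truncation term, where the paper instead uses $|\eta-1|\le\eta^{-1}-1\le C_\eta^{-1}f(x)h$ to get an $h^2$-order bound; both satisfy the required threshold $\alpha\le\varrho/(2p_a+1)$.
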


\begin{proof}
By the construction of $\drift$, $\diff$ in Lemma~\ref{lem:extended_coefficients} and the definition of $\phi_{\mathrm{EM}}$, $\phi_\cM$, we have $\phi_{\mathrm{EM}}(x,t,w(t))=\phi_{\cM}(x,t,w(t))=0$ for all $x\in\bR^{2n}\setminus(T\cM)^\varepsilon$, $t\in[0,T]$. 
Therefore, consider $x\in (T\cM)^\varepsilon$, $t\in[0,T]$. 
Throughout this proof, let $\eta_0$, $\eta_1$ and $\eta_2$ be the functions corresponding to the half-explicit drift truncated Euler scheme as defined in \eqref{eq:schemeinterpret}.
We also use the notation 
\begin{align*}
k(x):=M^{-1}\nabla g(x^{(1)})G_M^{-1}(x^{(1)})D^2g(x^{(1)})(x^{(2)},x^{(2)}),
\end{align*}
$x=(x^{(1)},x^{(2)})^\top\in\bR^{2n}$, and denote by $C$ a finite constant depending only on $p$, $T$, $C_a$, $C_B$, $C_g$, $M$, $\alpha$ that may change its value with every new appearance. Since the globalization factor in the definition \eqref{eq:defAB} of $\drift$ is bounded by one, we have
\begin{equation}\label{eq:verifystable1}
\begin{aligned}
&\bigl\|\phi_{\cM}(x,t,w(t))-\phi_{\mathrm{EM}}(x,t,w(t))\bigr\|_{L_\varrho}\\
&\leq \|\eta_0(x,t,w(t))\|_{L_\varrho}+\big\|(\eta(x,t)-1)\drift(x)\big\|t+0\\
&\leq C\Big(\big\|\eta_0^{(1)}(x,t,w(t))\big\|_{L_\varrho}+\big\|\eta_0^{(2)}(x,t,w(t))\big\|_{L_\varrho}\\
&\quad+|\eta(x,t)-1|\big\{\|x^{(2)}\|+\big\|a(x^{(1)},x^{(2)})\|+\|k(x)\|\big\}t\Big).
\end{aligned}
\end{equation}
Using the estimate \eqref{eq:estLambdarem} for $\|\hat\kappa\|$, $\|\hat\lambda\|$, $\|\hat\Lambda\|$, we obtain
\begin{equation}\label{eq:verifystable2}
\begin{aligned}
\big\|\eta_0^{(1)}(x,t,w(t))\big\|_{L_\varrho}&\leq C \big(1+\|x\|^2\big) t^2,\\
\big\|\eta_{0}^{(2)}(x,t,w(t))\big\|_{L_\varrho}&\leq C \big(1+\|x\|^{p_\lambda}\big)t^2+ C\big(1+\|x\|^{p_\Lambda}\big) t^{3/2} \|w(1)\|_{L_\varrho}.
\end{aligned}
\end{equation}
Moreover, the definition \eqref{eq:defeta} of $\eta$, the polynomial growth property of $a$ in Assumption~\ref{ass:aB}, and the fact that  $\eta(x,t)\in (0,1]$ imply
\begin{align*}
|\eta(x,t)-1|&=|1-\eta(x,t)^{-1}|\cdot\eta(x,t)\\
&\leq \eta(x,t)^{-1}-1\\
&=\max\Big(0,\,C_\eta^{-1}\max\big(\|x^{(1)}\|,\|x^{(1)}\|^2,\|a(x^{(1)},x^{(2)})\|\big)t-1\Big)\\
&\leq C_\eta^{-1}\big(\|x^{(1)}\|+\|x^{(1)}\|^2+\|a(x^{(1)},x^{(2)})\|\big)t\\
&\leq C(1+\|x\|^2+\|x\|^{p_a})t
\end{align*}
and further, using also the estimates for the norms of $\nabla g(x^{(1)})$, $G_M^{-1}(x^{(1)})$ and $D^2g(x^{(1)})$ following from Assumption~\ref{ass:g},
\begin{equation}\label{eq:verifystable3}
\begin{aligned}
|\eta(x,t)-1| \|x^{(2)}\| t
&\leq C(1+\|x\|^3+\|x\|^{p_a+1})t^2,\\
|\eta(x,t)-1| \|a(x^{(1)},x^{(2)})\| t
&\leq C(1+\|x\|^{p_a+2}+\|x\|^{2p_a})t^2,\\
|\eta(x,t)-1| \|k(x)\| t
&\leq C(1+\|x\|^{4}+\|x\|^{p_a+2})t^2.
\end{aligned}
\end{equation}
As a consequence of the estimate \eqref{eq:addestilambda} and the growth properties of $a$ and $B$ in Assumption \ref{ass:aB}, the exponents $p_\lambda$ and $p_\Lambda$ in the estimate \eqref{eq:estLambdarem} of $\|\hat\kappa\|$, $\|\hat\lambda\|$, $\|\hat\Lambda\|$ can be chosen as $p_\lambda=\max(p_a+1,3)$ and $p_\Lambda=2$. 
Thus, combining \eqref{eq:verifystable1}, \eqref{eq:verifystable2}, \eqref{eq:verifystable3} and using the equivalence of the norms $\|\cdot\|$ and $\|\cdot\|_*$ on $\bR^{2n}$, we have 
\begin{equation}\label{eq:verifystable4}
\begin{aligned}
\bigl\|\phi_{\cM}(&x,t,w(t))-\phi_{\mathrm{EM}}(x,t,w(t))\bigr\|_{L_\varrho}\\
&\leq C t (1+\|x\|_*^{\varrho})^{1/\varrho}\Big[t(1+\|x\|_*^{2p_a+1})+t^{1/2}(1+\|x\|_*)\Big].
\end{aligned}
\end{equation}
Note that if $\alpha\in(0,\varrho/(2p_a+1)]$ and 
$(x,t)\in\bR^{2n}\times[0,T]$ is such that 
$(1+\|x\|_*^\varrho)\leq t^{-\alpha}$, then 
\begin{equation}\label{eq:verifystable5}
\begin{aligned}
t(1+\|x\|_*^{2p_a+1}&)\leq C \big(t^{\frac{\varrho}{2p_a+1}}(1+\|x\|_*^\varrho)\big)^{\frac{2p_a+1}{\varrho}}\leq C \big(t^{\frac{\varrho}{2p_a+1}}t^{-\alpha}\big)\big)^{\frac{2p_a+1}{\varrho}}\leq C,
\\
t^{1/2}(1+\|x\|_*)&\leq C\big(t^{\varrho/2}(1+\|x\|_*^\varrho)\big)^{1/\varrho}\leq C\big(t^{\varrho/2}t^{-\alpha}\big)^{1/\varrho}\leq C.
\end{aligned}
\end{equation}
As remarked above, the Euler-Maruyama scheme is  $\alpha$-semi-$(1+\|\cdot\|_*^\varrho)$-stable w.r.t.\ Brownian motion for all $\alpha\in(0,\varrho/(2\max(1,p_a-1))]$. The estimates \eqref{eq:verifystable4}, \eqref{eq:verifystable5} and Lemma~\ref{lem:jentzencomparisonprinciple} together with the fact that $\varrho/(2p_a+1)<\varrho/(2\max(1,p_a-1))$ thus imply that the half-explicit drift-truncated Euler scheme is $\alpha$-semi-$(1+\|\cdot\|_*^\varrho)$-stable w.r.t.\ Brownian motion for all $\alpha\in(0,\varrho/(2p_a+1)]$.
\end{proof}

Next, we verify the moment growth condition \eqref{eq:momentcor314} in Proposition~\ref{cor:mainjentzen} in the context of our setting.
\begin{lemma}\label{cor:strongrate}
	For $N\in\bN$ let $(X^N	_k)_{k\in\{0,\ldots,N\}}$ be the solution to the half-explicit drift-truncated Euler scheme in the form \eqref{eq:appstrongdef}, with increment function $\phi=\phi_\cM$ given by \eqref{eq:algtype} and \eqref{eq:schemeinterpret}.
	 Let $p\in[1,\infty)$, $\varrho\in [2,\infty)$, $\alpha\in [2p+1,\infty)$.
	Then the moment condition \eqref{eq:momentcor314} is fulfilled, i.e.,
	\begin{align*}
	\limsup_{N\to\infty}\Bigl(N^{(1-\alpha)(1-1/\varrho)}\sup_{k\in\{0,\dots,N\}} \big(\mathbb E(\|X_{k}^N\|^{p\varrho})\big)^{1/\varrho}\Bigr)<\infty.
	\end{align*}
\end{lemma}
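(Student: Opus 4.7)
The plan is to establish the \emph{uniform-in-$(N,k)$} moment bound $\sup_{N\in\bN}\sup_{0\leq k\leq N}\bE\|X_k^N\|_*^{p\varrho}<\infty$, which is strictly stronger than what the lemma claims: since $\alpha\geq 2p+1\geq 3$ and $\varrho\geq 2$ force $(1-\alpha)(1-1/\varrho)\leq -p<0$, the prefactor $N^{(1-\alpha)(1-1/\varrho)}$ decays to zero as $N\to\infty$, so any uniform moment bound implies \eqref{eq:momentcor314}.

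I would derive the uniform bound via a discrete Gronwall argument based on a one-step estimate of the form $\bE\bigl[\|X_{k+1}^N\|_*^{p\varrho}\mid\cF_{kh}\bigr]\leq(1+Ch)\|X_k^N\|_*^{p\varrho}+Ch$ with $C$ independent of $(k,N)$. Starting from $X_{k+1}^N=X_k^N+\phi_\cM(X_k^N,h,\Delta_h w_k)$, I would decompose the increment as $\phi_\cM=\eta(X_k^N,h)\drift(X_k^N)h+\diff(X_k^N)\Delta_h w_k+\tilde\phi_k$, where $\tilde\phi_k$ collects the $\hat\kappa_{k+1}$, $\lambdare_{k+1}$ and $\Lambdare_{k+1}\Delta_h w_k$ contributions from \eqref{eq:nonlineardecompose_stoch} and \eqref{eq:schemeinterpret}. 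The crucial structural fact, flagged in Remark~\ref{rem:solvingapproximation}, is that $\lambdare_{k+1}$ and $\Lambdare_{k+1}$ are $(\cF_{kh})$-predictable, so conditionally on $\cF_{kh}$ all randomness in $\phi_\cM$ enters linearly through $\Delta_h w_k$ with $\cF_{kh}$-measurable coefficients; in particular $\bE[\diff(X_k^N)\Delta_h w_k+\Lambdare_{k+1}\Delta_h w_k\mid\cF_{kh}]=0$.

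Expanding $\|X_{k+1}^N\|_*^{p\varrho}=(\|X_k^N\|_*^2+2\langle X_k^N,\phi_\cM\rangle_*+\|\phi_\cM\|_*^2)^{p\varrho/2}$ and taking conditional expectation, the linear cross term produces a drift contribution controlled by the one-sided growth $\langle x,\drift(x)\rangle_*\leq C(1+\|x\|_*^2)$ from \eqref{eq:assbabB} (with the factor $\eta\leq 1$ harmlessly absorbed) plus $O(h^2)$-terms from $\hat\kappa,\lambdare$. The quadratic term gives, after conditioning, an $O(h)(1+\|X_k^N\|_*^2)$ contribution via the linear growth of $\diff$, the identity $\bE\|\Delta_h w_k\|^2=\ell h$, and the bound $\|\Lambdare\|\leq C(1+\|(r,v)\|)$ from \eqref{eq:estLambdarem}; the truncated drift is absorbed using $\|\eta\drift h\|^2\leq C_\eta\,\eta\|\drift\|h\leq C_\eta h\|\drift(X_k^N)\|$. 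Higher-order terms from the binomial/Taylor expansion of $(\,\cdot\,)^{p\varrho/2}$ are handled via H\"older together with the Gaussian moment bounds $\bE\|\Delta_h w_k\|^{2j}\leq C_jh^j$ and yield further $O(h)$ contributions.

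The principal obstacle is the polynomial-in-$\|x\|_*$ growth of $\lambdare,\Lambdare$ with possibly large exponents $p_\lambda,p_\Lambda$: a naive bound would spoil the Gronwall form. This is defused by the $\min(\,\cdot\,,1)$ respectively $\min(\,\cdot\,,1+\|(r,v)\|)$ structure in \eqref{eq:estLambdarem}, exploited via $\min(a,b)\leq a^\theta b^{1-\theta}$ to trade polynomial growth in $\|X_k^N\|_*$ for extra powers of $h$ (for instance $\|\hat\kappa\|^2\leq Ch^2(1+\|X_k^N\|_*^2)$ after choosing $\theta=1/2$). To close the circular dependence on higher moments of $\|X_k^N\|$ entering the right-hand side when $p\varrho$ is small relative to $p_a,p_\lambda,p_\Lambda$, one first runs the argument at a moment order large enough to dominate all polynomial exponents and then deduces the lower-order bounds by Jensen's inequality. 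Discrete Gronwall then produces the uniform bound, which in combination with the negativity of $(1-\alpha)(1-1/\varrho)$ noted in the first paragraph establishes \eqref{eq:momentcor314}.
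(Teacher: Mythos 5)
There is a genuine gap: the one-step estimate $\bE\bigl[\|X_{k+1}^N\|_*^{p\varrho}\mid\cF_{kh}\bigr]\leq(1+Ch)\|X_k^N\|_*^{p\varrho}+Ch$ on which your whole argument rests cannot hold with $C$ independent of $h$. The obstruction is the truncated drift increment $\eta(X_k^N,h)\drift(X_k^N)h$: by the definition \eqref{eq:defeta} of $\eta$, in the regime where the truncation is active (i.e.\ when $\max(\|v_k\|,\|v_k\|^2,\|a(r_k,v_k)\|)\,h>\Ceta$) one has $\eta\,\|\drift(X_k^N)\|\,h$ of order $\Ceta$, a constant \emph{independent of $h$}, and not of order $h(1+\|X_k^N\|_*)$. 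A single step can therefore displace the iterate by an $O(1)$ amount, so the additive term in any recursion of your form is $O(1)$ rather than $O(h)$, and discrete Gronwall then yields only $\bE\|X_k^N\|^{p\varrho}=O(k^{p\varrho})$ --- which is precisely the linear-in-$k$ bound $\|X_k^N\|_{L_{p\varrho}}\leq Kk$ the paper proves, and which is why the negative power $N^{(1-\alpha)(1-1/\varrho)}\leq N^{-p}$ is actually needed in \eqref{eq:momentcor314}. Your alternative bound $\|\eta\drift(X_k^N)h\|^2\leq \Ceta h\|\drift(X_k^N)\|$ does produce a factor $h$, but at the price of $\|\drift(x)\|_*\leq C(1+\|x\|_*^{\max(2,p_a)})$ from \eqref{eq:assbabB}, so the recursion at moment order $q$ involves moments of order $q-2+\max(2,p_a)>q$ whenever $p_a>2$; this exponent excess is the same at every $q$, so it is not closed by ``running the argument at a higher moment order and using Jensen''. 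Likewise, interpolating $\min(a,b)\leq a^{\theta}b^{1-\theta}$ on the drift only gives $O(h^{2/\max(2,p_a)})(1+\|x\|_*^2)$, and $\prod_{k<N}(1+Ch^{s})\to\infty$ as $h\to0$ for $s<1$. (The interpolation does work for $\hat\kappa$ and $\lambdare$, which genuinely carry the $\min(\,\cdot\,h^2,1)$ structure of \eqref{eq:estLambdarem}; the problem is the main truncated drift term.)

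The paper circumvents this by not aiming for a uniform bound at all: it derives the multiplicative recursion $\|X_{k+1}^N\|\leq(\|X_k^N\|+2C)\exp\bigl(C^2\|\Delta_hw_k\|^2+\alpha_k(X_k^N)\bigr)$, in which the additive constant $2C$ absorbs the bounded-but-not-small truncated drift and the perturbations $\vartheta_1,\vartheta_2$, and then controls the exponentials via Gaussian exponential moment bounds (valid for $N$ large) and a conditional log-normal computation exploiting the $\cF_{kh}$-measurability of $\Lambdare_{k+1}$ --- the structural fact you correctly identify, but deployed multiplicatively rather than additively. This gives $\|X_k^N\|_{L_{p\varrho}}\leq K_{p\varrho}k$, and the prefactor $N^{(1-\alpha)(1-1/\varrho)}\leq N^{-p}$ then kills the resulting $N^{p}$ growth. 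The uniform moment bound you take as your goal is obtained in the paper only as an \emph{output} of Proposition~\ref{cor:mainjentzen} (consistency plus semi-stability plus this polynomial moment condition); making it the starting point of the proof of the moment condition puts the cart before the horse.
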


\begin{proof}
We rewrite the scheme \eqref{eq:appstrongdef}--\eqref{eq:schemeinterpret} as
\begin{align*}
X_{k+1}^N&=X_k^N+\eta(X_{k}^N,h)\drift(X_{k}^N)h+\diff(X_{k}^N)\Delta_hw_k\\
&\quad+\vartheta_1(X_{k}^N,h)+\vartheta_2(X_{k}^N,h)\Delta_hw_k,
\end{align*}
where $\vartheta_1\colon\bR^{2n}\times [0,T]\to\bR^{2n}$ and $\vartheta_2\colon\bR^{2n}\times [0,T]\to\bR^{2n\times\ell}$ are defined by 
\begin{align*}
\vartheta_1(x,h)&:=\mathbbm 1_{\cD}(x,h)\begin{pmatrix}M^{-1} \nabla g(x^{(1)}) \hat \kappa(x,h)\\M^{-1} \nabla g(x^{(1)}) \lambdarem(x,h)\end{pmatrix},\\
\vartheta_2(x,h)&:=\mathbbm 1_{\cD}(x,h)\begin{pmatrix}0\\ M^{-1} \nabla g(x^{(1)})\Lambdarem(x,h)\end{pmatrix}.
\end{align*}
As before,
the open neighborhood 
$\cD\subset\bR^{2n}\times(0,\infty)$ 
of $T\cM\times(0,\infty)$ 
and 
the mappings 
$\hat\kappa\colon\cD\to\bR^m$, $\lambdarem\colon\cD\to\bR^m$, $\Lambdarem\colon\cD\to\bR^{m\times\ell}$ are given by Theorem~\ref{theo:mainonestep}
and $\eta$ is the truncation function given by \eqref{eq:defeta}.
Observe that
\begin{equation}\label{eq:moment_bound_1}
\begin{aligned}
&\|X_{k+1}^N\|^2\\
&= \big\|X_k^N+\eta(X_{k}^N,h)\drift(X_k^N)h+\vartheta_1(X_k^N,h)\big\|^2+\big\|\big(\diff(X_k^N)+\vartheta_2(X_k^N,h)\big)\Delta_hw_k\big\|^2\\
&\quad+2\Big\langle X_k^N+\drift(X_k^N)\eta(X_{k}^N,h)h+\vartheta_1(X_k^N,h),\,\big(\diff(X_k^N)+\vartheta_2(X_k^N,h)\big)\Delta_hw_k\Big\rangle\\
&\leq \big\|X_k^N+\eta(X_{k}^N,h)\drift(X_k^N)h+\vartheta_1(X_k^N,h)\big\|^2+\big\|\eta(X_{k}^N,h)\drift(X_k^N)h+\vartheta_1(X_k^N,h)\big\|^2\\
&\quad+2\big\|\big(\diff(X_k^N)+\vartheta_2(X_k^N,h)\big)\Delta_hw_k\big\|^2+2\big\langle X_k^N,\,\big(\diff(X_k^N)+\vartheta_2(X_k^N,h)\big)\Delta_hw_k\big\rangle.
\end{aligned}
\end{equation}
In the last step we use the estimate $\langle x+y,z\rangle\leq \langle x,z\rangle+\|y\|\|z\|\leq \langle x,z\rangle+\|y\|^2/2+\|z\|^2/2$ holding for all $x,y,z\in\mathbb R^{2n}$. Next, note that
there exists $C\in[1,\infty)$ such that, for all $(x,h)\in \bR^{2n}\times [0,T]$,
\begin{equation}\label{eq:moment_bound_2}
\begin{aligned}
\|\eta(x,h)\drift(x)h\|&\leq C/2, &&
&\|\diff(x)\|&\leq C/2(1+\|x\|),\\ \|\vartheta_1(x,h)\|&\leq C/2, &&
&\|\vartheta_2(x,h)\|&\leq C/2(1+\|x\|).
\end{aligned}
\end{equation}
The first estimate in \eqref{eq:moment_bound_2} is due the definition \eqref{eq:defeta} of $\eta$, the definition \eqref{eq:defAB} of $\drift$ in the proof of Lemma~\ref{lem:extended_coefficients}, and Assumption~\ref{ass:g} on $g$; the estimate concerning $\diff$ is a consequence of \eqref{eq:assbabB}; the estimates concerning $\vartheta_1$ and $\vartheta_2$ follow from the Lagrange multiplier estimates \eqref{eq:estLambdarem} in Theorem~\ref{theo:mainonestep}.
Combining \eqref{eq:moment_bound_1} and \eqref{eq:moment_bound_2} leads to
\begin{align*}
\|X_{k+1}^N\|^2
&\leq  (\|X_k^N\|+C)^2+C^2+2C^2 (1+\|X_k^N\|)^2\|\Delta_hw_k\|^2\\
&\quad+2\big\langle X_k^N, \big(\diff(X_k^N)+\vartheta_2(X_k^N,h)\big) \Delta_hw_k\big\rangle.
\end{align*}
For the sake of better readability we use the formal notation 
$$\alpha_k(x):=\frac{\big\langle x, \big(\diff(x)+\vartheta_2(x,h)\big) \Delta_hw_k\big\rangle}{(\|x\|+2C)^2},\quad x\in\bR^{2n}.$$
Since $(|\xi|+C)^2+C^2\leq (|\xi|+2C)^2$ and $(1+\xi)\leq \exp(\xi)$ holds for all $\xi\in\mathbb R$ we have 
\begin{align*}
\|X_{k+1}^N\|^2 &\leq \big(\|X_k^N\|+2C\big)^2
\big(1+2C^2\|\Delta_hw_k\|^2+2\alpha_k(X_k^N)\big)\\
&\leq \big(\|X_k^N\|+2C\big)^2
\operatorname{exp}\big(2C^2\|\Delta_hw_k\|^2+2\alpha_k(X_k^N)\big)
\end{align*}
and therefore
\begin{align*}
\|X_{k+1}^N\|\leq \big(\|X_k^N\|+2C\big)
\operatorname{exp}\big(C^2\|\Delta_hw_k\|^2+\alpha_k(X_k^N)\big).
\end{align*}
This yields by recursion
\begin{align}\label{eq:tmpsplitcorveri}
\begin{split}
\|X_{k}^N\|&\leq\|X_{0}^N\| \prod_{j=0}^{k-1}\operatorname{exp}\big(C^2\|\Delta_hw_j\|^2+\alpha_j(X_j^N)\big)\\
&\quad+2C\sum_{i=0}^{k-1}\prod_{j=i}^{k-1}\operatorname{exp}\big(C^2\|\Delta_hw_j\|^2+\alpha_j(X_j^N)\big)
\end{split}
\end{align}
for all $k\in\{1,2,\dots,N\}$. 
Further, by the monotonicity of the exponential function we have
$
\exp\big(\sum_{j=i}^{k-1}C^2\|\Delta_hw_j\|^2\big)\leq \exp\big(\sum_{j=0}^{k-1}C^2\|\Delta_hw_j\|^2\big).
$
Using also H\"older's inequality and recalling that $X_0^N=X_0=(r_0,v_0)^\top$
we obtain
\begin{equation}\label{eq:moment_bound_3}
\begin{aligned}
\|X_{k}^N\|_{L_{p\varrho}}
&\leq \Big\|\prod_{j=0}^{k-1}\operatorname{exp}\big(C^2\|\Delta_hw_j\|^2\big)\Big\|_{L_{2p\varrho}}\\
&\quad \bigg\|\|X_{0}\| \prod_{j=0}^{k-1}\exp\big(\alpha_j(X_j^N)\big)+2C\sum_{i=0}^{k-1}\prod_{j=i}^{k-1}\exp\big(\alpha_j(X_j^N)\big)\bigg\|_{L_{2p\varrho}}\\
&\leq \Big\|\operatorname{exp}\Big(\sum_{j=0}^{k-1}C^2\|\Delta_hw_j\|^2\Big)\Big\|_{L_{2p\varrho}}\bigg(\|X_{0}\|_{L_{4p\varrho}}\Big\|\exp\Big(\sum_{j=0}^{k-1}\alpha_j(X_j^N)\Big)\Big\|_{L_{4p\varrho}}\\
&\quad
+2C\sum_{i=0}^{k-1}\Big\|\operatorname{exp}\Big(\sum_{j=i}^{k-1}\alpha_j(X_j^N)\Big)\Big\|_{L_{2p\varrho}}\bigg).
\end{aligned}
\end{equation}
We use the following estimates which are proven below: There exists a constant $N_{p\varrho}\in \bN$ such that for all $N\in\bN$ with $N\geq N_{p\varrho}$ we have 
\begin{align}
\sup_{k\in\{1,\dots, N\}}\mathbb E\Big[\exp\Big(2p\varrho C^2\sum_{j=0}^{k-1}\|\Delta_hw_j\|^2\Big)\Big]&\leq\exp\big(4p\varrho C^2T\ell\big)\label{eq:estiW2new}\\
\intertext{and for all $q\in[1,\infty)$, $N\in\bN$ it holds that}
\sup_{k\in\{1,\dots, N\}}\sup_{i\in\{0,\dots, k-1\}}\mathbb E\Big [\operatorname{exp}\Big ( q\sum_{j=i}^{k-1}\alpha_{j}(X_j^N)\Big)\Big]&\leq \exp\Big(\frac{q^2C^2T}2\Big).\label{eq:estialphanew}
\end{align}
Recall that $\ell\in\bN$ is the dimension of the driving Brownian motion.
Combining \eqref{eq:moment_bound_3}, \eqref{eq:estiW2new} and \eqref{eq:estialphanew}, once with $q=4p\varrho$ and once with $q=2p\varrho $, 
yields that there exists a constant $K_{p\varrho }\in[1,\infty)$, depending also on $T,\ell$ and $\|X_{0}\|_{L_{4p\varrho }}$, such that for all $N\in \bN$ with $N\geq N_{p\varrho }$ and all $k\in\{1,\dots, N\}$
\begin{align*}
\|X_{k}^N\|_{L_{p\varrho }}&\leq \exp(2C^2T\ell)\Big(\|X_{0}\|_{L_{4p\varrho }}\exp(2p\varrho C^2T)+2Ck\exp(p\varrho C^2T)\Big)\leq K_{p\varrho }k.
\end{align*}
As a immediate consequence we have 
$(\mathbb E\|X_{k}^N\|^{p\varrho })^{1/\varrho}\leq K_{p\varrho }^pk^p$ and therefore
\begin{equation}\label{eq:moment_bound_4}
\limsup_{N\to\infty} \sup_{k\in\{0,\dots,N\}} \Bigl(N^{-p}(\mathbb E\|X_{k}^N\|^{p\varrho })^{1/\varrho}\Bigr) \leq K_{p\varrho }^p<\infty.
\end{equation}
The assertion of Lemma~\ref{cor:strongrate} now follows from \eqref{eq:moment_bound_4} in combination with fact that $(1-\alpha)(1-1/\varrho)\leq(1-\alpha)\cdot 1/2\leq -2p\cdot 1/2 =-p$. 
The latter is a direct consequence of the assumption that $\varrho\in[2,\infty)$, $\alpha\in[2p+1,\infty)$.

It is left to verify the estimates \eqref{eq:estiW2new} and \eqref{eq:estialphanew}. Concerning \eqref{eq:estialphanew}, note that for all $q\in[1,\infty)$, $j\in\{0,\dots,N-1\}$ and $x\in\mathbb R^{2n}$ the random variable $\alpha_j(x)$ is normally distributed with mean zero, so that $\exp(q\alpha_j(x))$ is log-normally distributed with mean $\exp\big(\frac12\mathbb E|q\alpha_{j}(x)|^2\big)$.
By H\"older's inequality and \eqref{eq:moment_bound_2} we have
\begin{align*}
\mathbb E|q\alpha_{j}(x)|^2&=\frac{q^2}{(\|x\|+2C)^4}\mathbb E\big|\big\langle x, \big(\diff(x)+\vartheta_2(x,h)\big) \Delta_hw_j\big\rangle\big|^2\\
&\leq \frac{q^2}{(\|x\|+2C)^4}\|x\|^2\big\|\diff(x)+\vartheta_2(x,h)\big\|^2 h\\
&\leq \frac{q^2}{(\|x\|+2C)^4}\|x\|^2C^2(1+\|x\|)^2 h\\
&\leq q^2C^2h.
\end{align*}
Since $X_j^N$ is $\mathcal F_{jh}$-measurable and the Brownian increment $\Delta_hw_j$ is independent of $\mathcal F_{jh}$, it follows that
\begin{align*}
\mathbb E\big(\operatorname{exp}(q\alpha_{j}(X_j^N))\big|\mathcal F_{jh}\big)=\Big[\bE\big(\exp(q\alpha_j(x))\big)\Big]_{x=X_j^N}\leq \exp\Big(\frac{q^2C^2h}2\Big)
\end{align*}
and thus, taking also into account that $\alpha_j(X_j^N)$ is $\cF_{(k-1)h}$-measurable for all $j\in\{0,\ldots,k-2\}$,
\begin{align*}
\mathbb E\Big [\operatorname{exp}\Big (q \sum_{j=i}^{k-1}\alpha_{j}(X_j^N)\Big)\Big]
&= \mathbb E\Big [\operatorname{exp}\Big ( q\sum_{j=i}^{k-2}\alpha_{j}(X_j^N)\Big)\mathbb E\Big (\!\operatorname{exp}\big (q\alpha_{k-1}(X_{k-1}^N)\big)\Big|\mathcal F_{(k-1)h}\Big)\Big]\\
&\leq \mathbb E\Big [\operatorname{exp}\Big ( q\sum_{j=i}^{k-2}\alpha_{j}(X_j^N)\Big)\exp\Big(\frac{q^2C^2h}2\Big)\Big]
\end{align*}
Applying this estimate recursively yields \eqref{eq:estialphanew}.

Concerning the estimate \eqref{eq:estiW2new}, we use the independence of $(\Delta_hw_j)_{j\in\{0,\dots,N-1\}}$ and the scaling property of Brownian motion  to obtain 
\begin{align}\label{eq:estiW2tmp}
\begin{split}
\mathbb E\Big(\exp\Big(2p\varrho C^2\sum_{j=0}^{k-1}\|\Delta_hw_j\|^2\Big)\Big)&=\prod_{j=0}^{k-1}\mathbb E\big(\exp(2p\varrho C^2\|\Delta_hw_j\|^2)\big)\\
&=\Big(\mathbb E\big(\exp(2p\varrho C^2h\|\Delta_1w_0\|^2)\big)\Big)^{k}.
\end{split}
\end{align}
An elementary calculation shows that 
$\mathbb E\big(\exp(q\|\Delta_1w_0\|^2)\big)\leq\exp(2q\ell)$ for all $q\in[0,1/4]$, cf.~\cite[Lemma 3.2]{hutzenthaler:p:2012}.
Thus, choosing $N_{p\varrho }\in\bN$ large enough such that $2p\varrho C^2h=2p\varrho C^2T/N\leq 1/4$ for all $N\geq N_{p\varrho }$ we obtain the estimate 
\begin{equation}\label{eq:moment_bound_5}
\mathbb E\big(\exp(2p\varrho C^2h\|\Delta_1w_0\|^2)\big)\leq\exp(4p\varrho C^2h\ell).
\end{equation}
Finally, \eqref{eq:estiW2tmp} and \eqref{eq:moment_bound_5} imply \eqref{eq:estiW2new}, which finishes the proof of Lemma~\ref{cor:strongrate}.
\end{proof}

Altogether the previous results lead to the following strong convergence result.

\begin{corollary}\label{cor:uniform_conv_1}
		Let $(X(t))_{t\geq0}$ be the strong solution to the SDE~\eqref{eq:SDEstrongsetting} with initial condition $X_0=(r_0,v_0)^\top$. 
		For $N\in\bN$ let $(X^N	_k)_{k\in\{0,\ldots,N\}}$ be the solution to the half-explicit drift-truncated Euler scheme in the form \eqref{eq:appstrongdef}, with increment function $\phi=\phi_\cM$ given by \eqref{eq:algtype}, \eqref{eq:schemeinterpret}, and let $(\tilde X^N(t))_{t\in[0,T]}$ be defined by piecewise constant or piecewise linear interpolation of $(X^N_k)_{k\in\{0,\ldots,N\}}$. Then $\limsup_{N\to\infty}\sup_{t\in[0,T]}\mathbb E(\|\tilde X^N(t)\|^p)<\infty$ and
		\begin{align*}
			\lim_{N\to\infty}\sup_{t\in[0,T]} \mathbb E\big(\|X(t)-\tilde X^N(t)\|^{p}\big)=0
		\end{align*}
for all $p\in[1,\infty)$.	
\end{corollary}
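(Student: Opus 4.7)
The plan is to assemble Lemmas \ref{lem:varifyconsistent}, \ref{lem:verifystable}, and \ref{cor:strongrate} and feed them into Proposition \ref{cor:mainjentzen}. Those three lemmas provide, in this order, the $(\drift,\diff)$-consistency of $\phi_\cM$, the $\alpha$-semi-$(1+\|\cdot\|_*^\varrho)$-stability of the map $\Phi_\cM\colon(x,h,w)\mapsto x+\phi_\cM(x,h,w)$, and the moment growth bound \eqref{eq:momentcor314}, which are exactly the three hypotheses of Proposition \ref{cor:mainjentzen}. Given that, the corollary becomes a matter of choosing parameters compatibly and then passing from piecewise linear to piecewise constant interpolation.

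Fix the target exponent $p_0\in[1,\infty)$ from the corollary. To obtain $L_{p_0}$-convergence I would apply Proposition \ref{cor:mainjentzen} with some $p\in(p_0,\infty)$, since \eqref{eq:strongconvcor314} yields convergence in $L_q$ for every $q<p$. The crux is the parameter matching: Lemma \ref{cor:strongrate} requires $\alpha\geq 2p+1$, while Lemma \ref{lem:verifystable} only delivers stability in the range $\alpha\leq \varrho/(2p_a+1)$. These intervals overlap precisely when $\varrho\geq (2p+1)(2p_a+1)$. Choosing $\varrho:=\max(3,(2p+1)(2p_a+1))$ and any $\alpha\in[2p+1,\varrho/(2p_a+1)]$ satisfies all three lemmas as well as the additional constraint $\alpha>1$ imposed by Proposition \ref{cor:mainjentzen}. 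This yields, for the piecewise linearly interpolated process $\tilde X^N_{\mathrm{lin}}$,
\begin{align*}
\limsup_{N\to\infty}\sup_{t\in[0,T]}\bE\big(\|\tilde X^N_{\mathrm{lin}}(t)\|^{p_0}\big)<\infty,\qquad \lim_{N\to\infty}\sup_{t\in[0,T]}\bE\big(\|X(t)-\tilde X^N_{\mathrm{lin}}(t)\|^{p_0}\big)=0.
\end{align*}

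To cover the piecewise constant interpolation $\tilde X^N_{\mathrm{const}}$ I would use that $\tilde X^N_{\mathrm{const}}(t)=X_k^N=\tilde X^N_{\mathrm{lin}}(kh)$ for $t\in[kh,(k+1)h]$, so that the error splits as
\begin{align*}
\bE\big(\|X(t)-\tilde X^N_{\mathrm{const}}(t)\|^{p_0}\big)\leq 2^{p_0-1}\Big(\bE\big(\|X(t)-X(kh)\|^{p_0}\big)+\bE\big(\|X(kh)-\tilde X^N_{\mathrm{lin}}(kh)\|^{p_0}\big)\Big).
\end{align*}
The second term vanishes uniformly in $t$ by the previous step, and the first vanishes uniformly as $h=T/N\to 0$ because $X$ has finite moments of arbitrary order on $[0,T]$ by Theorem \ref{theo:solutionandinherent} and is continuous in $L_{p_0}$ uniformly on $[0,T]$ (a standard consequence of the SDE representation \eqref{eq:SDEstrongsetting} combined with the polynomial growth bounds \eqref{eq:assbabB} on $\drift,\diff$).

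The only nontrivial obstacle I foresee is the parameter juggling in the second paragraph: each of the three preparatory lemmas is by itself routine to invoke, but one must keep track of how the drift growth exponent $p_a$ from Assumption \ref{ass:aB} forces $\varrho$, and hence the Lyapunov function $1+\|\cdot\|_*^\varrho$, to be chosen increasingly large with $p_0$. Once this bookkeeping is carried out, the statement falls out directly, with only the elementary inequality $\|x+y\|^{p_0}\leq 2^{p_0-1}(\|x\|^{p_0}+\|y\|^{p_0})$ and basic continuity of $X$ needed to absorb the interpolation subtlety.
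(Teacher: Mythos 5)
Your proposal is correct and follows essentially the same route as the paper: combine Lemmas~\ref{lem:varifyconsistent}, \ref{lem:verifystable} and \ref{cor:strongrate}, feed them into Proposition~\ref{cor:mainjentzen} with $\varrho$ chosen large enough (of order $(2p+1)(2p_a+1)$) so that the stability range $\alpha\leq\varrho/(2p_a+1)$ overlaps the moment-growth range $\alpha\geq 2p+1$, exploit that $p$ is arbitrary to upgrade the $q<p$ conclusion, and reduce the piecewise constant case to the piecewise linear one via the $L_p$-continuity of $t\mapsto X(t)$ on $[0,T]$. The paper justifies that last continuity by dominated convergence from path continuity and the moment bound of Theorem~\ref{theo:solutionandinherent} rather than from the SDE representation, but this is an immaterial difference.
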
\label{cor:conv_position_velocity}
\begin{proof}
	We first treat the case where $(\tilde X^N(t))_{t\in[0,T]}$ is defined by piecewise linear interpolation.
	Let $p\in[1,\infty)$, $\alpha\in[2p+1,\infty)$ and $\varrho\in[\alpha(2p_a+1),\infty)$. Thanks to the consistency result in Lemma~\ref{lem:varifyconsistent}, the semi-stability result in Lemma~\ref{lem:verifystable} and the moment growth property from Lemma~\ref{cor:strongrate}, we are able to apply Proposition~\ref{cor:mainjentzen}. We obtain that
	$\limsup_{N\to\infty}\sup_{t\in[0,T]}\mathbb E(\|\tilde X^N(t)\|^p)<\infty$ and, for all $q\in(0,p)$,
	$
	\lim_{N\to\infty}\sup_{t\in[0,T]} \mathbb E\big(\|X(t)-\tilde X^N(t)\|^{q}\big)=0.
	$
	Since $p$ has been chosen arbitrarily this yields the assertion.
	Now let $(\tilde X^N(t))_{t\in[0,T]}$ be defined by piecewise constant interpolation. 
	In this case the assertion follows from he first part of this proof and the continuity of the mapping $[0,T]\ni t\mapsto X(t)\in L_p(\Omega;\bR^{2n})$ for all $p\in[1,\infty)$.
	The latter is a consequence of the dominated convergence theorem, using the sample path continuity of $(X(t))_{t\in[0,T]}$ and the moment bound $\sup_{t\in[0,T]}\bE(\|X(t)\|^p)=\sup_{t\in[0,T]}\bE(\|(r(t),v(t))\|^p)<\infty$ following from Theorem~\ref{theo:solutionandinherent}.
\end{proof}

\subsection{Pathwise uniform convergence}\label{subsec:convergence_mu}

Here we refine the convergence result from Corollary~\ref{cor:uniform_conv_1} and show pathwise uniform convergence of $\tilde X^N=(\tilde r^N,\tilde v^N)^\top$ and $\tilde \mu^N$. The assertion of Theorem~\ref{theo:mainend} is a direct consequence of Propositions~\ref{prop:uniform_conv_1}, \ref{prop:uniform_conv_2} and Lemma~\ref{lem:uniform_conv_2} below.
As before, we always suppose that the assumptions in Section~\ref{subsec:assumptions} are fulfilled and consider the setting described in Section~\ref{subsec:SDEsetting}. 

\begin{proposition}\label{prop:uniform_conv_1}
		Let $(X(t))_{t\geq0}$ be the strong solution to the SDE~\eqref{eq:SDEstrongsetting} with initial condition $X_0=(r_0,v_0)^\top$. 
		For $N\in\bN$ set $h:=T/N$ and let $(X^N	_k)_{k\in\{0,\ldots,N\}}$ be the solution to the half-explicit drift-truncated Euler scheme in the form \eqref{eq:appstrongdef}, with increment function $\phi=\phi_\cM$ given by \eqref{eq:algtype}, \eqref{eq:schemeinterpret}. Then, for all $p\in[1,\infty)$ it holds that
	\begin{align*}
	\lim_{N\to\infty}\mathbb E\Big(\sup_{k\in\{0,\ldots,N\}}\big\|X(kh)-X^N_k\big\|^{p}\Big)=0.
	\end{align*}
\end{proposition}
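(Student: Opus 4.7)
The plan is to bootstrap the non-pathwise convergence from Corollary~\ref{cor:uniform_conv_1} to pathwise-uniform convergence over the grid $\{kh\}_{k=0}^N$. Since $\drift$ and $\diff$ are only locally Lipschitz, I would use a localization by stopping times in order to invoke local Lipschitz estimates on a bounded set, and control the complementary tail event via uniform-in-$N$ moment bounds. The error will be decomposed into a Bochner-type drift contribution, a martingale diffusion contribution, and a perturbation collecting the Lagrange-multiplier terms $\hat\kappa$, $\lambdarem$, $\Lambdarem$, after which BDG together with a discrete Gronwall argument close the estimate on the good event.

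Concretely, set $e_k^N := X(kh) - X^N_k$ and $\bar X^N(s) := X^N_{\lfloor s/h\rfloor}$ for $s\in[0,T]$. Combining the SDE \eqref{eq:SDEstrongsetting} with the reformulated scheme \eqref{eq:appstrongdef}--\eqref{eq:schemeinterpret} yields the decomposition
\[
e_k^N
= \int_0^{kh}\!\big[\drift(X(s))-\eta(\bar X^N(s),h)\drift(\bar X^N(s))\big]\,\dl s
+ \int_0^{kh}\!\big[\diff(X(s))-\diff(\bar X^N(s))\big]\,\dl w(s)
- P_k^N,
\]
where $P_k^N := \sum_{j=0}^{k-1}[\vartheta_1(X_j^N,h) + \vartheta_2(X_j^N,h)\Delta_h w_j]$ collects the Lagrange-multiplier perturbations; by the estimates \eqref{eq:estLambdarem} in Theorem~\ref{theo:mainonestep}, $\vartheta_1$ is of order $O(h^2)$ and $\vartheta_2$ of order $O(h)$, each with polynomial growth in $\|X_j^N\|$. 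Fix $R\in(0,\infty)$ and set $\tau_R := \inf\{t\in[0,T] : \|X(t)\| \geq R\}\wedge T$, $\sigma_R^N := \inf\{k : \|X^N_k\| \geq R\}\wedge N$, $\Omega_R^N := \{\tau_R = T,\ \sigma_R^N = N\}$.

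Splitting $\bE\sup_k\|e_k^N\|^p$ according to $\Omega_R^N$ and its complement, the tail term is bounded via Hölder by $\big(\bE\sup_k\|e_k^N\|^{2p}\big)^{1/2}\bP((\Omega_R^N)^c)^{1/2}$; Markov's inequality together with uniform-in-$N$ moment bounds on $X$ (from Theorem~\ref{theo:solutionandinherent}) and on $\sup_k\|X^N_k\|$ then drives this term to zero as $R\to\infty$, uniformly in $N$. On $\Omega_R^N$, both processes stay in $B_R(0)$, so $\drift$, $\diff$ are $C_R$-Lipschitz there and $\eta(X_k^N,h)=1$ once $h$ is smaller than a threshold $h_R$. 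Invoking BDG on the stochastic integral, Jensen on the Bochner integral, and the estimates \eqref{eq:estLambdarem} on $P_k^N$ (so $\sum_j\|\vartheta_1(X_j^N,h)\|=O(h)$ uniformly on $\Omega_R^N$, and BDG on $\sum_j\vartheta_2(X_j^N,h)\Delta_h w_j$ yields an $L_p$-bound of order $(Nh^3)^{p/2}=O(h^p)$ on its supremum), I arrive at a recursive estimate
\[
\bE\big[\sup_{k'\le k}\|e_{k'}^N\|^p\mathbbm{1}_{\Omega_R^N}\big]
\le C_{p,R}\,h\sum_{j=0}^{k-1}\bE\big[\sup_{k'\le j}\|e_{k'}^N\|^p\mathbbm{1}_{\Omega_R^N}\big]
+ r_{N,R},
\]
with $r_{N,R}$ collecting the perturbation contributions together with $\bE\int_0^T\|X(s)-X(\lfloor s/h\rfloor h)\|^p\,\dl s$; the latter vanishes as $N\to\infty$ by sample-path continuity of $X$, the moment bounds from Theorem~\ref{theo:solutionandinherent}, and dominated convergence. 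Discrete Gronwall then gives $\bE[\sup_k\|e_k^N\|^p\mathbbm{1}_{\Omega_R^N}]\to 0$ for each fixed $R$. For any $\varepsilon>0$, choosing $R$ large enough that the tail contribution is below $\varepsilon/2$ uniformly in $N$ and then $N$ large enough that the good-event contribution is below $\varepsilon/2$ completes the proof for every $p\in[1,\infty)$.

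The principal obstacle I foresee is the uniform-in-$N$ moment bound on $\sup_k\|X^N_k\|$ required in the Hölder step, since Lemma~\ref{cor:strongrate} controls only $\sup_k\bE\|X^N_k\|^q$. I would refine the exponential recursion \eqref{eq:tmpsplitcorveri}: the factor $\exp\big(C^2\sum_{j<k}\|\Delta_h w_j\|^2\big)$ is non-decreasing in $k$, so its supremum equals its value at $k=N$ and has uniform exponential moments via the estimate leading to \eqref{eq:moment_bound_5}; the factor $\exp\big(\sum_{j<k}\alpha_j(X_j^N)\big)$ is a positive submartingale with uniformly bounded terminal moments thanks to the conditional-Gaussian control underlying \eqref{eq:estialphanew}, so Doob's maximal inequality applies. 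Assembling these yields $\big\|\sup_k\|X^N_k\|\big\|_{L_q}\leq C_q$ uniformly in $N$ and closes the argument.
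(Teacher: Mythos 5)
Your overall strategy (localization, local Lipschitz estimates on a ball, BDG plus discrete Gronwall on the good event, H\"older on its complement) is a legitimate route in principle, but it has a genuine gap at exactly the point you flag, and your proposed repair does not close it. The H\"older step on $(\Omega_R^N)^c$ needs $\limsup_{N\to\infty}\bE\big(\sup_{k\in\{0,\ldots,N\}}\|X^N_k\|^{2p}\big)<\infty$, a \emph{pathwise} sup-moment bound for the scheme, uniform in $N$. Refining \eqref{eq:tmpsplitcorveri} cannot deliver this: the right-hand side of that recursion contains the additive term $2C\sum_{i=0}^{k-1}\prod_{j=i}^{k-1}\exp\big(C^2\|\Delta_hw_j\|^2+\alpha_j(X^N_j)\big)$, and since each factor satisfies $\bE[\exp(C^2\|\Delta_hw_j\|^2+\alpha_j(X^N_j))\,|\,\cF_{jh}]\geq 1$ (the first exponent is nonnegative and $\exp(\alpha_j(x))$ has conditional mean $\geq1$), the expectation of this term is already at least $2Ck$. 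No application of Doob's inequality to the exponential factors can remove the factor $N$ coming from the sum over $i$; the $O(1)$-per-step additive constant $2C$ in $\|X^N_{k+1}\|\leq(\|X^N_k\|+2C)\exp(\cdots)$ does not scale with $h$, so linear growth in $k$ is intrinsic to this recursion. (A uniform pathwise bound is in fact obtainable, but by a different argument: bound $\sup_k\|X^N_k\|$ by $\|X_0\|+\sum_j\|\eta_0(\cdot)\|+\sum_j\|\drift(X^N_j)\|h+\sup_k\|\sum_{j<k}\diff(X^N_j)\Delta_hw_j\|$ using $\eta\leq1$, then apply Jensen to the drift sum, BDG to the martingale, and the uniform fixed-time moment bounds from Corollary~\ref{cor:uniform_conv_1}.) A secondary technical issue is that $\Omega_R^N$ is not adapted, so the indicator cannot be carried inside the Gronwall recursion and the BDG application as written; one must work with stopped processes.

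For comparison, the paper's proof avoids all of this. It uses the same three-term decomposition of $X(kh)-X^N_k$, but then applies Minkowski's integral inequality and the Burkholder--Davis--Gundy inequality to pull the supremum over $k$ \emph{inside}, reducing the claim to the pointwise-in-$s$ statements $\|\drift(X(s))-\eta(X^N_{k_s},h)\drift(X^N_{k_s})\|_{L_p}\to0$ and $\|\diff(X(s))-\diff(X^N_{k_s})\|_{L_p}\to0$ with an $s$-uniform dominating bound. These follow from the convergence $X^N_{k_s}\to X(s)$ in probability already established in Corollary~\ref{cor:uniform_conv_1}, the continuous mapping theorem, Vitali's theorem (uniform integrability via polynomial growth of $\drift$, $\diff$ and the fixed-time moment bounds), and dominated convergence in $s$. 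No Gronwall argument, no localization, and no pathwise moment bounds for the scheme are required.
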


\begin{proof}
To simplify the exposition we introduce the notation $k_s:=k^N_s:=\lfloor s/h\rfloor
=\lfloor s/(T/N)\rfloor$ for $s\in[0,T]$, $N\in\bN$. Using also the notation from Section~\ref{subsec:SDEsetting},
we have
\begin{equation}\label{eq:uniform_conv_1}
\begin{aligned}
X^N_k&=X_0+\sum_{j=0}^{k-1}\Big(\eta_0\big(X^N_j,h,\Delta_hw_j\big)+\eta\big(X^N_j,h\big)\drift(X^N_j)h+\diff(X^N_j)\Delta_hw_j\Big),
\end{aligned}
\end{equation}
where $\eta_0\colon\bR^{2n}\times[0,T]\times\bR^\ell\to\bR^{2n}$ is defined by \eqref{eq:schemeinterpret} and $\eta\colon\bR^{2n}\cong\bR^n\times\bR^n\to(0,1]$ is the truncation function defined by \eqref{eq:defeta}. As a consequence,
\begin{align*}
X(kh)-X^N_k
&=
-\sum_{j=0}^{k-1}\eta_0\big(X^N_j,h,\Delta_hw_j\big)
+
\int_0^{kh}\big(\drift(X(s))-\eta(X_{k_s}^N,h)\drift(X^N_{k_s})\big)\dl s\\
&\quad+
\int_0^{kh}\big(\diff(X(s))-\diff(X^N_{k_s})\big)\dl w(s).
\end{align*}
Without loss of generality we assume that $p\geq 2$. Minkowski's integral inequality and the Burkholder-Davis-Gundy inequality in the form \cite[Lemma~7.7]{daprato:b:2014} applied to \eqref{eq:uniform_conv_1} yield that there exists a constant $C_p\in(0,\infty)$ that does not depend on $N$ such that
\begin{equation}\label{eq:uniform_conv_2}
\begin{aligned}
\Big\|\sup_{k\in\{0,\ldots,N\}}\big\|X(kh)-X^N_k\big\|\Big\|_{L_p}
&\leq
\sum_{j=0}^{N-1}\big\|\eta_0\big(X^N_j,h,\Delta_hw_j\big)\big\|_{L_p}\\
&\quad+
\int_0^T\big\|\drift(X(s))-\eta(X_{k_s}^N,h)\drift(X^N_{k_s})\big\|_{L_p}\dl s\\
&\quad+
C_p \Big(\int_0^T\big\|\diff(X(s))-\diff(X^N_{k_s})\big\|_{L_p}^2\dl s\Big)^{1/2}\\
&=:I^N+I\!I^N+I\!I\!I^N.
\end{aligned}
\end{equation}
In view of the definition \eqref{eq:schemeinterpret} of $\eta_0$ we can use Lemma~\ref{lem:uniform_conv_1} below and Assumption~\ref{ass:g} to obtain that $\lim_{N\to\infty}I^N=0$ in \eqref{eq:uniform_conv_2}. 
Concerning the term $I\!I^N$ in \eqref{eq:uniform_conv_2} first note that
\begin{align}\label{eq:uniform_conv_3}
\eta(X^N_{k_s},h)\drift(X^N_{k_s})\xrightarrow[N\to\infty]{\bP}\drift(X(s))\;\text{ for all }s\in[0,T].
\end{align} 
Indeed, since $(X^N_{k_s},h)\xrightarrow{\bP}(X(s),0)$ for all $s\in[0,T]$ due to Corollary~\ref{cor:conv_position_velocity}, the continuous mapping theorem applied to the function $\bR^{2n}\times[0,\infty)\ni(x,h)\mapsto \eta(x,h)\drift(x)\in\bR^{2n}$ and the fact that $\eta(x,0)=1$ for all $x\in\bR^{2n}$ imply the convergence in probability \eqref{eq:uniform_conv_3}.
Moreover, the polynomial growth of $\drift$ stated in \eqref{eq:assbabB} and the moment bounds $\limsup_{N\to\infty}\sup_{k\in\{0,\ldots,N\}}\bE(\|X^N_k\|^q)<\infty$, $\sup_{s\in[0,T]}\bE(\|X(s)\|^q)<\infty$, $q\in[1,\infty)$, following from Corollary~\ref{cor:conv_position_velocity} and Theorem~\ref{theo:solutionandinherent} imply that for all $p\in[1,\infty)$ there exists $N_0\in\bN$ such that the family of random variables
\begin{equation}\label{eq:uniform_conv_4}
\begin{aligned}
\big\{\|\eta(X^N_k,h)\drift(X^N_k)\|^p:N\geq N_0,\,k\in\{0,\ldots,N\}\big\}\cup\big\{\|\drift(X(s))\|^p:s\in[0,T]\big\}
\end{aligned}
\end{equation}
is uniformly integrable. Thus, Vitali's convergence theorem and \eqref{eq:uniform_conv_3}, \eqref{eq:uniform_conv_4} yield that $
\eta(X^N_{k_s},h)\drift(X^N_{k_s})\xrightarrow{L_p}\drift(X(s))$ for all $s\in[0,T]$, $p\in[1,\infty)$. This and the fact that $\limsup_{N\to\infty}\sup_{s\in[0,T]}\|\drift(X(s))-\eta(X_{k_s}^N,h)\drift(X^N_{k_s})\|_{L_p}<\infty$ allows us to apply the dominated convergence theorem in order to deduce that $\lim_{N\to\infty}I\!I^N=0$.
One can finally use completely analogous arguments as for the term $I\!I^N$ in \eqref{eq:uniform_conv_2} to show that also $\lim_{N\to\infty}I\!I\!I^N=0$. 
\end{proof}

\begin{lemma}\label{lem:uniform_conv_1}
Consider the setting described in Theorem~\ref{theo:mainend} and let the mappings
$\hat\kappa\colon\cD\to\bR^m$, $\lambdarem\colon\cD\to\bR^m$, $\Lambdarem\colon\cD\to\bR^{m\times\ell}$  with $T\cM\times(0,\infty)\subset\cD\subset\bR^{2n}\times(0,\infty)$ 
be given by Theorem~\ref{theo:mainonestep}. Then, for all $p\in[1,\infty)$ there exist constants $C\in(0,\infty)$, $N_0\in\bN$ such that
\begin{equation}\label{eq:uniform_conv_5}
\begin{aligned}
\sum_{j=0}^{N-1}\Big(\big\|\hat\kappa\big(r^N_j,v^N_j,h\big)\big\|_{L_p}+\big\|\lambdarem\big(r^N_j,v^N_j,h\big)\big\|_{L_p}\Big)&\leq Ch,\\
\sum_{j=0}^{N-1}\big\|\Lambdarem\big(r^N_j,v^N_j,h\big)\Delta_hw_{j}\big\|_{L_p}&\leq C\sqrt{h},
\end{aligned}
\end{equation}
for all $N\in\bN$ with $N\geq N_0$.
\end{lemma}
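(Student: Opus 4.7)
The plan is to combine the pointwise norm estimates \eqref{eq:estLambdarem} from Theorem~\ref{theo:mainonestep} --- applicable since $(r_j^N,v_j^N)\in T\cM\subset\cD$ --- with the uniform $L_p$-moment bounds on the discrete solution, namely that for every $q\in[1,\infty)$ there exists $N_0(q)\in\bN$ and a finite $M_q$ with $\sup_{N\geq N_0(q)}\sup_{k\in\{0,\ldots,N\}}\bE\|(r_k^N,v_k^N)\|^q\leq M_q$. These bounds follow from Corollary~\ref{cor:uniform_conv_1} upon identifying $X_k^N=\tilde X^N(kh)$ in the piecewise constant interpolation convention. Since each sum in \eqref{eq:uniform_conv_5} has $N$ summands and $Nh=T$, it suffices to show that the individual summands have $L_p$-norms of order $h^2$, respectively $h^{3/2}$, uniformly in $j\in\{0,\ldots,N-1\}$.

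For the first sum, Minkowski's inequality applied to the first two lines of \eqref{eq:estLambdarem} gives, for all $N$ large enough,
\begin{align*}
\bigl\|\hat\kappa(r_j^N,v_j^N,h)\bigr\|_{L_p}&\leq Ch^2\bigl(1+\|(r_j^N,v_j^N)\|_{L_{2p}}^2\bigr)\leq Ch^2,\\
\bigl\|\lambdarem(r_j^N,v_j^N,h)\bigr\|_{L_p}&\leq Ch^2\bigl(1+\|(r_j^N,v_j^N)\|_{L_{p\cdot p_\lambda}}^{p_\lambda}\bigr)\leq Ch^2,
\end{align*}
where the second inequality in each line uses $M_{2p}$ and $M_{p\cdot p_\lambda}$, respectively. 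Summing over $j$ yields $\sum_{j=0}^{N-1}(\cdots)\leq 2CNh^2=2CTh$, which gives the first estimate in \eqref{eq:uniform_conv_5}.

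For the third sum, the key observation is that $\Lambdarem(r_j^N,v_j^N,h)$ is $\cF_{jh}$-measurable, since the scheme is $(\cF_{kh})$-adapted by Corollary~\ref{cor:applyingonestep} and $\Lambdarem$ is a deterministic Borel function by Theorem~\ref{theo:mainonestep}, whereas $\Delta_hw_j$ is independent of $\cF_{jh}$. Combining the tower property with sub-multiplicativity of the operator norm and the elementary estimate $\bE\|\Delta_hw_j\|^p\leq C_p h^{p/2}$ yields
\begin{align*}
\bE\bigl\|\Lambdarem(r_j^N,v_j^N,h)\Delta_hw_j\bigr\|^p\leq C_p h^{p/2}\,\bE\bigl\|\Lambdarem(r_j^N,v_j^N,h)\bigr\|^p.
\end{align*}
By the third inequality in \eqref{eq:estLambdarem} combined with the moment bound $M_{p\cdot p_\Lambda}$, one has $\|\Lambdarem(r_j^N,v_j^N,h)\|_{L_p}\leq Ch$, so each summand is of order $h^{3/2}$ in $L_p$. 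Summing $N$ such terms gives $Nh^{3/2}=T\sqrt h$, which establishes the second estimate.

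No serious obstacle is expected: the proof is a routine combination of the pointwise estimates from Theorem~\ref{theo:mainonestep} with the uniform moment bounds of Corollary~\ref{cor:uniform_conv_1}. The only point requiring attention is the exploitation of the $\cF_{jh}$-measurability of $\Lambdarem$ via conditioning, so as to extract exactly one factor $\sqrt h$ from $\Delta_hw_j$ and no more; this is essential for obtaining the correct $\sqrt h$-rate after summation.
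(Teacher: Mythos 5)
Your proposal is correct and follows essentially the same route as the paper: combine the pointwise Lagrange multiplier bounds \eqref{eq:estLambdarem} (applicable since $(r^N_j,v^N_j)\in T\cM\subset\cD$) with the uniform discrete moment bounds from Corollary~\ref{cor:conv_position_velocity}, and sum the resulting $O(h^2)$ and $O(h^{3/2})$ terms. The only cosmetic difference is in the stochastic term, where the paper simply applies H\"older's inequality, $\|\Lambdarem\,\Delta_hw_j\|_{L_p}\leq\|\Lambdarem\|_{L_{2p}}\|\Delta_hw_j\|_{L_{2p}}$, without invoking the $\cF_{jh}$-measurability and independence that you use via conditioning; both yield the same $h^{3/2}$ rate per summand.
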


\begin{proof}
The assertion is a direct consequence of the Lagrange multiplier estimates \eqref{eq:estLambdarem} and the moment bound $\limsup_{N\to\infty}\sup_{k\in\{0,\ldots,N\}}\bE(\|(r^N_k,v^N_k)\|^q)<\infty$, $q\in[1,\infty)$, following from Corollary~\ref{cor:conv_position_velocity}. For instance, we have
\begin{align*}
\sum_{j=0}^{N-1}\big\|\Lambdarem\big(r^N_j,v^N_j,h\big)\Delta_hw_{j}\big\|_{L_p}
&\leq
\sum_{j=0}^{N-1}\big\|\Lambdarem\big(r^N_j,v^N_j,h\big)\big\|_{L_{2p}}\|\Delta_hw_{j}\|_{L_{2p}}\\
&\leq
\sum_{j=0}^{N-1}h\big\|1+\|(r^N_j,v^N_j)\|^{p_\Lambda}\big\|_{L_{2p}}\sqrt{h}\|\Delta_1w_0\|_{L_{2p}}\\
&\leq 
C_p\sum_{j=0}^{N-1}h^{3/2}=C_pT\sqrt{h}
\end{align*}
for all $N\geq N_0$, where $C_p\in(0,\infty)$ does not depend on $N$ and where $N_0\in\bN$ is chosen such that $\sup_{N \geq N_0}\big\|1+\|(r^N_j,v^N_j)\|^{p_\Lambda}\big\|_{L_{2p}}<\infty$. The first estimate in \eqref{eq:uniform_conv_5} can be shown analogously.
\end{proof}

\begin{proposition}\label{prop:uniform_conv_2}
Consider the setting described in Theorem~\ref{theo:mainend}. Then, for all $p\in[1,\infty)$ it holds that
	\begin{align*}
	\lim_{N\to\infty}\mathbb E\Big(\sup_{k\in\{0,\ldots,N\}}\big\|\mu(kh)-\mu^N_k\big\|^{p}\Big)=0.
	\end{align*}
\end{proposition}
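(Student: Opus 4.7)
The plan is to compare $\mu(kh)=-\int_0^{kh}F_1(r(s),v(s))\,\dl s-\int_0^{kh}F_2(r(s),v(s))\,\dl w(s)$ coming from \eqref{eq:defmu} with the partial sum $\mu^N_k=\sum_{j=0}^{k-1}\lambda^N_{j+1}$, abbreviating
\begin{equation*}
F_1(r,v):=G_M^{-1}(r)\bigl[Dg(r)M^{-1}a(r,v)+D^2g(r)(v,v)\bigr],\quad F_2(r,v):=G_M^{-1}(r)Dg(r)M^{-1}B(r,v).
\end{equation*}
Inserting the decomposition \eqref{eq:nonlineardecompose_stoch} of $\lambda^N_{j+1}$ from Corollary~\ref{cor:applyingonestep} yields the natural splitting $\mu(kh)-\mu^N_k=A^N_k+B^N_k+C^N_k$ with
\begin{align*}
A^N_k&=\sum_{j=0}^{k-1}\eta(r^N_j,v^N_j,h)F_1(r^N_j,v^N_j)\,h-\int_0^{kh}F_1(r(s),v(s))\,\dl s,\\
B^N_k&=\sum_{j=0}^{k-1}F_2(r^N_j,v^N_j)\Delta_hw_j-\int_0^{kh}F_2(r(s),v(s))\,\dl w(s),\\
C^N_k&=-\sum_{j=0}^{k-1}\bigl(\lambdare_{j+1}+\Lambdare_{j+1}\Delta_hw_j\bigr),
\end{align*}
so it suffices to prove that $\bigl\|\sup_k\|A^N_k\|\bigr\|_{L_p}$, $\bigl\|\sup_k\|B^N_k\|\bigr\|_{L_p}$ and $\bigl\|\sup_k\|C^N_k\|\bigr\|_{L_p}$ each vanish as $N\to\infty$.

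The term $C^N_k$ is immediate from Lemma~\ref{lem:uniform_conv_1}, which gives $\bigl\|\sup_k\|C^N_k\|\bigr\|_{L_p}\leq\sum_{j=0}^{N-1}\|\lambdare_{j+1}\|_{L_p}+\sum_{j=0}^{N-1}\|\Lambdare_{j+1}\Delta_hw_j\|_{L_p}=O(\sqrt{h})$. For $A^N_k$, setting $k_s:=\lfloor s/h\rfloor$ one telescopes
\begin{equation*}
\sup_{k\in\{0,\ldots,N\}}\|A^N_k\|\leq\int_0^T\bigl\|\eta(r^N_{k_s},v^N_{k_s},h)F_1(r^N_{k_s},v^N_{k_s})-F_1(r(s),v(s))\bigr\|\,\dl s,
\end{equation*}
so by Minkowski's integral inequality it suffices to drive the integrand to zero in $L_p$ pointwise in $s$. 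This follows by Vitali's convergence theorem exactly as in the proof of Proposition~\ref{prop:uniform_conv_1}: Corollary~\ref{cor:uniform_conv_1} and the continuous mapping theorem applied to $(x,h)\mapsto\eta(x,h)F_1(x)$ (noting $\eta(\cdot,0)\equiv 1$) give convergence in probability, while uniform integrability of any order comes from Assumption~\ref{ass:g} (boundedness of $G_M^{-1},Dg,D^2g$ on $\cM^\varepsilon$), the polynomial growth of $a$ from Assumption~\ref{ass:aB}, and the uniform moment bounds on $(r^N_k,v^N_k)$ and $(r(\cdot),v(\cdot))$ provided by Corollary~\ref{cor:uniform_conv_1} and Theorem~\ref{theo:solutionandinherent}.

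The stochastic-integral piece $B^N_k$ is the delicate one. The crucial rewriting is
\begin{equation*}
B^N_k=\int_0^{kh}\bigl[F_2(r^N_{k_s},v^N_{k_s})-F_2(r(s),v(s))\bigr]\,\dl w(s),
\end{equation*}
which is a genuine continuous $(\cF_t)$-martingale because the half-explicit nature of the scheme forces $F_2(r^N_j,v^N_j)$ to be $\cF_{jh}$-measurable, so the integrand is left-continuous and $(\cF_t)$-adapted. Assuming without loss of generality $p\geq 2$, the Burkholder-Davis-Gundy inequality in the form \cite[Lemma~7.7]{daprato:b:2014} combined with Minkowski's integral inequality yields
\begin{equation*}
\Bigl\|\sup_{k\in\{0,\ldots,N\}}\|B^N_k\|\Bigr\|_{L_p}\leq C_p\Bigl(\int_0^T\bigl\|F_2(r^N_{k_s},v^N_{k_s})-F_2(r(s),v(s))\bigr\|_{L_p}^2\,\dl s\Bigr)^{1/2},
\end{equation*}
and the same Vitali-type argument as for $A^N_k$ (now invoking the linear growth and local Lipschitz continuity of $B$ from Assumption~\ref{ass:aB}) forces the right-hand side to zero; the case $p\in[1,2)$ then follows from monotonicity of $L_p$-norms.

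The main technical obstacle is precisely the control of the supremum in $k$ for $B^N_k$: without the martingale representation above one would have to apply a discrete maximal inequality to a sum coupling $N$-dependent increments with the driving noise, which is much less clean. Thanks to the left-endpoint evaluation dictated by the half-explicit scheme, the difference between the continuous stochastic integral and its Riemann-type sum consolidates into a single continuous martingale and BDG applies directly. Beyond this the argument is pure bookkeeping, combining the uniform-integrability machinery of Proposition~\ref{prop:uniform_conv_1} with the quantitative Lagrange-multiplier estimates funneled through Lemma~\ref{lem:uniform_conv_1}.
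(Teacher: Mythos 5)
Your proof is correct and follows essentially the same route as the paper: the paper bundles your $A^N_k$ and $B^N_k$ into a continuous-time interpolant $\bar\mu^N(t)$, controls $\sup_t\|\mu(t)-\bar\mu^N(t)\|$ via Minkowski's integral inequality and Burkholder--Davis--Gundy together with the Vitali argument from Proposition~\ref{prop:uniform_conv_1}, and disposes of your $C^N_k$ through the identity $\bar\mu^N(kh)-\mu^N_k=-\sum_{j}(\lambdare_{j+1}+\Lambdare_{j+1}\Delta_hw_j)$ and Lemma~\ref{lem:uniform_conv_1}. Your observation that the adapted, piecewise-constant integrand turns the Riemann sum into a genuine continuous martingale is exactly the mechanism the paper exploits.
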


\begin{proof}
Let the mappings $f\colon\cM^\varepsilon\times\bR^n\to\bR^m$ and $F\colon \cM^\varepsilon\times\bR^n\to\bR^{m\times\ell}$ be defined by
\begin{align*}
f(x,y)&:=-G_M^{-1}(x)Dg(x)M^{-1}\big[a(x,y)+D^2g(x)(y,y)\big]\\
F(x,y)&:=-G_M^{-1}(x)Dg(x)M^{-1}B(x,y).
\end{align*}
Then the representation \eqref{eq:defmu} of $(\mu(t))_{t\in[0,T]}$
can be written as
\begin{equation}
\begin{aligned}
\mu(t)
&=
\int_0^tf(r(s),v(s))\,\dl s+\int_0^tF(r(s),v(s))\,\dl w(s)
\end{aligned}
\end{equation}
and, according to Theorem~\ref{theo:mainonestep} and Corollary~\ref{cor:applyingonestep}, the process $(\mu^N_k)_{k\in\{1,\ldots,N\}}$ satisfies
\begin{equation}
\begin{aligned}
\mu^N_{k}
&=
\sum_{j=0}^{k-1}\bigg\{\eta(r^N_{j},v^N_{j},h)\,f(r^N_j,v^N_j)\,h+F(r^N_j,v^N_j)\,\Delta_hw_{j}\\
&\quad+\lambdarem(r^N_j,v^N_j,h)+\Lambdarem(r^N_j,v^N_j,h)\,\Delta_hw_{j}\bigg\}.
\end{aligned}
\end{equation}
As in the proof of Proposition~\ref{prop:uniform_conv_1} we use the notation $k_s:=k^N_s:=\lfloor s/h\rfloor
=\lfloor s/(T/N)\rfloor$ for $s\in[0,T]$, $N\in\bN$. Consider the time-interpolated perturbation $(\bar\mu^N(t))_{t\in[0,T]}$  of $(\mu^N_k)_{k\in\{1,\ldots,N\}}$ defined by
\begin{equation}
\begin{aligned}
\bar\mu^N(t)
&:=
\int_0^t\eta(r^N_{k_s},v^N_{k_s},h)\,f(r^N_{k_s},v^N_{k_s})\,\dl s+\int_0^tF(r^N_{k_s},v^N_{k_s})\,\dl w(s).
\end{aligned}
\end{equation}
Next, we assume without loss of generality that $p\geq 2$ and apply Minkowski's integral inequality and the Burkholder-Davis-Gundy inequality in the form \cite[Lemma~7.7]{daprato:b:2014} to obtain
\begin{equation}\label{eq:uniform_conv_6}
\begin{aligned}
\Big\|\sup_{t\in[0,T]}\big\|\mu(t)-\bar\mu^N(t)\big\|\Big\|_{L_p}
&\leq
\int_0^T\big\|f(r(s),v(s))-\eta(r^N_{k_s},v^N_{k_s},h)f(r^N_{k_s},v^N_{k_s})\big\|_{L_p}\dl s\\
&\quad+
C_p \Big(\int_0^T\big\|F(r(s),v(s))-F(r^N_{k_s},v^N_{k_s})\big\|_{L_p}^2\dl s\Big)^{1/2},
\end{aligned}
\end{equation}
where $C_p\in(0,\infty)$ does not depend on $N$.
Using analogous arguments as for the terms in the right hand side of \eqref{eq:uniform_conv_2} in the proof of Proposition~\ref{prop:uniform_conv_1}, one sees that the terms on the right hand side of \eqref{eq:uniform_conv_6} converge to zero as $N\to\infty$.
Finally, the identity $\hat\mu^N(kh)-\mu^N_k=-\sum_{j=0}^{k-1}\big(\lambdarem(r^N_j,v^N_j,h)+\Lambdarem(r^N_j,v^N_j,h)\,\Delta_hw_{j}\big)$ and Lemma~\ref{lem:uniform_conv_1} yield 
\begin{equation}\label{eq:uniform_conv_7}
\begin{aligned}
\lim_{N\to\infty}\Big\|\sup_{k\in\{1,\ldots,N\}}\big\|\bar\mu^N(kh)-\mu^N_k\big\|\Big\|_{L_p}=0.
\end{aligned}
\end{equation}
Combining \eqref{eq:uniform_conv_6} and \eqref{eq:uniform_conv_7} finishes the proof.
\end{proof}

The following lemma completes the proof of Theorem~\ref{theo:mainend}. Its verification is straightforward and therefore left to the reader.

\begin{lemma}\label{lem:uniform_conv_2}
Let $(Y(t))_{t\in[0,T]}$, $(Y^N_k)_{k\in\{0,\ldots,N\}}$, $N\in\bN$, be stochastic processes taking values in a Banach space $(B,\|\cdot\|)$, defined on the same underlying probability space. 
Assume that $\lim_{N\to\infty}\bE\big(\sup_{k\in\{0,\ldots,N\}}\|Y(kT/N)-Y^N_k\|^p\big)=0$ and $\bE\big(\sup_{t\in[0,T]}\|Y(t)\|^p\big)<\infty$ for some $p\in[1,\infty)$, and that $(Y(t))_{t\in[0,T]}$ has continuous sample paths. Let $(\tilde Y(t))_{t\in[0,T]}$ be defined by piecewise constant or piecewise linear interpolation of $(Y^N_k)_{k\in\{0,\ldots,N\}}$, cf.~Remark~\ref{rem:interpol}. Then we also have
$
\lim_{N\to\infty}\bE\big(\sup_{t\in[0,T]}\|Y(t)-\tilde Y^N(t)\|^p\big)=0.
$
\end{lemma}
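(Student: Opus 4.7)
The plan is to decompose the pointwise error via a triangle inequality and then reduce each resulting piece either to the hypothesis or to a modulus-of-continuity estimate for the limiting process. For $t\in[0,T]$ and $N\in\bN$ I set $h:=T/N$ and $k_t:=\lfloor t/h\rfloor$, and write
\[
\big\|Y(t) - \tilde Y^N(t)\big\| \leq \big\|Y(t) - Y(k_t h)\big\| + \big\|Y(k_t h) - Y^N_{k_t}\big\| + \big\|Y^N_{k_t} - \tilde Y^N(t)\big\|.
\]
Raising this to the $p$-th power, using $(a+b+c)^p \leq 3^{p-1}(a^p+b^p+c^p)$, and taking the supremum over $t\in[0,T]$ reduces the claim to showing that each of the three resulting summands tends to zero in $L_p(\Omega)$.

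The middle summand is controlled directly by the hypothesis, since $\sup_{t\in[0,T]}\|Y(k_t h)-Y^N_{k_t}\| \leq \sup_{k\in\{0,\ldots,N\}}\|Y(kh)-Y^N_k\|$. For the first summand I would introduce the (pathwise) modulus of continuity $\omega_Y(\delta):=\sup\{\|Y(s)-Y(t)\|:s,t\in[0,T],\,|s-t|\leq\delta\}$; then $\sup_{t\in[0,T]}\|Y(t)-Y(k_t h)\| \leq \omega_Y(h)$. Because the sample paths of $Y$ are continuous on the compact interval $[0,T]$ they are uniformly continuous, so $\omega_Y(h)\to 0$ almost surely as $h\to 0$. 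Since $\omega_Y(h)\leq 2\sup_{t\in[0,T]}\|Y(t)\|$ and the latter lies in $L_p$ by hypothesis, dominated convergence gives $\bE[\omega_Y(h)^p]\to 0$.

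For the third summand the piecewise constant case is trivial, as $\tilde Y^N(t)=Y^N_{k_t}$ identically. In the piecewise linear case, for $t\in[kh,(k+1)h]$ one has
\[
\big\|Y^N_{k_t} - \tilde Y^N(t)\big\| = \tfrac{t-kh}{h}\big\|Y^N_{k+1}-Y^N_k\big\| \leq \big\|Y^N_{k+1}-Y^N_k\big\|,
\]
and inserting $Y((k+1)h)$ and $Y(kh)$ via the triangle inequality yields $\|Y^N_{k+1}-Y^N_k\| \leq 2\sup_{j\in\{0,\ldots,N\}}\|Y(jh)-Y^N_j\| + \omega_Y(h)$. Thus the third summand is bounded in terms of the first two, and the three estimates combined give the conclusion. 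No step here is genuinely delicate; the only point worth emphasising is the dominated-convergence argument for the modulus of continuity, which explains why the integrability hypothesis $\bE(\sup_{t\in[0,T]}\|Y(t)\|^p)<\infty$ is required.
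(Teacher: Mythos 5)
Your argument is correct and is precisely the straightforward decomposition (interpolation error of $Y$ via its modulus of continuity, grid error from the hypothesis, interpolation error of $Y^N$ bounded by the other two) that the paper has in mind when it leaves the verification to the reader. Nothing further is needed.
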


\section{Examples}\label{sec:examples}
Here we present examples of SDAEs which fit into our setting.

\subsection{Stochastic pendulum}\label{subsec:pendel}
The most commonly known example from the DAE context is the pendulum. 
We use it as a toy example in order to illustrate our theory.
For simplicity, a $2$-dimensional setting is considered.

We are interested in approximating two $\bR^2$-valued processes $(r(t))_{t\in[0,T]}$ and $(v(t))_{t\in[0,T]}$ which model the position and velocity of the endpoint of an 
idealized pendulum with unit length. 
We consider two external forces, 
a gravitational force scaled by some constant $c_g\geq0$ and a stochastic force modelled by the increments of an $\bR^2$-valued Brownian motion $(w(t))_{t\in[0,T]}$. Assuming also unit mass $M=\Id$, 
the corresponding stochastic differential-algebraic system reads
\begin{equation*}
\begin{aligned}
\dl r(t)&=v(t)\,\dl t\\
\dl v(t)&=\begin{pmatrix}
0\\-c_g
\end{pmatrix}\dl t+\dl w(t)-r(t)\,\dl \mu(t)\\
\|r(t)\|&=1.
\end{aligned}
\end{equation*}
Note that this is an SDAE of the type \eqref{eq:mainSDAE} if we 
rewrite the constraint equivalently as $g(r(t))=0$ with the quadratic constraint function
$$g(x):=\frac12(1-\|x\|^2),\quad x\in\bR^2.$$
In this context, the half-explicit drift-truncated Euler scheme \eqref{eq:approximationintro}, respectively \eqref{eq:approximation}, reads 
\begin{subequations}
\begin{align}
\begin{split}\label{eq:pendulumeqnonlin}
r_{k+1}&=r_k+\eta_k\, v_k\, h-r_k\, \kappa_{k+1}\\
\frac12(1-\|r_{k+1}\|^2)&=0
\end{split}\\
\begin{split}\label{eq:pendulumeqlin}
v_{k+1}&=v_k+\eta_k\left(\begin{array}{c} 0\\-1\end{array}\right)h+\Delta_hw_k-r_k\,\lambda_{k+1}\\
-\langle r_{k+1},v_{k+1}\rangle&=0
\end{split}
\end{align}
\end{subequations}
with deterministic initial values $(r_0,v_0)\in T\cM$, say.
As before,  $h=T/N$ is the step size, $\eta_k=\eta(r_k,v_k,h)$ is defined by \eqref{eq:defetaintro}, respectively \eqref{eq:defeta}, and $\Delta_hw_k=w((k+1)h)-w(kh)$ denotes an increment of Brownian motion.

In order to illustrate our algorithm from a geometrical point of view, let us first consider what happens if we omit the truncation factor $\eta_k$. In the constellation shown in Figure~\ref{fig:sub1}, the 
system \eqref{eq:pendulumeqnonlin} with $1$ in place of $\eta_k$ 
has two solutions, but it may also happen that there is no solution at all as shown in Figure~\ref{fig:sub2}. 
The truncation factor $\eta_k$ ensures that the latter case does not occur. Note that Assumption \ref{ass:g} is fulfilled for all $\varepsilon\in(0,1)$ and, depending on the choice of $\varepsilon$, the constants appearing in Theorem~\ref{theo:mainonestep} and Corollary~\ref{cor:applyingonestep} are such that $C_g\in(1,2)$, $\Ceta\in(1/32,1/4)$ and $\Clm\in(1/128,1/8)$. As $\|\eta_kv_kh\|$ is bounded by $C_\eta$, the system \eqref{eq:pendulumeqnonlin} thus has always two solutions but only one with $|\kappa_{k+1}|< C_\kappa$.
\begin{figure}[h!]
	\centering
	\begin{subfigure}{.5\textwidth}
		\centering
			\begin{tikzpicture}[scale=1.6,cap=round]
			\tikzstyle{axes}=[]
			\colorlet{wrongcolor}{red}
			\colorlet{problemcolor}{red}
			\colorlet{rightcolor}{green}
			\colorlet{stepcolor}{blue}
			\draw [very thick] circle(1cm);
			\begin{scope}[style=axes]
			\draw (-1.5,0) -- (2.2,0) node[right] {};
			\draw (0,-1.5) -- (0,1.5) node[above] {};
			\foreach \x/\xtext in {-1,1}
			\draw[xshift=\x cm] (0pt,1pt) -- (0pt,-1pt) node[below,fill=white]
			{$\xtext$};
			\foreach \y/\ytext in {-1,1}
			\draw[yshift=\y cm] (1pt,0pt) -- (-1pt,0pt) node[left,fill=white]
			{$\ytext$};
			\end{scope}
			\draw[->,thick] (0,0) -- node[above left=1pt,fill=white] {$r_k$} (0.5,0.866);
			\draw[->,thick] (0,0) -- node[below=2pt,fill=white] {$r_{k+1}$} (0.9928,0.1196);
			\draw [help lines,rightcolor](1.6928,1.332) -- (0.0928,-1.44) node[below] {};
			\draw[->,very thick,stepcolor]
			(0.5,0.866) -- node[above right=3pt,fill=white] {$v_k h$} (1.1928,0.466);
			\draw[->,very thick,wrongcolor]
			(1.1928,0.466) -- node[below right=3pt,fill=white] {\small{solution 2}} (0.3928,-0.9196);
			\draw[->,very thick,rightcolor]
			(1.1928,0.466) -- node[below right=3pt,fill=white] {\small{solution 1}} (0.9928,0.1196);
			\end{tikzpicture}
		\caption{Step with two solutions}
		\label{fig:sub1}
	\end{subfigure}%
	\begin{subfigure}{.5\textwidth}
		\centering
			\begin{tikzpicture}[scale=1.6,cap=round]
			\tikzstyle{axes}=[]
			\colorlet{wrongcolor}{red}
			\colorlet{problemcolor}{red}
			\colorlet{rightcolor}{green}
			\colorlet{stepcolor}{blue}
			\draw [very thick] circle(1cm);
			\begin{scope}[style=axes]
			\draw (-1.5,0) -- (2.2,0) node[right] {};
			\draw (0,-1.5) -- (0,1.5) node[above] {};
			\foreach \x/\xtext in {-1,1}
			\draw[xshift=\x cm] (0pt,1pt) -- (0pt,-1pt) node[below,fill=white]
			{$\xtext$};
			\foreach \y/\ytext in {-1,1}
			\draw[yshift=\y cm] (1pt,0pt) -- (-1pt,0pt) node[left,fill=white]
			{$\ytext$};
			\end{scope}
			\draw[->,thick] (0,0) -- node[above left=1pt,fill=white] {$r_k$} (0.5,0.866);
			\draw [help lines,wrongcolor](2.0392,1.132) -- (0.5392,-1.466) node[below] {};
			\draw[->,very thick,stepcolor]
			(0.5,0.866) -- node[above right=3pt,fill=white] {$v_k h$} (1.5392,0.266);
			\end{tikzpicture}
		\caption{Step without solution}
		\label{fig:sub2}
	\end{subfigure}
	\caption{An illustration of two non-truncated cases}
	\label{fig:test}
\end{figure}
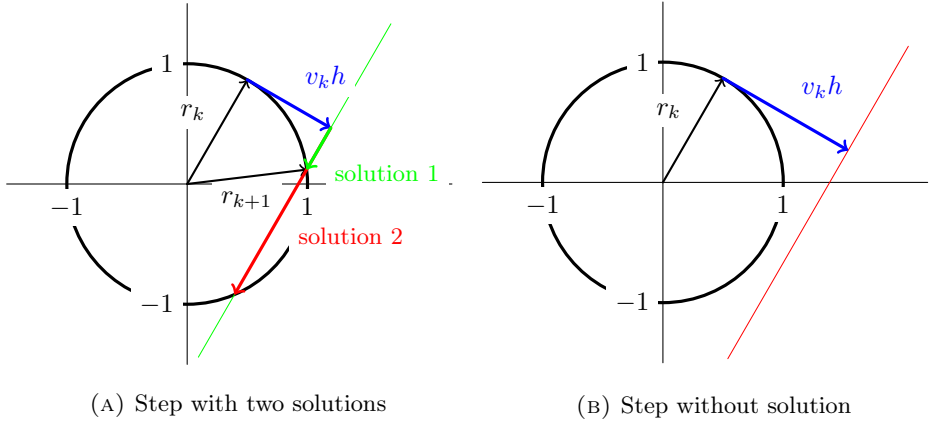

The stochastic pendulum also shows that even if the coefficients $a$ and $B$ in \eqref{eq:mainSDAE} are chosen to be constant, the drift coefficient $\drift$ of the corresponding inherent SDE \eqref{eq:inherentSDE} in the form \eqref{eq:SDEstrongsetting} may fail to be globally one-sided Lipschitz continuous. 
Recall that the locally Lipschitz continuous mapping $\drift\colon\bR^{4}\to\bR^{4}$ is said to be globally one-sided Lipschitz continuous if there exists a constant $C\in(0,\infty)$ such that
$\langle x-y,\drift(x)-\drift(y)\rangle\leq C\|x-y\|^2\,\text{ for all }x,y\in\bR^{4}$.
This property is of interest as it is well-known that many truncated and tamed Euler-type schemes have strong order of convergence $1/2$ if the drift coefficient of the underlying SDE fulfills, among others, a one-sided Lipschitz condition, see, e.g.,~\cite{beyn:p:2016,fang:p:2016,higham:p:2002,hutzenthaler:p:2015,hutzenthaler:p:2012,mao:p:2016,sabanis:p:2013}. 
To see that such a condition does not hold in our example let us set $c_g=0$ for simplicity. The inherent SDE \eqref{eq:inherentSDE} then takes the form
\begin{align*}
\dl r(t)&=v(t)\,\dl t\\
\dl v(t)&=-\frac{\|v(t)\|^2}{\|r(t)\|^2}r(t)\,\dl t+\Big(\Id-\frac1{\|r(t)\|^2}r(t)(r(t))^\top\Big)\,\dl w(t),
\end{align*}
where $\Id\in\bR^{2\times 2}$ is the identity matrix. Thus, using the notation from Section~\ref{subsec:SDEsetting}, we have 
$\drift(x)=\big(x^{(2)},-\|x^{(2)}\|^2x^{(1)}\big)^\top$ 
for all $x=(x^{(1)},x^{(2)})^\top\in T\cM\subset\bR^4$.
Now let $C\in(0,\infty)$ be arbitrary and consider $x=(x^{(1)},x^{(2)})^\top$, $y=(y^{(1)},y^{(2)})^\top\in T\cM$ such that $x^{(1)},y^{(1)}\in\bR^2$ 
are orthogonal 
and $x^{(2)}=2(C\vee 1)y^{(1)}$, $y^{(2)}=2(C\vee 1)x^{(1)}$.
A simple calculation shows 
\begin{align*}
\langle x-y,\,\drift(x)-\drift(y)\rangle
&=
\big\langle x^{(1)}-y^{(1)},\,x^{(2)}-y^{(2)}\big\rangle\\
&\quad+
\big\langle x^{(2)}-y^{(2)},\,-\|x^{(2)}\|^2x^{(1)}+\|y^{(2)}\|^2y^{(1)}\big\rangle\\
&=
\big(-2(C\vee 1)+(2(C\vee 1))^3\big)\big\|x^{(1)}-y^{(1)}\big\|^2\\
&>C\big(1+4(C\vee 1)^2\big)\big\|x^{(1)}-y^{(1)}\big\|^2\\
&=
C\|x-y\|^2,
\end{align*}
i.e., $\drift$ is not one-sided Lipschitz continuous.

\subsection{Constrained Langevin dynamics}\label{subsec:langevin}
The probably best studied class of equations that fits into our setting are the constrained Langevin-type equations used in molecular dynamics, see~\cite{leimkuhler:p:2016,lelievre:p:2012,ciccotti:p:2006} and the references therein.
For instance, in \cite{lelievre:p:2012} the authors consider equations of the form
\begin{align*}
&\dl q_t=M^{-1}p_t\,\dl t\\
&\dl p_t=-\nabla V(q_t)\,\dl t-\gamma (q_t) M^{-1}p_t\,\dl t+\sigma (q_t)\,\dl W_t+\nabla \xi (q_t)\,\dl \lambda_t\\
&\xi (q_t)=z,
\end{align*}
where $V$ is a potential and $\gamma$ a friction function.
Under mild assumptions on the coefficients an the constraint function $\xi$ this system fits into our setting with $r(t)=q_t$, $v(t)=M^{-1}p_t$, $\mu(t)=\lambda_t$, $a(x,y)=[-\nabla V(x)-\gamma(x)y]$, $B(x,y)=\sigma(x)$ and $g(x)=\xi(x)-z$. 
Note that for such equations it is by now standard to use splitting schemes, consisting of consecutive pushes in separate directions which correspond to different parts of the dynamics, cf.~\cite{lelievre:p:2012,leimkuhler:p:2016}.
Such schemes are tailor-made for the considered specific types of equations and lie beyond the scope of the present article.

\subsection{Stochastic fiber dynamics}\label{subsec:fiber}
In the context of the modeling and simulation of industrial production processes of non-woven technical textiles, one is interested in the dynamics of long slender elastic inextensible fibers in a turbulent airflow \cite{marheineke:p:2006,marheineke:p:2011}. 
Considering a fixed spatial semi-discretization of the corresponding stochastic space-time model leads to the dynamics of a chain of $N$ equidistant points $\boldsymbol r_i(t)$, $i=1,\ldots,N$, in three-dimensional Euclidean space, described by an SDAE of the form
\begin{align*}
\dl r(t)&=v(t)\,\dl t\\
\dl v(t)&=\Big[f_{\text{int}}(r(t))+ f_{\text{grav}}+f_{\text{air}}\big(r(t),v(t)\big)\Big] \dl t\\
&\quad +L_{\text{air}}\big(r(t),r(t)\big)D_{\text{turb}}\big(r(t),v(t)\big)\,\dl w(t)+\nabla g(r(t))\,\dl \mu(t)\\
g(r(t))&=0,
\end{align*}
see  \cite{lindner:p:2016,lindner:p:2018}.
Here $r(t)=(\boldsymbol r_i(t))_{i\in\{1,\ldots,N\}}$ is the $3N$-dimensional position vector, $v(t)=(\boldsymbol v_i(t))_{i\in\{1,\ldots,N\}}$ is the corresponding $3N$-dimensional velocity vector and $w$ is a $N$-dimensional Brownian motion.
The inextensibility constraint function $g\colon\bR^{3N}\to\bR^{N-1}$ is given by
$$g(x)=\Big(\frac12-\frac{\|\boldsymbol x_{i+1}-\boldsymbol x_i\|^2}{2(\Delta s)^2}\Big)_{i\in\{1,\ldots,N-1\}},$$
$x=(\boldsymbol{x}_i)_{i\in\{1,\ldots,N\}}\in\bR^{3N}$, where $\Delta s>0$ is a fixed spatial discretization parameter. 
Accordingly,  the Lagrange multiplier process $\mu$ takes values in $\bR^{N-1}$;
 the corresponding term $\nabla g(r(t)) \dl \mu(t)$ models the inner traction forces.
The internal forces $f_{\text{int}}$ are due to the bending stiffness of the fiber and are described by a linear operator which represents a discretized higher-order spatial derivative.
Further, $f_{\text{grav}}$ is a  constant gravitational force and the turbulent air-drag force consists of a deterministic part $f_{air}$ and a stochastic part involving the matrix functions $L_{\text{air}}$ and $D_{\text{turb}}$, see \cite{lindner:p:2016,marheineke:p:2011} for details concerning the modeling of the coefficients.

\begin{appendix}
\section{Solvability of the SDAE}\label{sec:appsolv}

Here we sketch of the key steps of the proof of Theorem~\ref{theo:solutionandinherent}, which is a slight generalization of \cite[Theorem~3.1]{lindner:p:2016}. For more details we refer to the respective arguments in \cite{lindner:p:2016}.
Theorem~\ref{theo:solutionandinherent} is a consequence of Lemma~\ref{lem:inherentSDE_app} and Theorem~\ref{theo:solvability_app} below.

\begin{lemma}
\label{lem:inherentSDE_app}
Let the assumptions in Section~\ref{subsec:assumptions} be fulfilled.
	If $(r,v,\mu)$ is a global strong solution to the SDAE~\eqref{eq:mainSDAE} with initial conditions $r_0,v_0$, then $\mu$ has the representation \eqref{eq:defmu} and $(r,v)$ is a global strong solution to the inherent SDE~\eqref{eq:inherentSDE}.
	Conversely, consider arbitrary locally Lipschitz continuous extensions of the coefficients in the inherent SDE~\eqref{eq:inherentSDE}
to the whole space $\bR^n\times\bR^n$, and let $(r,v)$ be a global strong solution to \eqref{eq:inherentSDE} with initial conditions $r_0,v_0$.
Then we have $\bP\big(g(r(t))=0\text{ for all }t\geq0\big)=1$ and, if further $\mu$ is the continuous, $\bR^{m}$-valued semimartingale defined by \eqref{eq:defmu}, then $(r,v,\mu)$ is a global strong solution to the SDAE~\eqref{eq:mainSDAE}.
\end{lemma}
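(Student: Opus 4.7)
My plan is to handle the two directions of the equivalence separately, both relying on Itô's formula applied to $g(r(t))$ and the tangency quantity $Dg(r(t))v(t)$. The key algebraic identity throughout is $Dg(x)M^{-1}\nabla g(x)=G_M(x)$, together with $Dg(x)M^{-1}P_M(x)=0$, which both hold for $x\in\cM^\varepsilon$ by the definitions in Section~\ref{subsec:assumptions}.

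For the forward direction, suppose $(r,v,\mu)$ is a global strong solution to the SDAE \eqref{eq:mainSDAE}. Since $g(r(t))=0$ for all $t$, the $dr=v\,dt$ equation and Itô (or here, the chain rule, as $r$ has bounded variation) give
\begin{equation*}
0=dg(r(t))=Dg(r(t))\,v(t)\,dt,
\end{equation*}
so $Dg(r(t))v(t)=0$ almost surely. Next I would apply Itô's formula to the $\bR^m$-valued process $t\mapsto Dg(r(t))v(t)$, using the SDE for $Mv$ from \eqref{eq:mainSDAE}. Because $dr=v\,dt$ has zero quadratic variation and $\mu$ is a semimartingale, this yields
\begin{equation*}
0=d\bigl(Dg(r)v\bigr)=D^2g(r)(v,v)\,dt+Dg(r)M^{-1}\bigl[a(r,v)\,dt+B(r,v)\,dw\bigr]+G_M(r)\,d\mu.
\end{equation*}
Since $G_M(r(t))$ is invertible on $\cM$ by Assumption~\ref{ass:g}, solving for $d\mu$ produces exactly the representation \eqref{eq:defmu}. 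Substituting this expression back into the $Mv$-equation and using $\Id-\nabla g(r)G_M^{-1}(r)Dg(r)M^{-1}=P_M(r)$ yields the inherent SDE \eqref{eq:inherentSDE}.

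For the converse, let $(r,v)$ solve the (extended) SDE \eqref{eq:inherentSDE} with $(r_0,v_0)\in T\cM$. I plan to show by a self-consistency/localization argument that $r$ never leaves $\cM$. Define the stopping time $\tau:=\inf\{t\geq0:(r(t),v(t))\notin(T\cM)^\varepsilon\}$ and work on $[0,\tau)$, where the coefficients of \eqref{eq:inherentSDE} coincide with their intrinsic expressions. Applying Itô to $Dg(r(\cdot))v(\cdot)$ and using the two identities $Dg(r)M^{-1}P_M(r)=0$ and $Dg(r)M^{-1}\nabla g(r)G_M^{-1}(r)=\Id$ produces
\begin{equation*}
d\bigl(Dg(r)v\bigr)=D^2g(r)(v,v)\,dt-D^2g(r)(v,v)\,dt=0\quad\text{on }[0,\tau),
\end{equation*}
so $Dg(r(t))v(t)=Dg(r_0)v_0=0$ for $t<\tau$. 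Then $dg(r(t))=Dg(r(t))v(t)\,dt=0$ gives $g(r(t))=g(r_0)=0$, hence $(r(t),v(t))\in T\cM\subset(T\cM)^\varepsilon$ for $t<\tau$. By path continuity this forces $\tau=\infty$ a.s. Defining $\mu$ by \eqref{eq:defmu} now, one reads off from the derivation in the forward direction that $(r,v,\mu)$ satisfies the dynamics of \eqref{eq:mainSDAE}, and $\mu$ is a continuous semimartingale with $\mu(0)=0$ by construction.

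The main obstacle is the converse direction: the coefficients in \eqref{eq:inherentSDE} are only natural on a neighborhood of $T\cM$, and a priori one must guard against the possibility that the extended solution $(r,v)$ leaves this neighborhood before the manifold invariance is established. The stopping-time argument above closes this loop since the Itô computation showing $g(r(t))\equiv 0$ is valid precisely on $[0,\tau)$, and this very conclusion then rules out $\tau<\infty$. A secondary technical point is the $p$-th moment bound $\bE\bigl(\sup_{t\leq T}\|(r,v,\mu)\|^p\bigr)<\infty$; I would obtain this from the moment estimates for the solution of the inherent SDE, established as in \cite{lindner:p:2016} via a standard truncation-and-Gr\"onwall argument exploiting the one-sided linear growth condition on $(x,y)\mapsto(y,a(x,y))$ from Assumption~\ref{ass:aB} combined with the boundedness properties of $P_M$, $G_M^{-1}$ and $D^2g$ on $\cM$ from Assumption~\ref{ass:g}, and then transfer it to $\mu$ via the explicit formula \eqref{eq:defmu} and the Burkholder--Davis--Gundy inequality.
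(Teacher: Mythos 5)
Your proof is correct and follows essentially the same route as the paper: both directions hinge on applying It\^o's formula to $(x,y)\mapsto Dg(x)y$, noting that the second-order terms vanish because $r$ has bounded variation, and then using the invertibility of $G_M(r)$ together with the identities $Dg(r)M^{-1}\nabla g(r)=G_M(r)$ and $Dg(r)M^{-1}P_M(r)=0$. Your stopping-time localization for the converse correctly fills in a detail the paper compresses into ``the converse statement is shown similarly'': one must indeed first confine the extended solution to $(T\cM)^\varepsilon$, derive $Dg(r)v\equiv 0$ and $g(r)\equiv 0$ there, and then use path continuity to conclude the exit time is infinite.
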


\begin{proof}
	Let $(r,v,\mu)$ be a global strong solution to the SDAE~\eqref{eq:mainSDAE} with initial conditions $r_0,v_0$. With probability one we have 
	$Dg(r(t))v(t)=\frac{\dl}{\dl t} g(r(t))=0$ for all $t\geq0$.
	As $g$ is a $C^3$-function by Assumption~\ref{ass:g}, we can apply It\^{o}'s formula to the function $f\colon\mathbb R^n\times \mathbb R^n\to\mathbb R^m\colon (x,y)\mapsto Dg(x)y$  and the process $(r,v)$ to obtain
	\begin{align}\label{eq:Ito_app_1}
	0
	&=
	\int_0^t D^2g(r(s))(v(s),v(s))\,\dl s+\int_0^t Dg(r(s))\,\dl v(s)
	\end{align}
	for all $t\geq0$, $\bP$-a.s..
	Note that all second order terms in It\^{o}'s formula vanish since $r$ is a bounded variation process and $f$ is linear in $y$.
	Integrating $(G_M^{-1}(r(t)))_{t\geq0}$ w.r.t the process on the right hand side of \eqref{eq:Ito_app_1} and using that
	\begin{align*}
	\dl v(t)	
	&=
	M^{-1}\Big[a(r(t),v(t))\,\dl t+ B(r(t),v(t))\,\dl w(t)+\nabla g(r(t))\,\dl \mu(t)\Big]
	\end{align*}
	by assumption, we obtain that $\mu$ satisfies \eqref{eq:defmu}. As a consequence, $(r,v)$ solves the inherent SDE~\ref{eq:inherentSDE}.
	The converse statement is shown similarly.
\end{proof}

\begin{theorem}
\label{theo:solvability_app}
Under the assumptions in Section~\ref{subsec:assumptions}, there exists a unique (up to indistinguishability) global strong solution $(r,v)$ the the inherent SDE~\eqref{eq:inherentSDE}.
For all $p,T\in[1,\infty)$  the $p$-th moment $\bE\big(\sup_{t\in[0,T]}\|(r(t),v(t))\|^p\big)$ is finite.
\end{theorem}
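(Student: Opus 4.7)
The plan is to reduce the inherent SDE \eqref{eq:inherentSDE}, whose coefficients are only defined in a neighborhood of $T\cM$, to a standard SDE on the whole space $\bR^{2n}$ by invoking the extension $(\drift,\diff)$ constructed in Lemma~\ref{lem:extended_coefficients}. By \eqref{eq:assbabB} the extended drift $\drift$ satisfies a global one-sided linear growth condition with respect to $\langle\cdot,\cdot\rangle_*$ and has polynomial growth, while $\diff$ satisfies a global linear growth bound; both mappings are locally Lipschitz continuous on $\bR^{2n}$. Under these conditions, a classical Khasminskii-type argument (see, e.g., \cite[Theorem~3.1]{lindner:p:2016} or the standard references therein) yields a unique global strong solution $X=(r,v)^\top\colon[0,\infty)\times\Omega\to\bR^{2n}$ of \eqref{eq:SDEstrongsetting} with initial value $X_0=(r_0,v_0)^\top$, together with finite moments $\bE(\sup_{t\in[0,T]}\|X(t)\|^p)<\infty$ for every $p,T\in[1,\infty)$.

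First I would make the moment bounds explicit: localise by stopping times $\tau_R=\inf\{t\ge 0:\|X(t)\|_*\ge R\}$, apply It\^o's formula to $(1+\|X(t\wedge\tau_R)\|_*^2)^{p/2}$, use the one-sided linear growth estimate in \eqref{eq:assbabB} together with Burkholder-Davis-Gundy and Gr\"onwall's inequality to obtain uniform-in-$R$ bounds, and then let $R\to\infty$. The assumption that $r_0,v_0\in L_p(\Omega)$ for all $p\in[1,\infty)$ (Assumption~\ref{ass:init}) propagates through this computation, giving the claimed $L_p$-bound on $\sup_{t\in[0,T]}\|X(t)\|$ in every order.

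The key remaining step -- and the main delicate point -- is to verify that the extended solution actually solves the inherent SDE, i.e.\ that $X(t)\in T\cM$ for all $t\ge 0$ almost surely. Since $(r_0,v_0)\in T\cM$ a.s.\ and since the extended coefficients coincide with the original ones on $(T\cM)^{\varepsilon/2}$, it suffices to apply It\^o's formula to $g(r(t))$ and to $Dg(r(t))v(t)$ on the random interval during which $X(t)$ stays inside that neighborhood. Using the identities $Dg(r)M^{-1}\nabla g(r)=G_M(r)$ and $P_M(r)=\Id-\nabla g(r)G_M^{-1}(r)Dg(r)M^{-1}$, the drift contribution in $d\big(Dg(r(t))v(t)\big)$ cancels to zero and the diffusion contribution $Dg(r)M^{-1}P_M(r)B(r,v)$ vanishes identically; the process $g(r(t))$ is bounded-variation with derivative $Dg(r(t))v(t)$, hence remains equal to zero. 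Together this shows pathwise that $(g(r(t)),Dg(r(t))v(t))\equiv 0$ as long as the solution stays in the tubular neighborhood, and a bootstrap argument shows that the exit time is in fact infinite.

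Having established invariance of $T\cM$, the triple $(r,v)$ is a global strong solution to \eqref{eq:inherentSDE} in the original sense, with the moment bound inherited from Step~1. For uniqueness I would argue that any global strong solution to \eqref{eq:inherentSDE} necessarily takes values in $T\cM\subset(T\cM)^{\varepsilon/2}$, where the original and the extended coefficients agree, and is therefore also a strong solution to \eqref{eq:SDEstrongsetting}; uniqueness for the latter (through the locally Lipschitz property of $\drift,\diff$ combined with a standard stopping-time comparison) transfers back. Combining this with Lemma~\ref{lem:inherentSDE_app} yields the existence, uniqueness and integrability statements asserted in Theorem~\ref{theo:solutionandinherent}. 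The only conceptual deviation from \cite{lindner:p:2016} is the weaker assumption on $a$ (one-sided linear growth instead of global Lipschitz) and the more general constraint function $g$; both are absorbed into the one-sided estimate verified in Lemma~\ref{lem:extended_coefficients} and therefore require no further changes in the argument.
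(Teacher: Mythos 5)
Your proposal is correct and rests on the same core computation as the paper (It\^o's formula for $\|(r,M^{1/2}v)\|_*^2$, the identities $\langle v,P_M(r)z\rangle=\langle v,z\rangle$ and $\langle v,\nabla g(r)z\rangle=0$ on $T\cM$, then BDG and Gronwall), but it reorganizes the logic. The paper takes an \emph{arbitrary} locally Lipschitz extension, obtains a solution only up to its explosion time, and must therefore invoke the manifold invariance \emph{first} so that the one-sided estimates \eqref{eq:innerproducts_app} can be applied pathwise along the stopped solution in \eqref{eq:Ito_app}; non-explosion and the moment bound then follow. You instead route everything through the specific extension of Lemma~\ref{lem:extended_coefficients}, whose one-sided linear growth holds globally on $\bR^{2n}$, which lets you get global existence and all moments up front and defer the invariance of $T\cM$ to a second, independent step. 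Both orders work and there is no circularity (the construction in Lemma~\ref{lem:extended_coefficients} uses no solvability input), and your version has the merit of spelling out the invariance argument --- the cancellations $Dg(r)M^{-1}P_M(r)=0$ and $G_M(r)G_M^{-1}(r)D^2g(r)(v,v)=D^2g(r)(v,v)$ giving $\dl\big(Dg(r(t))v(t)\big)=0$, hence $g(r(t))\equiv0$, plus the closedness of $T\cM$ for the bootstrap --- which the paper only asserts with a reference to \cite{lindner:p:2016}. The uniqueness transfer (every solution of the inherent SDE lives on $T\cM$, where all admissible extensions agree, so pathwise uniqueness for the extended locally Lipschitz SDE applies) matches the paper's reasoning.
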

\begin{proof}
	Due to our assumptions, the coefficent functions of the inherent SDE \eqref{eq:inherentSDE} can be extended to locally Lipschitz continuous functions defined on the whole space $\bR^n\times \bR^n$. 
	Hence, there exists a unique (up to indistinguishability) strong solution $(r,v)$ to \eqref{eq:inherentSDE} up to its explosion time, see, e.g., 
	\cite[Theorem~1.1.8]{hsu:b:2002}. 
	The latter is a $\bP$-a.s.\ uniquely defined $(\cF_t)$-stopping time $\sigma\colon \Omega\to (0,\infty]$ such that $\lim_{t\nearrow \sigma}\|(r(t),v(t))\|=\infty$ $\bP$-a.s.\ on $\{\sigma<\infty\}$.
	The solution does not depend on the choice of the extension of the coefficient functions in \eqref{eq:inherentSDE} since, $\bP$-almost surely, $(r(t),v(t))\in T\cM$ for all $t\in[0,\sigma)$.
	Define $(\cF_t)$-stopping times $\sigma_K:=\inf\{t\in[0,\sigma):\|(r(t),v(t))\|\geq K\}$, $K\in\bN$, with $\inf\emptyset:=\infty$, so that $\sigma_K\nearrow\sigma$ $\bP$-a.s..  We have
	\begin{align}\label{eq:localSDE_app}
	\begin{split}
	\dl r(t\wedge\sigma_K)&=\mathds 1_{[0,\sigma_K]}(t)v(t)\,\dl t,\\
	\dl v(t\wedge\sigma_K)&=\mathds 1_{[0,\sigma_K]}(t)M^{-1}\Big\{P_M(r(t))\big[a(r(t),v(t))\,\dl t+ B(r(t),v(t))\,\dl w(t)\big]\\
	&\quad-\nabla g (r(t))\,G_M^{-1}(r(t))\,D^2g(r(t))(v(t),v(t))\,\dl t\Big\}.
	\end{split}
	\end{align}
	In order to verify Theorem~\ref{theo:solvability_app} it suffices to show that for all $T\in[0,\infty)$, $p\in[4,\infty)$ there exists a constant $C_{T,p}\in[0,\infty)$, depending on $T$, $p$, $\bE\|(r_0,v_0)\|^p$, $C_a$, $C_B$, $C_g$, $M$ but not on $K$, such that
	\begin{align}\label{eq:normestimate_app}
	\bE \sup_{t\in [0,T]}\|(r(t\wedge \sigma_K),v(t\wedge \sigma_K))\|^p\leq C_{T,p}.
	\end{align}
	Indeed, arguing as in \cite[Proposition~3.1]{lindner:p:2016} it follows that $\bP(\sigma=\infty)=1$, and by monotone convergence one also obtains the second assertion in Theorem~\ref{theo:solvability_app}.
	In order to verify \eqref{eq:normestimate_app} we apply It\^{o}'s formula to the function $\bR^n\times\bR^n\ni(x,y)\mapsto\|(x,M^{1/2}y)\|^2\in[0,\infty)$ and the stopped process $((r(t\wedge\sigma_K),v(t\wedge\sigma_K)))_{t\geq0}$ satisfying \eqref{eq:localSDE_app} to  obtain
	\begin{align}
	\begin{split}\label{eq:Ito_app}
	&\big\|\big(r(t\wedge\sigma_K),M^{1/2}v(t\wedge\sigma_K)\big)\big\|^2\\
	&=
	\big\|\big(r_0,M^{1/2}v_0\big)\big\|^2+\int_0^r\mathds 1_{[0,\sigma_K]}(s)2\langle r(s),v(s)\rangle\,\dl s\\
	&\quad+
	\int_0^t\mathds 1_{[0,\sigma_K]}(s)2\Big\langle v(s),P_M(r(s))\big[a(r,v)\,\dl s+ B(r(s),v(s))\,\dl w(s)\big]\Big\rangle\\
	&\quad-
	\int_0^t\mathds 1_{[0,\sigma_K]}(s)2\Big\langle v(s),\nabla g (r(s))\,G_M^{-1}(r(s))\,D^2g(r(s))(v(s),v(s))\,\dl s\Big\rangle\\
	&\quad+
	\int_0^t\mathds 1_{[0,\sigma_K]}(s)\big\|M^{-1/2}P_M(r(s))B(r(s),v(s))\big\|^2\,\dl s
	\end{split}
	\end{align}
	Next, observe that for all $(x,y)\in T\cM$ and $z\in\bR^n$ we have
	\begin{align}\label{eq:innerproducts_app}
	\begin{split}
	\langle y,P_M(x)z\rangle
	&=
	\big\langle M^{-1/2} My,M^{-1/2}P_M(x)z\big\rangle
	=
	\big\langle M^{-1/2} My,M^{-1/2}z\big\rangle
	=
	\langle y,z\rangle,
	\\
	\langle y,\nabla g(x)z\rangle
	&=
	\langle Dg(x)y,z\rangle
	=
	\langle 0,z\rangle=0
	\end{split}
	\end{align}
	Now \eqref{eq:normestimate_app} follows by taking $\bE(\sup_{t\in[0,T]}|\ldots|^{p/2})$ on both sides of \eqref{eq:Ito_app} and using \eqref{eq:innerproducts_app}, the Burkholder-Davis-Gundy inequality, Jensen's inequality, the growth conditions on
	 $a$ and $B$ from Assumption~\ref{ass:aB} as well as Gronwall's lemma. 
\end{proof}

\section{Globalized implicit function theorem}\label{sec:appGIFT}
As usual for differential-algebraic systems, the analysis of our numerical scheme is based on suitable local and global versions of the implicit function theorem (IFT). 
We first recall a version of the classical local IFT which does not assume differentiability in all variables. It is needed in Step 6 of the proof of Theorem~\ref{theo:mainonestep}. The proof is standard and therefore omitted.

\begin{theorem}\label{theo:localIFT}
Let $U_1\subset\bR^k$, $U_2\subset \bR^m$ be open sets, $(x_0,y_0)\in U_1\times U_2$, 
and let $F\colon U_1\times U_2\to \mathbb R^m\colon(x,y)\mapsto F(x,y)$ be continuous 
such that the partial derivative $D_yF(x,y)\in\bR^{m\times m}$ exists for all $(x,y)\in U_1\times U_2$ and the mapping $(x,y)\mapsto D_yF(x,y)$ is continuous.
Assume that $D_y F(x_0,y_0)\in\bR^{m\times m}$ is invertible.
Then there exists an open neighborhood $V_1\subset U_1$ of $x_0$,  
an open neighborhood $V_2\subset U_2$ of $y_0$,
and a continuous function $\gamma\colon V_1\to V_2\colon x\mapsto \gamma(x)$ such that $\gamma(x_0)=y_0$ and 
	\begin{align*}
		\forall\, (x,y)\in V_1\times V_2:\,F(x,y)=F(x_0,y_0)\,\Longleftrightarrow\,y=\gamma(x).
	\end{align*}
Further, $V_1$ can be chosen in such a way  that
$D_yF(x,\gamma(x))$ is inverible for all $x\in V_1$ and the following assertion holds: 
If $F$ is continuously differentiable, then
$\gamma$ is continuously differentiable with derivative
	\begin{align}\label{eq:Dgamma}
		D\gamma(x)=-\big(D_y F(x,\gamma(x))\big)^{-1} D_x F(x,\gamma(x)).
	\end{align}
\end{theorem}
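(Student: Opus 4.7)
The plan is to apply the Banach fixed point theorem to a suitable parameter-dependent contraction, since the standard proof via the inverse function theorem requires differentiability in all variables, which we do not have here.

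First I would reduce to the case $F(x_0,y_0)=0$ by replacing $F$ with $F-F(x_0,y_0)$, and set $A:=D_yF(x_0,y_0)\in\bR^{m\times m}$, which is invertible by assumption. I then define the auxiliary map
\begin{equation*}
\Phi\colon U_1\times U_2\to\bR^m,\qquad \Phi(x,y):=y-A^{-1}F(x,y),
\end{equation*}
whose fixed points in the second argument coincide with the zeros of $F(x,\cdot)$. Differentiating in $y$ gives $D_y\Phi(x,y)=\Id-A^{-1}D_yF(x,y)$, so in particular $D_y\Phi(x_0,y_0)=0$. By continuity of $D_yF$ I can pick $r>0$ small enough that $\bar B_r(y_0)\subset U_2$ and $\|D_y\Phi(x,y)\|\leq \tfrac12$ on some open neighborhood $W\times \bar B_r(y_0)\subset U_1\times U_2$ of $(x_0,y_0)$. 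By the mean value inequality applied in the $y$-variable, $\Phi(x,\cdot)$ is then a $\tfrac12$-Lipschitz contraction on $\bar B_r(y_0)$ for every $x\in W$.

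Next I would shrink the $x$-neighborhood to ensure the self-map property. Since $\Phi(x,y_0)-y_0=-A^{-1}F(x,y_0)$ and $F$ is continuous in $x$ with $F(x_0,y_0)=0$, I can find an open neighborhood $V_1\subset W$ of $x_0$ such that $\|\Phi(x,y_0)-y_0\|\leq r/2$ for all $x\in V_1$. The triangle inequality then gives $\Phi(x,\bar B_r(y_0))\subset \bar B_r(y_0)$ for $x\in V_1$, so the Banach fixed point theorem yields a unique fixed point $\gamma(x)\in\bar B_r(y_0)$; taking $V_2:=B_r(y_0)$ and noting that the contraction estimate forces fixed points into the open ball if $V_1$ is shrunk slightly once more, I obtain the required equivalence $F(x,y)=0\Leftrightarrow y=\gamma(x)$ on $V_1\times V_2$. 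Continuity of $\gamma$ follows by the standard parameter-dependent fixed point trick: writing $\gamma(x)-\gamma(x')=\Phi(x,\gamma(x))-\Phi(x',\gamma(x'))$, splitting via $\Phi(x',\gamma(x))$ and absorbing the $\tfrac12$-contraction term yields
\begin{equation*}
\|\gamma(x)-\gamma(x')\|\leq 2\|A^{-1}\|\,\|F(x,\gamma(x))-F(x',\gamma(x))\|,
\end{equation*}
which tends to $0$ as $x'\to x$ by continuity of $F$.

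For the final two assertions: invertibility of $D_yF(x,\gamma(x))$ on a possibly smaller $V_1$ is automatic, since $\gamma$ is continuous with $\gamma(x_0)=y_0$, $D_yF$ is continuous, and the set of invertible matrices is open. If $F$ is additionally $C^1$, I would verify differentiability of $\gamma$ directly from the already established continuity: writing
\begin{equation*}
0=F(x+h,\gamma(x+h))-F(x,\gamma(x)),
\end{equation*}
expanding both $x$- and $y$-increments to first order using the total differentiability of $F$, and inverting $D_yF(x,\gamma(x))$ yields the formula \eqref{eq:Dgamma}; continuity of $D\gamma$ then follows from the continuity of all ingredients. The main subtlety is really the first part, namely that one cannot invoke the classical IFT (which would require $F$ to be $C^1$); the Banach fixed point route circumvents this by exploiting only continuity in $x$ together with the continuity of $D_yF$, which is exactly what is assumed.
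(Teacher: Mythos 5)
Your proof is correct, and it is precisely the "standard" argument the paper alludes to when it omits the proof of Theorem~\ref{theo:localIFT} with the remark that it is standard: the Banach fixed point theorem applied to $\Phi(x,y)=y-A^{-1}F(x,y)$ is the classical route for this version of the IFT, which assumes only continuity of $F$ and of $D_yF$ rather than joint differentiability. All the delicate points (forcing the fixed point into the open ball by shrinking $V_1$, continuity of $\gamma$ via the parameter-dependent fixed point estimate, and openness of the set of invertible matrices for the last assertions) are handled appropriately.
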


Next we state a suitably globalized version of the IFT which is used in Step~2 and Step~3 of the proof of Theorem~\ref{theo:mainonestep}.  
For the sake of completeness we also present a short proof.
Recall that for $y\in\bR^m$ and $r\in(0,\infty)$ we denote by $B_r(y)$ the open ball in $\bR^m$ with radius $r$ and center $y$.

\begin{theorem}\label{theo:globalIFT}
	Let  $U_1\subset\bR$, $U_2\subset \bR^m$ be open sets, $(x_0,y_0)\in U_1\times U_2$, and let $C\in(0,\infty)$ be such that $[x_0,x_0+1]\times B_{2C}(y_0)\subset U_1\times U_2$. Let $F\colon U_1\times U_2 \to \bR^m$ be continuously differentiable and assume that, for all $(x,y)\in[x_0,x_0+1]\times B_{2C}(y_0)$ such that $F(x,y)=F(x_0,y_0)$, the Jacobian matrix $D_yF(x,y)\in\bR^{m\times m}$ is invertible and 
	\begin{align}\label{eq:gIFT_1}
		\big\|\big(D_yF(x,y)\big)^{-1}D_xF(x,y)\big\|< C.
	\end{align}
	Then there exists a unique continuously differentiable function $\gamma\colon[x_0,x_0+1]\to B_{C}(y_0)$ satisfying $\gamma(x_0)=y_0$ and
	\begin{align*}
		\forall\,x\in[x_0,x_0+1]:\,F(x,\gamma(x))=F(x_0,y_0).
	\end{align*}
	The derivative of $\gamma$ is given by \eqref{eq:Dgamma}.
	Moreover, if $F(x_0,y)\neq F(x_0,y_0)$ for all $y\in B_{2C}(y_0)\setminus \{y_0\}$, then we have
	\begin{align*}
		\forall\,(x,y)\in[x_0,x_0+1]\times B_{C}(y_0):\, F(x,y)=F(x_0,y_0)\,\Longleftrightarrow\, y=\gamma(x).
	\end{align*}
\end{theorem}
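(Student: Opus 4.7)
The plan is to prove Theorem~\ref{theo:globalIFT} by a maximal continuation argument driven by Theorem~\ref{theo:localIFT}, where the strict inequality in the hypothesis is upgraded via compactness to a uniform bound strictly below $C$ on the relevant compact pieces of the level set. Any $C^1$ curve $\gamma$ with $F(x,\gamma(x))=F(x_0,y_0)$ automatically satisfies $\gamma'(x)=-(D_yF(x,\gamma(x)))^{-1}D_xF(x,\gamma(x))$ by \eqref{eq:Dgamma}, so the bound on $(D_yF)^{-1}D_xF$ directly controls the speed of the curve along the level set and is the mechanism that prevents $\gamma$ from escaping $B_C(y_0)$.

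For existence, I would define $T\in[x_0,x_0+1]$ as the supremum of those $t$ for which a $C^1$ extension $\gamma\colon[x_0,t]\to B_C(y_0)$ of the local solution given by Theorem~\ref{theo:localIFT} at $(x_0,y_0)$ exists; then $T>x_0$. The crucial step is that the compact set $K=\{(x,y)\in[x_0,x_0+1]\times\overline{B_C(y_0)}:F(x,y)=F(x_0,y_0)\}$ lies inside $[x_0,x_0+1]\times B_{2C}(y_0)$, where the hypothesis applies, so the continuous map $(x,y)\mapsto\|(D_yF(x,y))^{-1}D_xF(x,y)\|$ attains on $K$ a maximum $C'<C$. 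Any partial extension on $[x_0,t)$ with $t\leq T$ then satisfies $\|\gamma'\|\leq C'$ pointwise, hence $\|\gamma(x)-y_0\|\leq C'(x-x_0)\leq C'<C$, so it extends Lipschitz-continuously to $t$ with $\gamma(t)\in B_{C'}(y_0)\subset B_C(y_0)$. If $T<x_0+1$, Theorem~\ref{theo:localIFT} at $(T,\gamma(T))$ would yield a further extension contradicting maximality, so $T=x_0+1$. For the first uniqueness, if $\tilde\gamma$ is another $C^1$ solution into $B_C(y_0)$ with $\tilde\gamma(x_0)=y_0$, then $A=\{x:\gamma(x)=\tilde\gamma(x)\}$ is closed, contains $x_0$, and is relatively open (at each $x\in A$ the common value lies in $B_C(y_0)\subset B_{2C}(y_0)$, so Theorem~\ref{theo:localIFT} forces agreement in a neighborhood); hence $A=[x_0,x_0+1]$.

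For the stronger uniqueness under the extra hypothesis at $x_0$, given $(x_1,y_1)\in[x_0,x_0+1]\times B_C(y_0)$ with $F(x_1,y_1)=F(x_0,y_0)$, I would apply Theorem~\ref{theo:localIFT} at $(x_1,y_1)$ to get a local $C^1$ curve $\tilde\gamma$ through $(x_1,y_1)$ and extend it backward maximally within $[x_0,x_0+1]$ while keeping values in $B_{2C}(y_0)$. The \emph{main obstacle} is ensuring that the backward extension reaches $x_0$ without leaving $B_{2C}(y_0)$. For this, choose $\delta>0$ with $\|y_1-y_0\|<C-\delta$ and apply the compactness upgrade to the set $K''=\{(x,y)\in[x_0,x_0+1]\times\overline{B_{2C-\delta}(y_0)}:F(x,y)=F(x_0,y_0)\}\subset[x_0,x_0+1]\times B_{2C}(y_0)$, obtaining $\|(D_yF)^{-1}D_xF\|\leq C''<C$ on $K''$. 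Then, as long as $\tilde\gamma$ takes values in $\overline{B_{2C-\delta}(y_0)}$, one has $\|\tilde\gamma(x)-y_0\|\leq C''(x_1-x)+\|y_1-y_0\|<C+(C-\delta)=2C-\delta$, so $\tilde\gamma$ stays strictly inside $B_{2C-\delta}(y_0)$; combined with a standard continuation argument based on Theorem~\ref{theo:localIFT}, this forces the maximal extension to reach $x_0$ with $\tilde\gamma(x_0)\in B_{2C}(y_0)$ and $F(x_0,\tilde\gamma(x_0))=F(x_0,y_0)$. The extra hypothesis then yields $\tilde\gamma(x_0)=y_0$, and the same closed-open argument as in the first uniqueness claim (applied on $[x_0,x_1]$, using that common values of $\gamma$ and $\tilde\gamma$ automatically lie in $B_{2C}(y_0)$) gives $\tilde\gamma=\gamma$ there, so $y_1=\gamma(x_1)$.
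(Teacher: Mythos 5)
Your proof is correct and follows essentially the same route as the paper: a maximal continuation argument based on Theorem~\ref{theo:localIFT}, with the bound \eqref{eq:gIFT_1} on $(D_yF)^{-1}D_xF$ controlling the displacement of the implicit curve so that the forward solution stays in $B_C(y_0)$ and the backward solution from $(x_1,y_1)$ stays in $B_{2C}(y_0)$, after which the extra hypothesis at $x_0$ forces $\tilde\gamma(x_0)=y_0$. The only cosmetic difference is your compactness upgrade of the strict pointwise bound to a uniform $C'<C$ on the level set; the paper gets the same displacement estimate directly by integrating the pointwise bound $\|D\gamma\|<C$ over an interval of length at most one.
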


\begin{proof}[Proof of Theorem~\ref{theo:globalIFT}]
	W.l.o.g.~ we can assume that $x_0=0$, $y_0=0$ and $F(x_0,y_0)=0$, else look at the function $\tilde F(x,y)=F(x-x_0,y-y_0)-F(x_0,y_0)$.
	
Let $I$ be the set consisting of all $h\in(0,1]$ such that there exists a unique continuous function $\gamma_h\colon[0,h]\to B_{2C}(0)$ with the property that $\gamma_h(0)=0$ and $F(x,\gamma_h(x))=0$ for all $x\in[0,h]$. Clearly, $I$ is non-empty as a consequence of the local IFT, Theorem~\ref{theo:localIFT}. Let $h^*:=\sup I$. Due to the uniqueness of the functions $\gamma_h$, $h\in I$, we can define a continuous function $\gamma\colon [0,h^*)\to B_{2C}(0)$ by setting $\gamma(x):=\gamma_h(x)$ for $x\in [0,h]$.
Using again the local IFT, we know that $\gamma$ is continuously differentiable. 
Its derivative is given by \eqref{eq:Dgamma} and hence bounded due to \eqref{eq:gIFT_1}. Consequently, there exists a continuous extension $\gamma\colon[0,h^*]\to B_{2C}(0)$ of $\gamma$ to the closed interval $[0,h^*]$. 
By the continuity of $F$, we have $F(h^*,\gamma(h^*))=0$. 
Next, observe that $h^*=1$. Indeed, if $h^*$ was strictly smaller than one, we would obtain a contradiction to the definition of $h^*$ by applying the local IFT at the point $(h^*,\gamma(h^*))$.
A further application of the local IFT and \eqref{eq:gIFT_1} show that $\gamma$ is an element of $\mathcal C^1([0,1],B_C(0))$, with derivative $D\gamma(x)$ given by \eqref{eq:Dgamma} for all $x\in[0,1]$. Moreover, $\gamma$ is the only continuous function from $[0,1]$ to $B_{2C}(0)$ satifying $\gamma(0)=0$ and $F(x,\gamma(x))=0$ for all $x\in[0,1]$.
Indeed, if there was another such function $\tilde\gamma$, we would obtain a contradiction to 
the local IFT by considering the point $\sup\{x\in[0,1]:\gamma(x)=\tilde\gamma(x)\}\in[0,1)$.

In order to verify the last assertion, consider $(x_1,y_1)\in[0,1]\times B_C(0)$ such that $F(x_1,y_1)=0$. 
We have to show that $y_1=\gamma(x_1)$. W.l.o.g.~we can assume that $x_1\neq0$.
Let $\tilde I$ be the set consisting of all $h\in[0,x_1)$ such that there exists a unique continuous function $\tilde\gamma_h\colon[h,x_1]\to B_{2C}(0)$ with the property that $\tilde\gamma_h(x_1)=y_1$ and $F(x,\tilde\gamma_h(x))=0$ for all $x\in[h,x_1]$. Define $h_*:=\inf\tilde I$.	
Arguing as above, we see that $h_*=0$ and there exists a unique continuously differentiable function $\tilde\gamma\colon[0,x_1]\to B_{2C}(0)$ such that $\tilde\gamma(h_1)=y_1$ and $F(x,\tilde\gamma(x))=0$ for all $x\in[0,x_1]$. The additional assumption on $F$ yields that $\tilde\gamma(0)=\gamma(0)=0$. Hence, 
a further application of
the local IFT implies that $\gamma$ and $\tilde\gamma$ coincide on $[0,x_1]$, so that $\gamma(x_1)=\tilde\gamma(x_1)=y_1$.	
\end{proof}
\end{appendix}

\bibliographystyle{siam}
\bibliography{literatur}

\bigskip

\bigskip

\noindent{\bf Acknowledgement.} Helpful scientific discussions with Raimund Wegener at the Fraunhofer Institute for Industrial Mathematics ITWM are gratefully acknowledged.

\bigskip

\end{document}